\documentclass[12pt]{article}
\usepackage{amsthm,amsfonts,amssymb,amsmath,enumerate}

\usepackage{url}
\usepackage{xcolor,tikz}
\usetikzlibrary{matrix}
\usepackage{multicol}
\usepackage{enumitem}
\usepackage{authblk,fullpage}

\numberwithin{equation}{section}

\newtheorem{thm}[equation]{Theorem}

\newtheorem{lem}[equation]{Lemma}

\theoremstyle{definition}

\newtheorem{const}[equation]{Construction}

\theoremstyle{remark}

\title{Three-Neighbour Bootstrap Percolation in Thin Three-Dimensional Grids}

\author{Will Dolphin} 
\affilsep=9pt
\affil{\normalsize{Pacific School of Innovation and Inquiry, Victoria, B.C., Canada.}}

\author{Peter J. Dukes\thanks{Research supported by NSERC Discovery Grant RGPIN-312595-2023.} }
\affilsep=9pt
\affil{\normalsize{Department of Mathematics and Statistics, University of Victoria, Victoria, B.C., Canada.}}
\affil{\texttt{dukes@uvic.ca}}

\begin{document}

\maketitle

\begin{abstract}
We improve the status of the problem of determining minimum-sized percolating sets in $a \times b \times c$ grids under the $3$-neighbour process.  Using several new constructions, we show that optimal percolating sets exist whenever $\min(a,b,c) \ge 7$.  As an important step toward this, we also show that all grids with $\min(a,b,c) \ge 4$ have a percolating set whose size exactly achieves the lower bound $(ab+ac+bc)/3$ whenever this value is an integer.
\end{abstract}

\section{Introduction}

Let $G$ be a graph and $r$ a positive integer.  The \emph{$r$-neighbour bootstrap percolation process} on $G$ begins with an initial set $A_0 \subseteq V(G)$ of `infected' vertices; at each time step, a new vertex becomes infected if it has at least $r$ infected neighbours.  That is, for $t\geq1$, the set of vertices infected at time step $t$ is $$A_t:=A_{t-1}\cup\{v\in V(G): |N_G(v)\cap A_{t-1}|\geq r\}.$$
We say that $A_0$ \emph{percolates} under the $r$-neighbour process on $G$ if $\cup_{t=0}^\infty A_t=V(G)$.  An early application of bootstrap percolation was in modelling the dynamics of ferromagnetism~\cite{ChalupaLeathReich79}.  

For finite graphs, it is natural to ask for the smallest size of an initial set which percolates.  This paper builds on some recent work on the case $r=3$ for a special class of graphs.

For positive integers $a_1,\dots,a_d$, the $d$-\emph{dimensional grid graph} with side lengths $a_1,\dots,a_d$ has vertex set $[a_1] \times \dots \times [a_d]$, with
two vertices $u=(u_1,\dots,u_d)$ and $v=(v_1,\dots,v_d)$ adjacent if and only if, for some  $i\in [d]$ we have $|u_i-v_i|=1$ and $u_j=v_j$ for all $j\neq i$.  Alternatively, this grid is the Cartesian product of paths $P_{a_1} \Box \dots \Box P_{a_d}$.
A focal point of research on bootstrap percolation assumes $G$ is a grid graph; see for instance
\cite{Benevides+21+,DukesNoelRomer23,GravnerHolroydSivakoff21,HedzetHenning25,PrzykuckiShelton20} and the references therein.  The first two of these references make significant progress on the extremal problem with $r=d=3$; in other words, they investigated the smallest size of a percolating set under the $3$-neighbour process on $3$-dimensional grids.  In particular, this question was answered in \cite{DukesNoelRomer23} for $a \times b \times c$ grids whenever $\min(a,b,c) \ge 11$.  One of our main results, Theorem~\ref{thm:opt7}, is to extend this to $\min(a,b,c) \ge 7$.

It turns out (see Lemma~\ref{lem:lower-bound}) that an initial set $A_0$ which percolates on the $a \times b \times c$ grid must satisfy the lower bound $|A_0| \ge (ab+ac+bc)/3$.  When equality holds, the right side is an integer and the
$3$-neighbour process always infects each new vertex in $A_t$ with exactly three neighbours in $A_{t-1}$.  We say that a grid is \emph{perfect} if it admits a percolating set $A_0$ exactly meeting this condition. 

For convenience, we abbreviate the grid graph $P_a \Box P_b \Box P_c$ with the ordered triple $(a,b,c)$ and say that it has \emph{thickness} equal to $\min(a,b,c)$. 
It was shown in \cite{Benevides+21+} that grids $(1,2^k-1,2^k-1)$ are perfect under the $3$-neighbour process for each positive integer $k$.  Later, it was shown in \cite{DukesNoelRomer23} that these are the only perfect grids of thickness $1$.  Of course, grids of thickness $1$ are equivalent to $2$-dimensional grids, so the problem is fully settled for $d=2$.

Using various nicely structured and computer generated initial sets $A_0$, it was also shown in \cite{DukesNoelRomer23} that all grids $(a,b,c)$ with $3 \mid ab+ac+bc$ and thickness at least $5$ are perfect.  Here, we reduce this minimum thickness to $4$.  This is helpful for our above-mentioned result answering the overall extremal problem for thickness at least $7$.

In the next section, we give the lower bound on $|A_0|$ for $r=d=3$.  Proofs also appear in earlier work on the topic, but we sketch a proof here for completeness.  We also review a useful recursive construction from \cite{DukesNoelRomer23} that assembles a minimum-sized percolating set in terms of those for four smaller grids.  In Section~\ref{sec:perf24}, we give several new constructions of perfect grids having thickness $2$ or $4$.  Some of these are infinite families and analyzed in more detail; illustrations showing the percolation process are also given in the appendix.  In Sections~\ref{sec:perf24} and \ref{sec:opt}, we feed these into the recursive construction to prove our main results.  We conclude in Section~\ref{sec:concl} with a summary of the status of the problem.

\section{Preliminaries}
\label{sec:prelims}

The following lower bound on $3$-neighbour percolating sets in $3$-dimensional grids serves as our starting point and benchmark for our constructions to follow.  A proof can be found in the references, but we provide a sketch here for completeness.

\begin{lem}[Surface area lower bound]
\label{lem:lower-bound}
Suppose $A_0$ is a subset of the grid $(a,b,c)$ that percolates under the $3$-neighbour process. Then 
\begin{equation}
\label{eq:lower-bound}
|A_0| \ge  \frac{ab+ac+bc}{3}.
\end{equation}
\end{lem}

\begin{proof}
Let $G$ denote the grid and consider a set $A \subseteq V(G)$. For $x \in A$, let $n(A,x):=|N_G(x) \cap A|$, and define $n(A):=\sum_{x \in A} n(A,x)$.

Now, suppose $A_0,A_1,\dots,A_t=V(G)$ is a sequence of steps in the $3$-neighbour process on $G$.  The key observation is that the quantity $6|A_i|-n(A_i)$ is nonincreasing over time steps $i=0,1,\dots,t$. This is because each newly infected vertex $x \in A_{i+1}\setminus A_i$ adds six but subtracts two for every neighbour of $x$ in $A_i$.

Suppose $\min(a,b,c) \ge 2$.  There are
$(a-2)(b-2)(c-2)$ vertices of degree $6$, $2(a-2)(b-2)+2(a-2)(c-2)+2(b-2)(c-2)$ vertices of degree $5$, $4(a-2)+4(b-2)+4(c-2)$ vertices of degree $4$, and $8$ vertices of degree $3$. So, after simplification,
\begin{equation}
\label{eq:nG}
n(G)=6abc-2ab-2ac-2bc.    
\end{equation}
Suppose, on the other hand, that $\min(a,b,c)=1$.  If $a=1$ and $b,c \ge 2$, there are $(b-2)(c-2)$ vertices of degree $4$, $2(b-2)+2(c-2)$ vertices of degree $3$, and $4$ vertices of degree $2$.  This gives $n(G)=4bc-2b-2c$, agreeing with \eqref{eq:nG}.  The case $a=b=1$ ($G=P_c$) is trivial, with $n(G)=2c$.  This also matches \eqref{eq:nG}.

It follows that 
$$6|A_0| \ge 6|A_0|-n(A_0) \ge 6|A_t|-n(A_t) = 2ab+2ac+2bc,$$
which implies the bound \eqref{eq:lower-bound}.
\end{proof}

A grid $(a,b,c)$ is \emph{perfect} if it has a percolating set $A_0$ achieving the lower bound in \eqref{eq:lower-bound}; it is \emph{optimal} if it has a percolating set $A_0$ of size equal to the ceiling of the fraction on the right of \eqref{eq:lower-bound}.

It is easy to see that a grid $(a,b,c)$ can be perfect only if either two of the integers $a,b,c$ are divisible by three, or all three of $a,b,c$ are in the same congruence class (mod $3$). Further details, including a more general lower bound for higher-dimensional grids, can be found in \cite{DukesNoelRomer23}.

Next is an important recursive construction used in earlier work, and also useful for our main results to follow.

\begin{lem}[\cite{DukesNoelRomer23}]
\label{lem:recursive}
Let $a_i,b_i,c_i$ be positive integers for $i\in \{1,2\}$.  Suppose that each of the grids $(a_2,b_2,c_1)$, $(a_2,b_1,c_2)$ and $(a_1,b_2,c_2)$ is perfect.
\begin{enumerate}[label={\rm (\alph*)}]
\item If $(a_1,b_1,c_1)$ is perfect, then so is $(a_1+a_2,b_1+b_2,c_1+c_2)$.
\item If $(a_1,b_1,c_1)$ is optimal, then so is $(a_1+a_2,b_1+b_2,c_1+c_2)$.
\end{enumerate}
\end{lem}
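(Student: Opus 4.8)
The plan is to realize the large grid $(a_1+a_2,\,b_1+b_2,\,c_1+c_2)$ as a union of eight axis-aligned subboxes (``octants'') by splitting each coordinate interval $[a_1+a_2]$ into $[a_1]$ and a translate of $[a_2]$, and likewise in the other two coordinates. Indexing an octant by $(i,j,k)\in\{1,2\}^3$, it is a copy of the grid $(a_i,b_j,c_k)$. The four octants with $i+j+k$ even are $(1,1,1)$, $(2,2,1)$, $(2,1,2)$, $(1,2,2)$; the latter three are exactly the grids hypothesized to be perfect, while $(a_1,b_1,c_1)$ is perfect in case (a) and optimal in case (b). First I would place in each of these four ``even'' octants a percolating set witnessing its perfectness (or, for $(a_1,b_1,c_1)$ in case (b), its optimality), and take $A_0$ to be the union of these four sets.

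Next I would argue that $A_0$ percolates the whole grid. Since every edge of an isolated octant grid is also an edge of the large grid, monotonicity of the process guarantees that each even octant fills completely, exactly as it does in isolation: infected vertices outside an octant can only accelerate its percolation. So after finitely many steps all four even octants are fully infected. In the cube graph on $\{1,2\}^3$ (adjacency being difference in a single coordinate), each ``odd'' octant is adjacent to precisely three even octants, differing from each in a distinct coordinate; hence the three even neighbours sit across three boundary faces of the odd octant lying in three distinct coordinate directions, and these faces are mutually adjacent, meeting at a common corner.

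The technical heart of the argument---and the step I expect to require the most care---is a ``three-faces'' filling lemma: a box $(p,q,r)$ whose three mutually adjacent boundary faces each have all their exterior neighbours infected will itself fully percolate under the $3$-neighbour process. I would prove this by induction on the potential $\phi(u,v,w)=(u-1)+(q-v)+(r-w)$ on the box with coordinates $u\in[p],v\in[q],w\in[r]$, where the three supported faces are $u=1$, $v=q$, $w=r$. This $\phi$ vanishes at the shared corner $(1,q,r)$, which has three infected exterior neighbours and so becomes infected; and any vertex with $\phi=k\ge 1$ has, in each of the three coordinate directions that decrease $\phi$, a neighbour that is either an already-infected interior vertex (of smaller $\phi$) or an infected exterior vertex across a supported face. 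Thus every such vertex acquires three infected neighbours and becomes infected, so the box fills. Applying this lemma to each of the four odd octants, whose three supported faces are furnished by the now-filled even octants, completes the percolation.

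Finally I would verify the cardinality of $A_0$. Summing the surface-area values $(a_ib_j+a_ic_k+b_jc_k)/3$ over the four even octants, each mixed product regroups, so the $ab$-, $ac$-, and $bc$-type contributions collect into $(a_1+a_2)(b_1+b_2)$, $(a_1+a_2)(c_1+c_2)$, and $(b_1+b_2)(c_1+c_2)$, each occurring exactly once. In case (a) this gives $|A_0|=\tfrac13\big[(a_1{+}a_2)(b_1{+}b_2)+(a_1{+}a_2)(c_1{+}c_2)+(b_1{+}b_2)(c_1{+}c_2)\big]$, matching Lemma~\ref{lem:lower-bound}, so the large grid is perfect. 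In case (b), the three perfect octants each contribute a multiple of $3$, so with $S=a_1b_1+a_1c_1+b_1c_1$ the total regrouped sum is congruent to $S$ modulo $3$; pulling the ceiling through the remaining exact thirds yields $|A_0|=\lceil S/3\rceil+(\text{a multiple of }3)=\big\lceil\tfrac13\big((a_1{+}a_2)(b_1{+}b_2)+(a_1{+}a_2)(c_1{+}c_2)+(b_1{+}b_2)(c_1{+}c_2)\big)\big\rceil$, so the large grid is optimal.
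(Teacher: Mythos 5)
Your proof is correct. One caveat about the comparison: this paper does not actually prove Lemma~\ref{lem:recursive} --- it is quoted from \cite{DukesNoelRomer23} without proof --- so there is no in-paper argument to measure against; what you have written is a complete, self-contained proof of the cited result, and it follows the natural (and, to my knowledge, essentially the original) route: split the large grid into eight octants, seed the four octants $(a_1,b_1,c_1)$, $(a_2,b_2,c_1)$, $(a_2,b_1,c_2)$, $(a_1,b_2,c_2)$, invoke monotonicity of the $3$-neighbour process to fill them, then fill each remaining octant by your ``three-faces'' lemma, whose potential-function sweep from the common corner $(1,q,r)$ is airtight (each of the three $\phi$-decreasing directions yields a neighbour that is either interior with smaller $\phi$ or exterior across a supported face). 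The counting in both cases (a) and (b) is also right, since the regrouped surface sums factor as $(a_1+a_2)(b_1+b_2)$ etc., and in case (b) the three perfect octants add multiples of $3$ to the numerator, so the ceiling passes through. Two trivial blemishes: your parity label is off (the four seeded octants have an \emph{even number of indices equal to} $2$, i.e.\ odd coordinate sum, not ``$i+j+k$ even''), though the explicit list is the correct one; and in case (b) the phrase ``each contribute a multiple of $3$'' should refer to their surface sums $a_ib_j+a_ic_k+b_jc_k$, not to the sizes of their percolating sets.
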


\section{Perfect grids of thickness 2 and 4}
\label{sec:perf24}

Recall that the grid $(1,b,c)$ is perfect only if $b=c=2^k-1$ for some positive integer $k$.  Therefore, it is of limited use to apply Lemma~\ref{lem:recursive}(a) with grids of thickness one as ingredients.  In more detail, if (say) $a_1=1$, then the construction would require $b_1=c_1$ and $b_2=c_2$.  This would result in perfect grids $(a,b,c)$ only when $b=c$, and indeed only for a sparse set of such values.

The construction becomes more versatile using grids of thickness $2$ as ingredients.
In \cite{DukesNoelRomer23}, it was shown that grids of the form $(2,b,c)$ are perfect if $b,c \ge 3$ have different parity and $3 \mid 2(b+c)+bc$.  This is summarized below for later reference.

\begin{lem}[{\rm \cite{DukesNoelRomer23}}]
\label{lem:thickness2grids}
The grid $(2,b,c)$ is perfect if $b,c \ge 3$, $b \equiv c \equiv 0$ or $2 \pmod{3}$, and $b\equiv c+3 \pmod{6}$.
\end{lem}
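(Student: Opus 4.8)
The plan is to produce an explicit initial set $A_0\subseteq [2]\times[b]\times[c]$, verify by direct counting that $|A_0|=(2b+2c+bc)/3$, and then show that $A_0$ percolates. By Lemma~\ref{lem:lower-bound} together with the observation recorded after it (that when the bound is met with equality, every newly infected vertex acquires exactly three infected neighbours), any percolating set whose size equals the lower bound is automatically perfect. So these two verifications suffice, and in particular I will not track the ``exactly three neighbours'' condition separately. I would also note at the outset that the recursive construction of Lemma~\ref{lem:recursive} is of no help here: building a grid of thickness $2$ forces $a_1=a_2=1$, so all three auxiliary grids have a coordinate equal to $1$, hence thickness $1$, and must be of the form $(1,2^k-1,2^k-1)$. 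Chaining the resulting equalities forces $b=c$ (and only for a sparse set of values), so a direct construction is unavoidable.

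Before designing the pattern I would unwind the congruence hypotheses. The condition $b\equiv c\equiv 0$ or $2\pmod 3$ guarantees $3\mid 2(b+c)+bc$, so the target size is an integer, while $b\equiv c+3\pmod 6$ forces $b$ and $c$ to have opposite parity. Together these say, after possibly swapping $b$ and $c$, that $b$ is even, $c$ is odd, and $b\equiv c\pmod 3$. The even side is the natural one to process in pairs of columns, and this asymmetry is what I would build the construction around.

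For the pattern itself I would use a sparse, essentially periodic \emph{diagonal} configuration. The governing local mechanism is that a vertex $(i,j,k)$ becomes infected once its layer-partner $(3-i,j,k)$ together with two of its in-layer lateral neighbours are infected; equivalently, as soon as one layer is completely infected, the other evolves under the ordinary $2$-neighbour process on the $b\times c$ grid. The construction therefore seeds the two layers so that infection ignites in a corner and advances as a diagonal wave across the $(j,k)$-plane, the two layers filling in a coupled, staggered fashion rather than one-then-the-other. The asymptotic density of such a configuration is $1/6$, matching $\tfrac16+\tfrac1{3b}+\tfrac1{3c}$, and I would fix the seeds along the three ``low'' faces to supply the $2(b+c)/3$ boundary correction, computing $|A_0|$ as a periodic bulk contribution plus an explicit boundary contribution.

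The main obstacle, and the step that actually consumes the hypotheses, is verifying percolation at the boundary. In the bulk the diagonal wave is self-sustaining, but the six faces, twelve edges, and eight corners have reduced degree, so I must check that the cascade ignites correctly at the starting corner, remains self-sustaining as the front strikes each far face, and terminates without leaving an un-infectable residue. The parity requirement ($b$ even, $c$ odd) is exactly what makes the staggered wave register correctly against the opposite faces, while the mod-$3$ condition is what makes the diagonal pattern close up flush with the boundary; if either fails, the front either stalls one step short or overshoots, breaking either percolation or the exact count. I would organize this as a case analysis on the position of the advancing front relative to the faces, supported by a figure tracking a few successive time steps, confirming in each regime that every newly infected vertex has the requisite three infected neighbours.
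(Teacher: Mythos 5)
There is a genuine gap, and it is worth stating plainly what this paper actually does with this statement: Lemma~\ref{lem:thickness2grids} is not proved here at all --- it is imported verbatim from \cite{DukesNoelRomer23}, and the surrounding text only records the equivalence between the stated congruences and the conditions ``$b,c$ of different parity and $3\mid 2(b+c)+bc$.'' So your attempt is really a reconstruction of the proof in the cited reference, and it should be judged as such: as a self-contained proof of a two-parameter family of perfect grids.

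Judged that way, the proposal is a strategy outline, not a proof. Your framing is sound --- it is correct that a percolating set of size $(2b+2c+bc)/3$ makes the grid perfect by definition, that the recursive construction cannot produce thickness-$2$ grids except when $b=c$, that the hypotheses amount (after swapping) to $b$ even, $c$ odd, $b\equiv c\pmod 3$, and your density bookkeeping $\tfrac{2b+2c+bc}{6bc}=\tfrac16+\tfrac1{3b}+\tfrac1{3c}$ is right. But every piece of actual mathematical content is deferred: the initial set is never written down (no coordinates, no periodic block, no boundary columns), so neither the cardinality count nor the percolation can be checked, and the crucial claims --- that the parity condition ``is exactly what makes the staggered wave register correctly'' and the mod-$3$ condition ``is what makes the diagonal pattern close up flush'' --- are assertions of the desired conclusion rather than arguments. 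The appendix of this very paper (Constructions~\ref{const:25c}, \ref{const:26c}, \ref{const:28c}) shows what completing your plan requires even for \emph{single}-parameter thickness-$2$ families: an explicit periodic pattern with a $6$-column repeating block contributing $2(a+b)$ cells, explicit boundary columns, and a step-by-step table verifying ignition, propagation, and termination of the infection front; moreover, distinct residues of $c$ there need separate constructions, which is evidence against the implicit assumption that one uniform ``diagonal wave'' handles all $(b,c)$ in your two-parameter family at once. Until the pattern is exhibited and the front-versus-boundary case analysis is actually carried out, the proof does not exist; what you have is a plausible (and, in spirit, correct) description of the kind of construction that \cite{DukesNoelRomer23} supplies.
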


To supplement these constructions, the next lemma gives some perfect constructions for thickness 2 which do not have the parity restriction on $b,c$.

\begin{lem}
\label{lem:thickness2strips}
The following grids are perfect:
\begin{enumerate}[label={\rm (\alph*)}]
\item $(2,3,c)$ and $(2,6,c)$  for all $c \equiv 0 \pmod{3}$, $c \ge 6$.
\item $(2,5,c)$ and $(2,8,c)$  for all $c \equiv 2 \pmod{3}$, $c \ge 5$.
\end{enumerate}
\end{lem}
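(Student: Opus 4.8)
The plan is to reduce each assertion to exhibiting an explicit percolating set of the exact size $(2b+2c+bc)/3$ guaranteed by Lemma~\ref{lem:lower-bound}, and to build these sets from a fixed self-starting configuration by repeatedly appending a short periodic block along the long axis. First I would record that the target size is an integer: in part~(a) we have $b\in\{3,6\}$ and $c\equiv 0\pmod 3$, while in part~(b) we have $b\in\{5,8\}$ and $c\equiv 2\pmod 3$, and in either case reducing modulo $3$ gives $2b+2c+bc\equiv 0\pmod 3$. Throughout I identify $(2,b,c)$ with two copies of the $b\times c$ grid and write $C_k=\{(i,j,k):i\in\{1,2\},\,j\in[b]\}$ for the $2\times b$ \emph{cross-section} at height $k$. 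The two features of thickness $2$ that drive the argument are that each vertex of $C_k$ has exactly one neighbour in each of $C_{k-1},C_{k+1}$ (when these exist) and at most three neighbours inside $C_k$.

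Because perfect grids of thickness $1$ are so scarce, Lemma~\ref{lem:recursive} cannot be applied with $a_1=a_2=1$ to split the thickness, so I would argue by direct construction. The engine is a \emph{block-append lemma}: if $(2,b,c_0)$ admits a percolating set $A$ of the exact size $(2b+2c_0+bc_0)/3$, then, since $A$ percolates, the top cross-section $C_{c_0}$ is eventually fully infected; I claim one can then place exactly $b+2$ further seeds inside the three new cross-sections $C_{c_0+1},C_{c_0+2},C_{c_0+3}$ so that $A$ together with these seeds percolates $(2,b,c_0+3)$. The count is forced, since increasing $c$ by $3$ raises the lower bound by $(2\cdot 3+b\cdot 3)/3=b+2$, so the enlarged set again has exactly the lower-bound size. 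To prove the claim I would treat the fully infected $C_{c_0}$ as a ``back wall'' and verify, on the finite $2\times b\times 3$ block alone, that the $b+2$ seeds together with this wall infect all of $C_{c_0+1},C_{c_0+2},C_{c_0+3}$; crucially this is to be checked \emph{without} any help from $C_{c_0+4}$, so that the same block works at every interior step and also at the terminal step.

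With the engine in place, the proof is an induction on $c$. For part~(a) the base cases are $(2,3,6)$ and $(2,6,6)$, of sizes $12$ and $20$; for part~(b) they are $(2,5,5)$ and $(2,8,5)$, of sizes $15$ and $22$. Each base case is a single explicit percolating set whose correctness is a finite check, and the accompanying figures in the appendix trace the infection. Appending copies of the periodic block then produces $(2,3,6+3t)$, $(2,6,6+3t)$, $(2,5,5+3t)$ and $(2,8,5+3t)$ of the exact lower-bound sizes for all $t\ge 0$, which sweeps out precisely the stated ranges $c\equiv 0\pmod 3$, $c\ge 6$ and $c\equiv 2\pmod 3$, $c\ge 5$. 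This simultaneously explains the lower limits on $c$: they are exactly the sizes of the smallest grid in each residue class carrying an explicit self-starting configuration.

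The main obstacle is the block-append lemma of the second paragraph, namely choosing the $b+2$ seeds so that the sweep is genuinely forward. The delicate points are (i) making the seed pattern identical in every block, so that the induction hypothesis (a fully infected back wall $C_{c_0}$) is exactly what the preceding block delivers; (ii) verifying that the three new cross-sections infect using only the back wall and the new seeds, with no borrowing from further along the axis, which is what legitimises the terminal block; and (iii) confirming that in the larger grid the already-infected part $C_1,\dots,C_{c_0}$ still infects exactly as before, which is automatic since extra cross-sections only supply additional infected neighbours. Each point reduces to a bounded computation on a $2\times b\times 3$ block, but assembling a single seed pattern that simultaneously has size $b+2$, is periodic, and sweeps forward with back-wall help only is the crux; should a single block fail to percolate from the back wall alone, the same scheme can be salvaged by verifying the sweep over two consecutive blocks instead.
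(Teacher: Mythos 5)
Your framing (explicit initial sets of exactly the size in Lemma~\ref{lem:lower-bound}, built from a repeating pattern along the long axis) matches the spirit of the paper, and your points about integrality, about Lemma~\ref{lem:recursive} being useless with thickness-one pieces, and about monotonicity under embedding are all fine. But your central step --- the block-append lemma --- is not just unproven; it is false, and the obstruction is the very potential function that proves Lemma~\ref{lem:lower-bound}. Suppose a block of $\ell$ new cross-sections, carrying exactly the incremental number $\ell(b+2)/3$ of seeds, could be fully infected using only its own seeds and a fully infected back wall $C_{c_0}$, with no help from beyond the block. Run that process inside the subgrid $H$ spanned by $C_{c_0}$ and the $\ell$ new cross-sections; $H$ is a $(2,b,\ell+1)$ grid, and $\Phi(A)=6|A|-n_H(A)$ is non-increasing along the process. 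At the end, $\Phi(V_H)=2\left(2b+2(\ell+1)+b(\ell+1)\right)=6b+2b\ell+4\ell+4$. At the start, $|A|=2b+\ell(b+2)/3$, while $n_H(A)\ge 2\left[(3b-2)+s_1\right]$, where $3b-2$ counts the edges inside the $2\times b$ back wall and $s_1$ is the number of seeds in the first new cross-section (each such seed has exactly one back-wall neighbour). This gives $\Phi(\mathrm{initial})\le 6b+2b\ell+4\ell+4-2s_1$, so comparing with $\Phi(V_H)$ forces $s_1=0$: the first new cross-section may contain no seeds. But then no cell of that cross-section can ever become infected within $H$: the earliest such infection would need three infected neighbours, yet it has exactly one neighbour in the back wall, one in the second new cross-section, and all its remaining neighbours lie in its own, still empty, cross-section --- at most two in total. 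So the block never fills. This kills the $\ell=3$ block, the terminal block (where there genuinely is nothing beyond), and equally your proposed salvage over two consecutive blocks ($\ell=6$): no ``forward sweep from a completed back wall'' with exactly incremental seed counts can exist in thickness $2$.

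This is precisely why the paper's proof looks different. It first disposes of the cases with $b\not\equiv c\pmod 2$ by quoting Lemma~\ref{lem:thickness2grids}, and of $(2,3,c)$ with $c\equiv 3\pmod 6$ by quoting Proposition 5.4 of \cite{DukesNoelRomer23}; only the same-parity residues need new work, and these are handled by the globally periodic Constructions~\ref{const:25c}, \ref{const:26c} and \ref{const:28c} (period six, $2(a+b)$ seeds per period, plus irregular boundary columns). Crucially, the infection in those constructions is not a one-directional sweep: as the milestone tables show, a few rows fill across the entire length in bounded time, a wave then travels the length of the grid (milestones at time $\approx c$), and the remaining cells --- including cells sitting right next to long-completed walls --- are reached only when a wave comes back (milestones at time $\approx 2c$). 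In other words, the cells on which your sweep provably stalls are infected there with help from arbitrarily far ahead, which is exactly what your back-wall-only hypothesis forbids. Any repair of your induction would have to weaken the hypothesis from ``the back wall is complete and the block fills autonomously'' to something tracking a partially infected interface that is only completed by a return wave --- which is, in effect, what the paper's tables verify directly.
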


\begin{proof}
We focus on the cases not already covered by Lemma~\ref{lem:thickness2grids}.  The cases $(2,3,c)$ for $c \equiv 3 \pmod{6}$ were shown to be perfect in Proposition 5.4 of \cite{DukesNoelRomer23}.  For the remaining cases,
the reader is referred to Appendix~\ref{app:perfect-families}, which gives constructions based on periodic initial sets.
\end{proof}

Although Lemma~\ref{lem:thickness2strips} suffices for our purposes, we remark that certain cases admit easy generalizations to two parameter families $(2,b,c)$ with general values $b \equiv c \pmod{6}$.

Shortly, it will be shown that Lemmas~\ref{lem:thickness2grids} and \ref{lem:thickness2strips} combine via the recursive construction to settle most cases with thickness $4$.  However, we need a few special constructions when one or both of the other sides is small.  The next two lemmas cover these cases.  

\begin{lem}
\label{lem:thickness4sporadic}
The following grids are perfect: $(4,6,6)$, $(4,6,9)$, and $(4,9,9)$.
\end{lem}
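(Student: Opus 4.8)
The lower bound of Lemma~\ref{lem:lower-bound} evaluates to $28$, $38$ and $51$ for the grids $(4,6,6)$, $(4,6,9)$ and $(4,9,9)$ respectively, so in each case the goal is to produce a percolating set of exactly that size, which I would do explicitly rather than recursively. The reason recursion is awkward here is that the only natural way to reach thickness $4$ through Lemma~\ref{lem:recursive} is to split the side of length $4$ as $2+2$, forcing all four ingredient grids to have thickness $2$; for these particular side lengths (writing $6=3+3$ and $9=3+6$) the smallest ingredient that arises is the grid $(2,3,3)$. This grid is not covered by Lemmas~\ref{lem:thickness2grids} or~\ref{lem:thickness2strips}, and in fact it is not perfect, since (as is easily checked) it admits no percolating set of the minimum size $7$. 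Hence the recursion cannot be launched, and these three grids are genuine base cases that must be handled by hand.

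The verification is streamlined by an observation I would record first: to certify that a grid is perfect it suffices to exhibit a percolating set $A_0$ whose size equals the right-hand side of~\eqref{eq:lower-bound}. Indeed, the monovariant $6|A|-n(A)$ from the proof of Lemma~\ref{lem:lower-bound} is non-increasing and, at the end of the process, equals $2(ab+ac+bc)=6|A_0|$; being non-increasing and starting and ending at the same value, it must be constant, which automatically forces $A_0$ to be independent and every newly infected vertex to acquire exactly three infected neighbours. Consequently I need not track the ``exactly three'' condition separately: it is enough to specify an initial set of the correct size and to check that the $3$-neighbour process eventually infects every vertex.

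Accordingly, for each of the three grids I would present an explicit initial set, viewing the grid as a stack of four $b \times c$ layers and listing the infected cells layer by layer. These sets are most cleanly described by a small repeating pattern, and can be located by a short computer search; since perfectness is invariant under permuting and reflecting the axes, a single well-chosen pattern typically suffices up to symmetry. Following the convention already used for Lemma~\ref{lem:thickness2strips}, I would relegate the diagrams of the three initial sets, together with a step-by-step depiction of the resulting cascades, to Appendix~\ref{app:perfect-families}.

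I expect the main obstacle to be the search itself. Because there is no slack between $|A_0|$ and the lower bound, the configurations are rigid: every infection must occur with exactly three infected neighbours, so a seed that spreads even one vertex too quickly, or that leaves a single corner starved, fails to be perfect. Finding a pattern that simultaneously meets the exact count and fully percolates in these small, tightly constrained grids is the delicate part. Once a candidate set is in hand, the remaining verification is a finite cascade check---tedious but routine---and is best communicated through the appended figures.
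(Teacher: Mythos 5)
Your reduction of perfection to ``exhibit a percolating set of size exactly $(ab+ac+bc)/3$'' is sound, and the target values $28$, $38$, $51$ are correct; the monovariant argument showing that the ``exactly three infected neighbours'' condition comes for free is a valid and genuinely useful observation. However, the proposal has two problems, one of them fatal.

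First, your claim that Lemma~\ref{lem:recursive} cannot be launched is wrong. You only considered splitting the side of length $4$ as $2+2$; the split $4=1+3$ is also available, and it succeeds for $(4,6,6)$: take $(a_1,a_2)=(1,3)$ and $(b_1,b_2)=(c_1,c_2)=(3,3)$, so that the ingredients are $(1,3,3)$ (perfect, being $(1,2^k-1,2^k-1)$ with $k=2$) and $(3,3,3)$ (perfect, by Lemma~\ref{lem:3small}). Lemma~\ref{lem:recursive}(a) then yields $(4,6,6)$ directly, which is exactly what the paper does. Your analysis of the $2+2$ splits is correct (each forces the non-perfect ingredient $(2,3,3)$), and the $1+3$ split does fail for the other two grids, since it requires $b_1=c_1$ and $b_2=c_2$ to be of the form $2^k-1$: this forces $b=c$, excluding $(4,6,9)$, and no two such numbers sum to $9$, excluding $(4,9,9)$. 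But the blanket assertion that all three grids are unavoidable base cases is false.

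Second, and decisively: you never produce the percolating sets. For $(4,6,9)$ and $(4,9,9)$ the entire content of the lemma is the existence of percolating sets of sizes $38$ and $51$; stating that such sets ``can be located by a short computer search'' and that you ``would present'' them is a plan, not a proof. There is no a priori guarantee that the search succeeds --- for $(2,3,3)$, which meets the divisibility condition, it provably does not, as the minimum percolating set there has size $8$ against a lower bound of $7$. The paper closes this gap by actually exhibiting the configurations (Constructions~\ref{const:469} and \ref{const:499} in the appendix), with infection times certifying percolation. Your streamlined verification criterion is fine, but until the sets themselves are on the table there is nothing to verify.
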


\begin{proof}
For $(4,6,6)$, we can apply Lemma~\ref{lem:recursive} with $(a_1,a_2)=(1,3)$ and $(b_1,b_2)=(c_1,c_2)=(3,3)$.
For $(4,6,9)$ and $(4,9,9)$, explicit constructions meeting the bound \eqref{eq:lower-bound} were found by computer.  These are given in the appendix as Constructions~\ref{const:469} and \ref{const:499}, respectively.
\end{proof}

\begin{lem}
\label{lem:thickness4strips}
The following grids are perfect:
\begin{enumerate}[label={\rm (\alph*)}]
\item $(4,4,c)$ for all $c \equiv 1 \pmod{3}$, $c \ge 4$.
\item $(4,7,c)$ for all $c \equiv 1 \pmod{3}$, $c \ge 4$.
\end{enumerate}
\end{lem}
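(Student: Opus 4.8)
The plan is to obtain both families from the recursive construction of Lemma~\ref{lem:recursive}(a), after first establishing a single auxiliary family of perfect thin grids: $(2,2,m)$ for every $m\equiv 2\pmod 3$ with $m\ge 2$. Granting this, part (a) follows by applying Lemma~\ref{lem:recursive}(a) to the split $a_1=a_2=2$, $b_1=b_2=2$, $c_1=2$, $c_2=c-2$. The three grids required to be perfect are $(a_2,b_2,c_1)=(2,2,2)$ and $(a_2,b_1,c_2)=(a_1,b_2,c_2)=(2,2,c-2)$, and the base grid is $(a_1,b_1,c_1)=(2,2,2)$; all are of the form $(2,2,m)$ with $m\equiv 2\pmod 3$, since $c\equiv 1\pmod 3$ and $c\ge 4$ force $c-2\equiv 2\pmod 3$ and $c-2\ge 2$. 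For part (b) I would instead use $a_1=a_2=2$, $b_1=2$, $b_2=5$, $c_1=2$, $c_2=c-2$, so that the grids to be checked are $(2,5,2)$, $(2,2,c-2)$, $(2,5,c-2)$ and the base grid $(2,2,2)$. Here the $(2,5,\cdot)$ pieces are perfect by Lemma~\ref{lem:thickness2strips}(b) (together with the auxiliary family, which supplies $(2,5,2)\cong(2,2,5)$), while the $(2,2,\cdot)$ pieces come from the auxiliary family. One checks that no admissible split of $(4,4,c)$ or $(4,7,c)$ avoids grids of the shape $(2,2,m)$, so this auxiliary family is genuinely the crux.

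It therefore remains to prove that $(2,2,m)$ is perfect whenever $m\equiv 2\pmod 3$. I would give this by an explicit periodic initial set, in the spirit of Lemma~\ref{lem:thickness2strips}. The grid $(2,2,m)$ is bipartite; one colour class $C$ has $2m$ vertices and percolates trivially, because every vertex of the opposite class has all of its $\ge 3$ neighbours in $C$. However, $|C|=2m$ exceeds the target size $(4+4m)/3$ by exactly $2(m-2)/3$, which is an even nonnegative integer. The construction takes $A_0$ to be $C$ with a periodic set of $2(m-2)/3$ vertices deleted, removed two at a time with period $3$ along the long axis. One must then argue that each deleted vertex is re-infected: its neighbours all lie in the opposite class and become infected from the surviving vertices of $C$, after which the deleted vertex again sees three infected neighbours. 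The base case $(2,2,2)$ is $C$ itself (the four even-weight vertices of the cube), and a direct check of $(2,2,5)$ (for example, deleting an antipodal pair from two interior layers of $C$) confirms the pattern.

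The main obstacle is this last verification: showing that for every $m\equiv 2\pmod 3$ the periodic deletions can be arranged so that all $2(m-2)/3$ deleted vertices recover and the process reaches every vertex, with each infection occurring at exactly three neighbours (equivalently, keeping the invariant $6|A_i|-n(A_i)$ constant, as in the proof of Lemma~\ref{lem:lower-bound}). This is where a clean deletion pattern and a short induction on $m$, or a uniform layer-by-layer propagation argument, are needed, and it is the step I would relegate to the appendix. Two structural necessities guide the choice of pattern: in $(2,2,m)$ every layer and every one of the four columns must contain a seed, since an otherwise-empty layer or column can never acquire the three infected neighbours needed to begin. Should the periodic $(2,2,m)$ analysis prove awkward, a self-contained alternative is to write down direct periodic percolating sets for $(4,4,c)$ and $(4,7,c)$ themselves, exactly as was done for the thickness-$2$ strips.
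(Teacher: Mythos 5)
Your reduction has a genuine gap, and it is fatal to the approach as written: the auxiliary family on which everything rests --- ``$(2,2,m)$ is perfect for every $m\equiv 2\pmod 3$, $m\ge 2$'' --- is false for every $m\ge 8$. Indeed, the paper's own concluding section notes that $(2,2,c)$ for $c\ge 8$ is not even \emph{optimal}: the minimum size of a percolating set in $(2,2,c)$ is $\lceil\frac{3c+1}{2}\rceil$, which strictly exceeds the surface-area bound $\frac{4+4c}{3}$ of Lemma~\ref{lem:lower-bound} as soon as $c>5$. So the only perfect grids of the form $(2,2,m)$ are $(2,2,2)$ and $(2,2,5)$. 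Your heuristic argument for the deletion scheme is where this goes wrong: it is circular. A deleted vertex $v$ of the colour class $C$ can only be re-infected after its neighbours (all in $\bar C$) are infected, but each such neighbour needs three infected neighbours of its own, and these lie in $C$. Near the ends of the tube (where vertices have degree $3$) and wherever two deletions share a $\bar C$ neighbour, this creates deadlocks; for example deleting both $C$-vertices of a single interior layer, or deleting a $C$-vertex in layer $1$, $2$, $m-1$ or $m$, already stalls the process. A careful count shows one can remove only about $\lfloor (m-1)/2\rfloor$ vertices from $C$ and still percolate, whereas your scheme requires $(2m-4)/3$ removals --- too many once $m\ge 8$. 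Consequently your recursion establishes part (a) only for $c\in\{4,7\}$ (where $c-2\in\{2,5\}$) and part (b) only for the same two values, and it fails for every $c\ge 10$, which is the bulk of the lemma.

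You correctly observed that no admissible split in Lemma~\ref{lem:recursive}(a) avoids ingredients of the shape $(2,2,m)$ (splitting $4=1+3$ forces thickness-one pieces, which are perfect only for square sides $2^k-1$, and $(2,4,\cdot)$ is never perfect since $8+6c\not\equiv 0\pmod 3$); but the right conclusion to draw from that observation is that the recursive construction \emph{cannot} prove this lemma at all, not that the $(2,2,m)$ family must be perfect. Your fallback suggestion --- writing down direct periodic percolating sets for $(4,4,c)$ and $(4,7,c)$ themselves --- is in fact what the paper does: it gives four one-parameter periodic constructions in Appendix~\ref{app:perfect-families} (Constructions~\ref{const:44c1}, \ref{const:44c4}, \ref{const:47c1} and \ref{const:47c4}), split according to the parity of $c$, i.e.\ $c\equiv 1$ or $4\pmod 6$. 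That fallback is not an optional simplification here; it is the only viable route, and it is the actual content of the proof.
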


\begin{proof}
We divide each case into slightly separate constructions according to the parity of $c$.  Similar to the thickness 2 families, the reader is referred to Appendix~\ref{app:perfect-families} for constructions based on periodic initial sets.
\end{proof}

We are now ready for the first of our main results, which characterizes all perfect grids of the form $(4,b,c)$.

\begin{thm}
\label{thm:perf4}
The grid $(4,b,c)$ is perfect for all $b \equiv c \equiv 0$ or $1 \pmod{3}$, $b,c \ge 4$.
\end{thm}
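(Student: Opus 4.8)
The plan is to realize every such grid $(4,b,c)$ as the output of the recursive construction in Lemma~\ref{lem:recursive}(a) applied with the thickness split $(a_1,a_2)=(2,2)$. With this choice the base grid $(a_1,b_1,c_1)$ and the three auxiliary grids $(a_2,b_2,c_1)$, $(a_2,b_1,c_2)$, $(a_1,b_2,c_2)$ are precisely the four thickness-$2$ grids $(2,b_i,c_j)$ with $i,j\in\{1,2\}$, where $b=b_1+b_2$ and $c=c_1+c_2$. Thus it suffices to exhibit, for each admissible pair $(b,c)$, a splitting of $b$ and $c$ for which all four grids $(2,b_i,c_j)$ are perfect; the conclusion then follows at once from Lemma~\ref{lem:recursive}(a). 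Before doing this I would record the routine check that the hypotheses force $3\mid 4b+4c+bc$, so that the benchmark value in \eqref{eq:lower-bound} is an integer.

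The residue bookkeeping dictates two cases. By the necessary condition for perfectness noted earlier, a thickness-$2$ grid $(2,\beta,\gamma)$ can be perfect only when $(\beta,\gamma)\equiv(0,0)$ or $(2,2)\pmod 3$, so all four parts $b_1,b_2,c_1,c_2$ must share a common residue in $\{0,2\}\pmod3$. When $b\equiv c\equiv0$ this forces every part $\equiv0\pmod3$, and when $b\equiv c\equiv1$ it forces every part $\equiv2\pmod3$ (since $2+2\equiv1\pmod3$). I would treat these two regimes separately, in each case splitting off a small first part from $\{3,6\}$ (respectively $\{5,8\}$) so that the three grids touching that part are covered by Lemma~\ref{lem:thickness2strips}, and dispatching the remaining ``large'' grid $(2,b_2,c_2)$ via Lemma~\ref{lem:thickness2grids}.

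The delicate point is the parity hypothesis $b\equiv c+3\pmod6$ in Lemma~\ref{lem:thickness2grids}: it applies to $(2,b_2,c_2)$ only when $b_2$ and $c_2$ have opposite parity. Since $3,6$ (respectively $5,8$) realize both parities while remaining $\equiv0$ (respectively $\equiv2$) modulo $3$, I can choose the small parts $b_1,c_1$ so as to make $b_2$ and $c_2$ of opposite parity, after which the mod-$6$ condition is automatic. I expect this parity adjustment, together with two minor complications, to be the crux. First, the grid $(2,3,3)$ is \emph{not} covered by any of our lemmas, so in the $0\pmod3$ case I must avoid the choice $b_1=c_1=3$; the parity analysis shows this is always achievable. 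Second, when $b$ or $c$ is itself small the parts cannot both be made large, and these boundary instances must be absorbed by the sporadic and strip constructions already in hand.

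Concretely, for $b\equiv c\equiv0\pmod3$ I would settle the cases $b,c\in\{6,9\}$ using Lemma~\ref{lem:thickness4sporadic} (and symmetry of the grid in its coordinates); settle the mixed cases where one of $b,c$ lies in $\{6,9\}$ by splitting that coordinate as $3+3$ or $3+6$ and the other into two parts at least $6$, so that every grid $(2,b_i,c_j)$ has a coordinate in $\{3,6\}$ and falls under Lemma~\ref{lem:thickness2strips}(a); and finally handle $b,c\ge12$ by the parity-adjusted split above. For $b\equiv c\equiv1\pmod3$ the grids $(2,\beta,\gamma)$ with $\beta,\gamma\in\{5,8\}$ are \emph{all} perfect by Lemma~\ref{lem:thickness2strips}(b), so no forbidden small grid arises; here I would use Lemma~\ref{lem:thickness4strips} when $b$ or $c$ lies in $\{4,7\}$ and the $\{5,8\}$-split when $b,c\ge10$, again choosing $b_1,c_1$ to give $b_2,c_2$ opposite parity, or invoking Lemma~\ref{lem:thickness2strips}(b) directly for the large grid whenever a part is forced into $\{5,8\}$. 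Verifying that a valid split exists for every residue-and-parity combination is then the only remaining labour, and it is entirely finite and mechanical.
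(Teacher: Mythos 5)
Your proposal is correct and takes essentially the same route as the paper's proof: Lemma~\ref{lem:recursive}(a) with $a_1=a_2=2$, thickness-$2$ ingredients supplied by Lemmas~\ref{lem:thickness2grids} and~\ref{lem:thickness2strips} with $b_1,c_1$ drawn from $\{3,6\}$ (resp.\ $\{5,8\}$) and chosen so that $b_2,c_2$ have opposite parity, and Lemmas~\ref{lem:thickness4sporadic} and~\ref{lem:thickness4strips} absorbing the small boundary cases. If anything, your handling of the $0\pmod{3}$ regime is slightly more careful than the paper's: its stated subcases (which require $c\ge 15$ when $b\equiv c\pmod{2}$ and $b\ge 9$) overlook $(b,c)=(12,12)$, a grid that your ``both sides at least $12$'' analysis covers cleanly with the split $(b_1,c_1)=(3,6)$ or $(6,6)$.
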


\begin{proof}
Suppose without loss of generality that $b \le c$.  
For $b \in \{4,7\}$, we make direct use of Lemma~\ref{lem:thickness4strips}.  If both $b,c \in \{6,9\}$, we apply Lemma~\ref{lem:thickness4sporadic}. 
Therefore, in what follows we may assume $c \ge 10$.  We divide the remainder of the proof according to cases on the congruence classes of $b$ and $c$ (mod $6$).  In each of these cases, we use Lemma~\ref{lem:recursive}(a) with $a_1=a_2=2$, and choose $b_1,b_2,c_1,c_2$ such that $b_1+b_2=b$ and $c_1+c_2=c$.

{\sc Case 1}: $b \equiv c \equiv 0 \pmod{3}$.
First, if $b=6$ and $c \ge 12$, we can take
$(b_1,c_1)=(3,6)$.  Perfect grids exist for $(2,3,6)$ and $(2,3,c-6)$ by Lemma~\ref{lem:thickness2strips}(a). Suppose next that $b \equiv c \pmod{2}$ with $b \ge 9$ and $c \ge 15$. We can again take $(b_1,c_1)=(3,6)$.  The grids $(2,3,6)$, $(2,3,c-6)$, $(2,b-3,6)$ are perfect by 
Lemma~\ref{lem:thickness2strips}(a), and $(2,b-3,c-6)$ is perfect by Lemma~\ref{lem:thickness2grids}.  Suppose $b \not\equiv c \pmod{2}$ with $b \ge 9$ and $c \ge 12$.  This time, we take $(b_1,c_1)=(6,6)$.  The grids $(2,6,6)$, $(2,6,c-6)$ and $(2,b-6,6)$ are perfect by Lemma~\ref{lem:thickness2strips} and $(2,b-6,c-6)$ is perfect by Lemma~\ref{lem:thickness2grids}.  In each of these cases, the recursive construction Lemma~\ref{lem:recursive}(a) with the indicated ingredients gives us that $(4,b,c)$ is perfect. 

{\sc Case 2}: $b \equiv c \equiv 1 \pmod{3}$.
If $b=c=10$, we can apply the recursive construction with four copies of $(2,5,5)$, which is perfect by Lemma~\ref{lem:thickness2strips}.  So assume $c \ge 13$ in the remainder of the argument. 
If $b \equiv c \pmod{2}$, use $(b_1,c_1)=(5,8)$.  If $b \not\equiv c \pmod{2}$, use $(b_1,c_1)=(5,5)$.  In either case, $b_2,c_2$ are each at least $5$ and have different parity.  Thus all required perfect grids exist by Lemmas~\ref{lem:thickness2grids} and \ref{lem:thickness2strips}(b).  The recursive construction shows that $(4,b,c)$ is perfect.
\end{proof}

\section{Optimal grids of thickness 7 to 10}
\label{sec:opt}

In this section, we use Theorem~\ref{thm:perf4} to lower the known minimum side length of optimal $3$-dimensional grids under the $3$-neighbour process.  First, we set up some helpful specific constructions, beginning with a series of small optimal grids found by computer search.

\begin{lem}
\label{lem:45small-opt}
The grid $(a,b,c)$ is optimal whenever $a \in \{4,5\}$ and $b,c \in \{2,\dots,7\}$.
\end{lem}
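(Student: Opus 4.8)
The plan is to treat this as a finite verification. Since permuting the three coordinates yields isomorphic grids, I would first fix an ordering and observe that it suffices to handle the distinct grids with $a \in \{4,5\}$ and $2 \le b \le c \le 7$ — at most $2\cdot\binom{6}{2}$ plus a handful more from equal pairs, so only a few dozen grids in all. For each such grid I would compute the target value $T := \lceil (ab+ac+bc)/3 \rceil$. The key reduction is that, by Lemma~\ref{lem:lower-bound}, every percolating set has size at least $(ab+ac+bc)/3$ and hence (being an integer) at least $T$; so exhibiting a single percolating set of size exactly $T$ simultaneously establishes minimality and optimality. Thus the entire lemma reduces to producing, for each grid, one percolating set meeting the rounded-up surface-area bound.

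For the subfamily whose target $T$ is already an integer (the potentially perfect grids), I would dispatch as many cases as possible using the results established earlier, since every perfect grid is in particular optimal. When $a=4$ and $b \equiv c \equiv 0$ or $1 \pmod{3}$ with $b,c \ge 4$, Theorem~\ref{thm:perf4} already gives perfection directly. Cases in which one of the remaining sides equals $2$ or $3$ collapse to thickness-$2$ grids, several of which are covered by Lemmas~\ref{lem:thickness2grids} and~\ref{lem:thickness2strips}. This first pass clears a substantial fraction of the list and leaves the genuinely residual cases — most notably all grids with $a=5$, for which the excerpt provides no general perfect-grid theorem, together with the mixed-congruence $a=4$ grids — to be handled by hand.

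For those residual grids I would rely on explicit percolating sets of size $T$ obtained by computer search and recorded in the appendix. Verifying any proposed initial set is entirely routine: one runs the $3$-neighbour process starting from $A_0$ and checks that it eventually infects every vertex, a check that is easily automated and, for grids this small, even checkable by hand. The real difficulty lies not in verification but in the \emph{search} — finding a percolating set as small as the ceiling $T$, rather than merely some percolating set. I would model this as a minimization problem (for instance via an integer program, or a bounded branch-and-bound search that exploits the grid's coordinate symmetries to prune), and the assertion of the lemma is precisely the statement that each of these finitely many grids admits a percolating set meeting the rounded-up lower bound. Once such a set is produced for every case, the per-case verification is immediate and the lemma follows.
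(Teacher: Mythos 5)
Your proposal is correct and follows essentially the same route as the paper: its proof likewise observes that exhibiting a percolating set of size $\lceil (ab+ac+bc)/3\rceil$ certifies optimality, dispatches the perfect cases by citing small cases of \cite{DukesNoelRomer23} (thickness $2$ and $3$) and Theorem~\ref{thm:perf4} (thickness $4$ and $5$), and settles the remaining non-perfect grids with explicit computer-found constructions recorded in Appendix~\ref{app:optimal}. The only slip is cosmetic: a remaining side equal to $3$ gives a thickness-$3$ grid (covered by the propositions of \cite{DukesNoelRomer23}, cf.\ Lemma~\ref{lem:3small}), not a thickness-$2$ grid, but this does not affect the structure of the argument.
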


\begin{proof}
Perfect constructions for $\min(a,b,c) \in \{2,3\}$ follow from various small cases found in \cite{DukesNoelRomer23}, and for $\min(a,b,c) \in \{4,5\}$ from Theorem~\ref{thm:perf4}. Otherwise, explicit optimal (non-perfect) constructions for the remaining cases are given in Appendix~\ref{app:optimal}.
\end{proof}

The next lemma summarizes various results in Section 5 of \cite{DukesNoelRomer23}.

\begin{lem}
\label{lem:3small}
The grid $(3,u,v)$ is perfect whenever $u \in \{2,\dots,7\}$ and $v \equiv 3\pmod{6}$, with the exception of $(u,v)=(2,3)$.
\end{lem}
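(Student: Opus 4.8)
The plan is to verify the necessary divisibility condition and then split into cases according to the thickness $\min(3,u,v)$. Since $v \equiv 3 \pmod 6$ forces $v \equiv 0 \pmod 3$, both of the sides $3$ and $v$ are divisible by three, so
\[
3u + 3v + uv \equiv uv \equiv 0 \pmod 3
\]
for every $u$. Thus the surface-area lower bound \eqref{eq:lower-bound} is an integer and two of the three sides are divisible by three, so each listed grid at least passes the necessary condition for perfection. I would then treat the thickness-$2$ case $u=2$ and the thickness-$3$ cases $u \in \{3,\dots,7\}$ separately.

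For $u = 2$ the grid is $(2,3,v)$, and whenever $v \equiv 3 \pmod 6$ with $v \ge 9$ we have $v \equiv 0 \pmod 3$ and $v \ge 6$, so this is exactly a member of the family in Lemma~\ref{lem:thickness2strips}(a) (equivalently Proposition 5.4 of \cite{DukesNoelRomer23}), giving perfection directly. The only excluded value is $v=3$, that is, the grid $(2,3,3)$, for which \eqref{eq:lower-bound} equals $7$; here I would confirm by a short exhaustive check that no size-$7$ set percolates, which is precisely why $(u,v)=(2,3)$ must be omitted.

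For $u \in \{3,\dots,7\}$ the grids $(3,u,v)$ have thickness $3$, and here the recursive construction is of little direct use: to preserve a side of length $3$ one must split it as $1+2$ in Lemma~\ref{lem:recursive}, forcing a thickness-$1$ ingredient $(1,b,c)$, which is perfect only when $b=c=2^k-1$. I would therefore rely on the explicit constructions of Section~5 of \cite{DukesNoelRomer23}: first establish the base grids $(3,u,3)$ for each $u \in \{3,\dots,7\}$ by a direct percolating set meeting \eqref{eq:lower-bound}, and then extend to all $v \equiv 3 \pmod 6$ by appending a length-$6$ slab in the $v$-direction, using a periodic initial pattern of period $6$.

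The main obstacle is the verification in this last family: one must check that the proposed periodic initial set both percolates and does so \emph{without waste}, meaning every newly infected vertex has exactly three infected neighbours at the moment of its infection, so that the running count stays exactly on the bound \eqref{eq:lower-bound}. This is where the careful (and in places computer-assisted) analysis of the reference does the work; once the period-$6$ pattern is shown to infect its own slab while correctly seeding the next one, an induction on $v$ closes out all thickness-$3$ cases and completes the lemma.
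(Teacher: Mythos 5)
Your proposal is correct and follows essentially the same route as the paper: both arguments reduce every case to the results of Section~5 of \cite{DukesNoelRomer23} (Proposition~5.4 for $u=2$, and the thickness-3 constructions there for $u\in\{3,\dots,7\}$, which the paper cites as Propositions~5.9, 5.11 and 5.13), with your base-grid-plus-periodic-slab narrative being a gloss on how those cited propositions are themselves proved. Note only that your divisibility check and your exhaustive verification that $(2,3,3)$ is not perfect are harmless but unnecessary, since the lemma asserts perfection only for the non-excepted pairs and makes no claim about $(u,v)=(2,3)$.
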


\begin{proof}
These follow directly from the following four results in \cite{DukesNoelRomer23}:
Proposition 5.4 for $u=2$,
Proposition 5.9 for $u=4$,
Proposition 5.11 for $u=6$, and
Proposition 5.13 for $u \in \{3,5,7\}$.
\end{proof}

With these tools in place, we are ready for our main result on optimal 3-dimensional grids.

\begin{thm}
\label{thm:opt7}
The grid $(a,b,c)$ is optimal whenever $\min(a,b,c) \ge 7$.
\end{thm}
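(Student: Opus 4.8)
The plan is to reduce to the case $\min(a,b,c)\in\{7,8,9,10\}$ and then repeatedly invoke the recursive construction of Lemma~\ref{lem:recursive}(b). For $\min(a,b,c)\ge 11$ the grid is already known to be optimal by \cite{DukesNoelRomer23}, so it suffices to treat the four thickness values $7,8,9,10$. Throughout I assume without loss of generality that $a=\min(a,b,c)\in\{7,8,9,10\}$ and $a\le b\le c$.

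The engine is Lemma~\ref{lem:recursive}(b): writing $(a,b,c)=(a_1+a_2,\,b_1+b_2,\,c_1+c_2)$, it produces an optimal percolating set for $(a,b,c)$ provided the seed $(a_1,b_1,c_1)$ is optimal and the three cross-grids $(a_2,b_2,c_1)$, $(a_2,b_1,c_2)$ and $(a_1,b_2,c_2)$ are perfect. I would always take $a_1\in\{4,5\}$ and $b_1,c_1\in\{2,\dots,7\}$, so that the seed is optimal by Lemma~\ref{lem:45small-opt}, and set $a_2=a-a_1\in\{2,\dots,6\}$. Since $b_1$ (and likewise $c_1$) ranges over $\{2,\dots,7\}$ it can be chosen to realize any residue modulo $6$, so I have full control over the residues of $b_2=b-b_1$ and $c_2=c-c_1$; the only genuine constraint is that $b_2,c_2$ be large enough to invoke the relevant perfect-grid results.

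With this freedom the proof becomes a case analysis on the value of $a$ and on the residues of $b,c$ modulo $6$. In each case I would choose the split so that the large cross-grid $(a_1,b_2,c_2)$ is perfect by Theorem~\ref{thm:perf4} when $a_1=4$ (requiring $b_2\equiv c_2\equiv 0$ or $1\pmod 3$) or by the thickness-$\ge 5$ result of \cite{DukesNoelRomer23} when $a_1=5$; and so that the two thin cross-grids are perfect via whichever of Theorem~\ref{thm:perf4}, the thickness-$\ge 5$ result, Lemma~\ref{lem:3small}, or Lemmas~\ref{lem:thickness2grids} and~\ref{lem:thickness2strips} matches $a_2$. For instance, when $a=7$ one uses the split $a_1=5,\,a_2=2$ (thickness-$2$ cross-grids) for most residues, falling back to $a_1=4,\,a_2=3$ (thickness-$4$ and thickness-$3$ cross-grids) for the residues it misses; the smallest grids, such as $(7,7,7)$, are handled by explicit small splits like $(4,3,3)+(3,4,4)$, whose cross-grids $(3,4,3)$, $(3,3,4)$ and $(4,4,4)$ are perfect by Lemma~\ref{lem:3small} and Theorem~\ref{thm:perf4}.

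The main obstacle is simultaneously meeting the congruence conditions imposed by three different perfect-grid results, together with the boundary cases. The tightest constraints come from thickness-$3$ cross-grids, which by Lemma~\ref{lem:3small} force one side to be $\equiv 3\pmod 6$ (or at most $7$), so I would organize the argument to avoid $a_2=3$ where possible, preferring $a_2\in\{2,4\}$ or $a_2\ge 5$ (available once $a\ge 9$), where the general thickness-$\ge 5$ result of \cite{DukesNoelRomer23} applies without exceptional small sides. The remaining difficulty is the finite collection of grids with $b$ or $c$ close to $a$, for which $b_2$ or $c_2$ is too small to feed into the perfect-grid results; these I would dispatch individually by ad hoc splits together with the sporadic perfect grids of Lemma~\ref{lem:thickness4sporadic}.
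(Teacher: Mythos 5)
Your reduction to $a\in\{7,8,9,10\}$ and your reliance on Lemma~\ref{lem:recursive}(b) match the paper, but the engine of your plan --- ``I have full control over the residues of $b_2$ and $c_2$; the only genuine constraint is that $b_2,c_2$ be large enough'' --- is false, and the specific splits you advertise do not work. Every known perfect grid whose thickness is \emph{not} divisible by $3$ forces its other two sides into a \emph{common} residue class mod $3$: by the necessary condition stated after Lemma~\ref{lem:lower-bound}, $(2,y,z)$ and $(5,y,z)$ can be perfect only if $y\equiv z\equiv 0$ or $2\pmod 3$, and $(4,y,z)$ only if $y\equiv z\equiv 0$ or $1\pmod 3$. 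When $a_1,a_2\in\{2,4,5\}$, the three cross-grids $(a_2,b_2,c_1)$, $(a_2,b_1,c_2)$, $(a_1,b_2,c_2)$ therefore chain these equalities together and force $b_1\equiv b_2\equiv c_1\equiv c_2\equiv r\pmod 3$, with $r$ in the intersection of the allowed sets; hence $b\equiv c\equiv 2r$, and these are exactly the residue classes in which $(a,b,c)$ is already perfect. Concretely: your main split for $a=7$, $(a_1,a_2)=(5,2)$, handles \emph{no} non-perfect pair $(b,c)$ whatsoever (the solvable classes $b\equiv c\equiv 0$ and $b\equiv c\equiv 1$ are the perfect ones); for $a=8$ the split $(4,4)$ is equally vacuous; and for $a=9$ your constraint $a_1\in\{4,5\}$ leaves only $a_2\in\{5,4\}$, for which the chaining (with allowed sets $\{0,1\}\cap\{0,2\}=\{0\}$) shows no choice of $b_1,c_1$ can make all three crosses perfect when $b,c\not\equiv 0\pmod 3$, i.e.\ in any case where the theorem still needs proving. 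So the plan collapses precisely where it has to do work, and for $a=9$ it admits no repair within your stated constraints.

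The structural point is the opposite of what you assert: thickness-$3$ crosses are not the ``tightest constraints'' to be organized around and avoided --- they are the only usable ones. Since $3\equiv 0\pmod 3$, a grid $(3,u,v)$ can be perfect with $u$ in \emph{any} residue class, and Lemma~\ref{lem:3small} delivers this whenever $v\equiv 3\pmod 6$ and $u\in\{2,\dots,7\}$ (excepting $(u,v)=(2,3)$); this is what breaks the chaining and lets $b_1,c_1$ absorb arbitrary residues of $b,c$. Accordingly the paper takes $a_2=3$ in every case $a\in\{7,8,9\}$, i.e.\ $(a_1,a_2)=(4,3),(5,3),(6,3)$, and handles $a=10$ by the thickness-$\ge 5$ argument of \cite{DukesNoelRomer23} upgraded with Theorem~\ref{thm:perf4}. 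Note that for $a=9$ this forces a thickness-$6$ seed $(6,b_1,c_1)$, which is not covered by Lemma~\ref{lem:45small-opt} when $\max(b_1,c_1)=7$ and needs its own sub-recursion (the $(6,7,7)$ case) --- a step your framework, fixing $a_1\in\{4,5\}$, cannot accommodate. The boundary patches you defer to are real but secondary; without switching the thin crosses to thickness $\equiv 0\pmod 3$, your case analysis cannot even begin.
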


\begin{proof}
Suppose without loss of generality that $7 \le a \le b \le c$.  
Theorem 1.6 of \cite{DukesNoelRomer23} handles all cases with $a \ge 11$.  The proof of this uses the recursive construction, stated here as our Lemma~\ref{lem:recursive}(b), combining one optimal and three perfect grids of thickness at least $5$.  With our Theorem~\ref{thm:perf4} and Lemma~\ref{lem:45small-opt}, we have access to optimal and perfect grids of thickness at least $4$.  The same proof as in \cite{DukesNoelRomer23} now works for $a=10$.  We give a minor modification of the argument for each of the cases $a \in \{7,8,9\}$.

Suppose $a=9$. If either $3 \mid b$ or $3 \mid c$, then
$(a,b,c)$ is perfect by Theorem 6.3 of \cite{DukesNoelRomer23}.  Otherwise, let $a_1=6$ and choose $b_1,c_1 \in \{2,4,5,7\}$ such that
$b-b_1\equiv c-c_1\equiv 3 \pmod{6}$.  Put $a_2=3$, $b_2=b-b_1$, and $c_2=c-c_1$. The grids 
$(a_2,b_1,c_2)$ and $(a_2,b_2,c_1)$
are perfect by Lemma~\ref{lem:3small}.  The grid $(a_1,b_2,c_2)$ is perfect, either by Lemma~\ref{lem:3small} if $\min(b_2,c_2)=3$ or by Theorem 6.3 of \cite{DukesNoelRomer23} when $\min(b_2,c_2) \ge 9$.  The grid $(a_1,b_1,c_1)$ is optimal, either by Lemma~\ref{lem:45small-opt} if $\max(b_1,c_1) \le 5$ or in the case $(6,7,7)$ via optimal $(3,4,4)$ and perfect grids $(3,3,3)$ and $(3,3,4)$.  With these choices of $a_i,b_i,c_i$, Lemma~\ref{lem:recursive}(b) implies that $(a,b,c)$ is optimal.

Suppose $a=8$.
If $b \equiv c \equiv 0 \pmod{3}$ or $2 \pmod{3}$, then $(8,b,c)$ is perfect by Theorem 6.3 of \cite{DukesNoelRomer23}.  Otherwise, let $a_1=5$ and choose $b_1,c_1 \in \{2,\dots,7\}$ such that $b-b_1\equiv c-c_1\equiv 3 \pmod{6}$.  Put $a_2=3$, $b_2=b-b_1$, and $c_2=c-c_1$. 
Since $b_2,c_2 \equiv 3 \pmod{6}$, we know that $(a_1,b_2,c_2)=(5,b_2,c_2)$ is perfect, either by Theorem 6.3 of \cite{DukesNoelRomer23} if $b_2,c_2 \ge 9$, or by Lemma~\ref{lem:3small} if $b_2$ or $c_2=3$.  Suppose, for the moment, that neither $(b_1,c_2)$ nor $(c_1,b_2)$ equals $(2,3)$.  Then each of $(a_2,b_1,c_2)=(3,b_1,c_2)$ and $(a_2,b_2,c_1)=(3,b_2,c_1)$ is perfect by Lemma~\ref{lem:3small}.  

The case $(b_1,c_2)=(2,3)$ implies $c = c_1+c_2 \le 7+3 = 10$ while $b=b_1+b_2 \ge 2+9=11$, a contradiction to our assumption that $a \le b \le c$.
The case $(b_2,c_1)=(3,2)$ implies $b = b_1+b_2 \le 7+3 = 10$, and hence $b \in \{8,9,10\}$.  Since perfect grids of thickness 8 are already settled and $c \equiv c_2 = 2 \pmod{3}$, we may assume $b \neq 8$.

For $b=9$ or $10$, we use the minor modification $(b_1,b_2)=(3,6)$ or $(4,6)$, respectively.
We again have that $(a_1,b_2,c_2)=(5,6,c_2)$ is perfect, this time using Lemma~\ref{lem:45small-opt} if $c_2=3$.  The grids $(a_2,b_1,c_2)=(3,b_1,c_2)$ and $(a_2,b_2,c_1)=(3,6,2)$ are perfect by Lemma~\ref{lem:3small}.

The grid $(a_1,b_1,c_1)=(5,b_1,c_1)$ is optimal for each choice of $b_1$ and $c_1$, by Lemma~\ref{lem:45small-opt}.  It follows by Lemma~\ref{lem:recursive}(b) that $(a,b,c)$ is optimal.

Finally, suppose $a=7$.  The proof in this case is similar to that for $a=8$, except for the following differences.  Perfect cases occur when $b\equiv c \equiv 0 \pmod{3}$ or $1 \pmod{3}$.  We use $(a_1,a_2)=(4,3)$. When $b_2 \equiv c_2 \equiv 3 \pmod{6}$, the grid $(a_1,b_2,c_2)$ is perfect by Theorem~\ref{thm:perf4}.  Grids $(a_2,b_1,c_2)$ and $(a_2,b_2,c_1)$ are again perfect by Lemma~\ref{lem:3small} unless $(b_2,c_1)=(3,2)$.  This only occurs for $b \le 10$ and $c\equiv 5 \pmod{6}$.  
The same patch taking $b_2=6$ works except when $b=7$. In that case, we have $(a,b,c)=(7,7,c)$.  Here, we can use $(a_1,a_2)=(b_1,b_2)=(4,3)$.  When $c=11$, the choice $c_1=5$ yields one optimal and three perfect ingredients by Lemma~\ref{lem:45small-opt}.  When $c \ge 17$, the recursive construction with $c_1=8$ works assuming an optimal grid $(4,4,8)$ and perfect grids $(3,3,8)$ and $(3,4,c-8)$.  The first of these is given as Construction~\ref{const:448} in the Appendix and the other two are shown to exist in Propositions 5.8 and 5.9, respectively, of \cite{DukesNoelRomer23}.
\end{proof}

\section{Conclusion}
\label{sec:concl}

In this paper, we have studied the $3$-neighbour bootstrap percolation process on $a \times b \times c$ grids.  We have extended the near-classification of perfect grids via several new constructions.  In particular, it is now known that $(a,b,c)$ is perfect whenever $\min(a,b,c) \ge 4$ and $3 \mid ab+ac+bc$.  As a consequence, we have also shown that $(a,b,c)$ is optimal whenever $\min(a,b,c) \ge 7$.

Perfect two-dimensional grids were fully classified in \cite{DukesNoelRomer23}.  To complete the classification of perfect three-dimensional grids, all that remains is to find certain still outstanding two-parameter perfect constructions for grids of thickness 2 and 3.  Following this, a full classification of optimal $3$-dimensional grids should be within reach.  The non-optimal grids are almost surely limited to $(2,3,3)$, $(2,2,c)$ for $c \ge 8$, and $(1,b,c)$ for certain cases of $b$ and $c$.  In the first case, the minimum size of a percolating set is 8.  In the second family of cases, a straightforward argument gives $\lceil \frac{3c+1}{2} \rceil$ as the minimum size.  Finally, according to a very recent seminar announcement, Neal Bushaw has completed the determination of minimum-sized percolating sets in two-dimensional grids.

\bibliographystyle{plain}
\bibliography{perc-thin}

\begin{thebibliography}{1}

\bibitem{Benevides+21+}
F.~Benevides, J.-C. Bermond, H.~Lesfari, and N.~Nisse.
\newblock Minimum lethal sets in grids and tori under 3-neighbour bootstrap
  percolation.
\newblock Technical report, 2021.
\newblock {HAL Research Report 03161419v4}.

\bibitem{ChalupaLeathReich79}
J.~Chalupa, P.~L. Leath, and G.~R Reich.
\newblock Bootstrap percolation on a {B}ethe lattice.
\newblock {\em J. Phys. C}, 12:L31--L35, 1979.

\bibitem{DukesNoelRomer23}
P.J. Dukes, J.~Noel, and A.E. Romer.
\newblock Extremal bounds for 3-neighbor bootstrap percolation in dimensions
  two and three.
\newblock {\em SIAM Journal on Discrete Mathematics}, 37(3):2088--2125, 2023.

\bibitem{GravnerHolroydSivakoff21}
J.~Gravner, A.~E. Holroyd, and D.~Sivakoff.
\newblock Polluted bootstrap percolation in three dimensions.
\newblock {\em Ann. Appl. Probab.}, 31(1):218--246, 2021.

\bibitem{HedzetHenning25}
J~Hed\v{z}et and M.A. Henning.
\newblock 3-neighbor bootstrap percolation on grids.
\newblock {\em Discussiones Mathematicae Graph Theory}, 45(1):283--310, 2025.

\bibitem{PrzykuckiShelton20}
M.~Przykucki and T.~Shelton.
\newblock Smallest percolating sets in bootstrap percolation on grids.
\newblock {\em Electron. J. Combin.}, 27(4):Paper No. 4.34, 11, 2020.

\end{thebibliography}

\newpage
\appendix

\section{Appendix: Constructions in Small Grids}

We present several perfect and optimal constructions in specific grids used as building blocks in our proofs of Theorems~\ref{thm:perf4} and \ref{thm:opt7}.  

An $a \times b \times c$ grid is presented as $a$ layers, each with $b$ rows and $c$ columns. An initial set $A_0$ of cardinality $\lceil (ab+ac+bc)/3 \rceil$ is indicated in red, along with time steps for percolation to other cells in the grid.

\subsection{Small perfect constructions}
\label{app:perfect-small}

\begin{const}
\label{const:469}
$(4,6,9)$ is perfect.
\end{const}

\input{4-6-9-tikz.txt}

\newpage
\begin{const}
\label{const:499}
$(4,9,9)$ is perfect.
\end{const}

\input{4-9-9-tikz.txt}

\newpage
\subsection{One-parameter perfect constructions}
\label{app:perfect-families}

The following constructions are periodic with repeating blocks of six columns in the middle of the grid.  Each such block has $2(a+b)$ marked cells so that, when boundary cells are included, the initial set $A_0$ meets the bound of Lemma~\ref{lem:lower-bound}.

Examples are shown for $24 \le c < 30$.  The common behavior is that various walls or sub-grids percolate after a fixed number of time steps, followed by additional milestones reached at roughly multiples of $c$ further time steps.  The progression is outlined in a table following each construction.
A cell labeled $(x,y,z)$ appears in layer $x$, row $y$, column $z$.

\begin{const}
\label{const:25c}
$(2,5,c)$ is perfect for all $c \equiv 5 \pmod{6}$.
\end{const}

\input{2-5-29-tikz.txt}

\begin{tabular}{llll}
 & time step & region & reasons \\
\hline
(i) & 1 & column 1 of layer 1 \\
(ii) & 3 & right three columns of layer 1 \\
(iii) & 3 & upper right $2 \times 2$ grid of layer 2 & (ii) \\
(iv) & 5 & upper-left $3 \times (c-4)$ grid of layer 1 \\
(v) & 5 & lower-left $3 \times (c-4)$ grid of layer 2 \\
(vi) & $c-1$ & layer 1, except for column $c-3$ & (i)--(v) \\
(vii) & $c-1$ & lower-left $4 \times (c-4)$ grid of layer 2 & (v), $(1,2,2)$ \\
(viii) & $c+4$ & rest of layer 1 & (ii), (v), (vi), $(2,5,c-3)$ \\
(ix) & $c+10$ & layer 2, except for row 1 & (vii), (viii)\\
(x) & $2c+3$ & rest of layer 2 & (viii), (ix)\\
\hline
\end{tabular}

\vspace{5mm}
\noindent
{\bf Remark.}
The minimal case $(2,5,5)$ for this construction uses columns $1,c-4,\dots,c$, and a similar choice of boundary columns can be used for minimal instances of those to follow.
\newpage
\begin{const}
\label{const:26c}
$(2,6,c)$ is perfect for all $c \equiv 0 \pmod{6}$.
\end{const}

\input{2-6-24-tikz.txt}

\begin{tabular}{llll}
 & time step & region & reasons \\
\hline
(i) & 2 & rows 5 and 6 of layer 1,\\
&& ~~except for $(1,5,1)$, $(1,6,1)$, $(1,5,2)$ \\
(ii) & 6 & rows 1 to 4 of layer 2 \\
(iii) & 6 & right three columns of layer 2 \\
(iv) & 7 & right column of layer 1, except for $(1,1,c)$ \\
(v) & $c+3$ & rest of layer 2 & (i)--(iii), $(2,6,1)$ \\
(vi) & $c+6$ & rows 2 to 6 of layer 1, except for column 1 & (i), (iv), (v) \\
(vii) & $2c+5$ & rest of layer 1 & (v), (vi), $(1,1,1)$\\
\hline
\end{tabular}

\newpage
\begin{const}
\label{const:28c}
$(2,8,c)$ is perfect for all $c \equiv 2 \pmod{6}$.
\end{const}

\input{2-8-26-tikz.txt}

\begin{tabular}{llll}
 & time step & region & reasons \\
\hline
(i) & 3 & rows 7 and 8 of layer 1,   \\
&& ~~except for $(1,7,1)$, $(1,8,1)$, $(1,7,2)$ \\
(ii) & 6 & rows 1 to 6 of layer 2 \\
(iii) & 6 & right two columns of layer 2 \\
(iv) & 7 & right column of layer 1, except for $(1,1,c)$ \\
(v) & $c+1$ & rest of layer 2 & (i)--(iii), $(2,8,1)$ \\
(vi) & $c+6$ & rows 2 to 8 of layer 1, except for column 1 & (i), (iv), (v) \\
(vi) & $2c+5$ & rest of layer 1 & (v), (vi), $(1,1,1)$ \\
\hline
\end{tabular}

\newpage
\begin{const}
\label{const:44c1}
$(4,4,c)$ is perfect for all $c \equiv 1 \pmod{6}$, $c \ge 7$.
\end{const}

\input{4-4-25-tikz.txt}

\begin{tabular}{llll}
 & time step & region & reasons \\
\hline
(i) & 1 & rows 3 and 4 of layer 1, \\
&&~~except for (1,3,2), (1,3,3), (1,3,4), (1,4,3) \\
(ii) & 7 & rows 1 to 3 of layer 4, 
except (4,3,1), (4,3,2) \\
(iii) & 
$t^*$ & rows 1 to 3 of layer 3, except (3,3,1), (3,3,2) & (ii) \\
(iv) & $t+1$ & $(2,3,c)$ and $(3,4,c)$ & (i), (iii), $(2,4,c)$, $(4,4,c)$\\
(v) & $t+c$ & layer 4, except the bottom left $2 \times 2$ grid & (ii)--(iv) \\
(vi) &  $t+c+2$  & layer 3, except the bottom left $2 \times 2$ grid & (iii)--(v), $(4,4,3)$ \\
(vii) &  $t+c+12$  & rows 3 and 4 of layers 1 and 2 & (i), (vi), $(2,2,3)$ \\
(viii) & $t+c+13$ & rest of rows $3$ and $4$ on all layers & (i), (vi)--(viii), $(2,4,1)$ \\
(ix) & $t+2c+12$ & rest of layers 1 and 2 & (viii), $(1,1,1)$, $(2,2,3)$ \\
\hline
\end{tabular}

\hspace{5mm} $~^*$-time step $t:=\frac{5}{3}(c-1)-8$

\newpage
\begin{const}
\label{const:44c4}
$(4,4,c)$ is perfect for all $c \equiv 4 \pmod{6}$, $c \ge 10$.
\end{const}

\input{4-4-28-tikz.txt}

\begin{tabular}{llll}
 & time step & region & reasons \\
\hline
(i) & 4 & row 3 of layer 3 \\
(ii) & 7 & rows 3 and 4 of layer 1,\\ 
&&~~except for (1,3,4), (1,4,4), (1,4,5) \\
(iii) & 7 & rows 1 to 3 of layer 4 \\
(iv) & 9 & columns 1 to 4 of layers 3 and 4 \\
(v) & 10 & columns 1 to 3 of layers 1 and 2 \\
(vi) & $c-2$ & row 3 of layer 2 & (i), (ii), $(2,3,c)$ \\
(vii) & $c+1$ & (1,3,4), (1,4,4), (1,4,5) & (ii), (v), (vi)\\
(viii) & $c+5$ & rows 1 and 2 of layer 3 & (i), (iii), (iv)\\
(ix) & $2c-3$ & rows 1 and 2 of layers 1 and 2 & (v)--(viii) \\
(x) & $2c-1$ & row 4 of layers 2 to 4 & (iii), (vi) \\
\hline
\end{tabular}


\vspace{1cm}

\begin{const}
\label{const:47c1}
$(4,7,c)$ is perfect for all $c \equiv 1 \pmod{6}$, $c \ge 7$.
\end{const}

\newpage
\input{4-7-25-tikz.txt}

\begin{tabular}{llll}
 & time step & region & reasons \\
\hline
(i) & 8 & row 7 of layer 3 \\
(ii) & 15 & layer 4, except for columns 1 and 2 & (i) \\
(iii) & $c+7$ & rows 5 and 6 of layer 3 & (i), (ii), $(3,5,1)$, $(3,6,2)$ \\
(iv) & $t^*$ & rows 5 to 7 of layers 1 and 2, & (iii), $(1,4,2)$, $(1,6,2)$,\\
&&~~except for columns 1, $c-1$, $c$ & ~~$(2,5,2)$, etc. \\
(v) & $t+3$ & rows 5 to 7 of layer 2, except column $1$ & (iii), (iv), $(2,7,c-1)$, $(2,4,c)$ \\
(vi) & $2c+8$ & layer 3, except for columns 1 and 2 & (ii), (iii), $(3,1,c)$  \\
(vii) & $t+c+2$ & layer 2, except for columns 1 and 2& (vi), $(2,2,c)$, $(2,4,c)$\\
(viii) & $t+c+8$ & rest of layers 2 to 4 & (ii), (vi), (vii), $(3,3,1)$\\
(ix) & $t+2c+14$ & rest of layer 1 & (iv), (viii), (1,1,1), $(1,1,c)$\\
\hline
\end{tabular}

\hspace{5mm} $~^*$-time step $t:=\frac{5}{3}(c-1)+4$

\newpage
\begin{const}
\label{const:47c4}
$(4,7,c)$ is perfect for all $c \equiv 4 \pmod{6}$, $c \ge 10$.
\end{const}

\input{4-7-28-tikz.txt}

\begin{tabular}{llll}
 & time step & region & reasons \\
\hline
(i) & 9 & columns 6 to $c$ of layer 1,
except for $(1,1,c)$ \\
(ii) & 17 & row 7 of each layer \\
(iii) & 22 & columns 1 to 5 of layers 1 to 4 \\
&&~~except for rows 1 and 2 of layer 4 \\
(iv) & $c+16$ & layers 2 and 3, except for $(2,1,c)$ and $(3,1,c)$ & (i)--(iii) \\
(v) & $c+16$ & rows 3 to 7 of layer 4 & (ii)--(iv) \\
(vi) & $c+19$ & the rest of layers 1 to 3 & (i), (iv), $(4,1,c)$\\
(vii) & $2c+17$ & the rest of layer 4 & (ii), (iii), (vi)\\
\hline
\end{tabular}

\newpage
\subsection{Small optimal constructions}
\label{app:optimal}

\begin{const}
\label{const:234}
$(2,3,4)$ is optimal.
\end{const}

\definecolor{4x3x2color1}{RGB}{255, 128, 128}
\definecolor{4x3x2color2}{RGB}{255, 149, 149}
\definecolor{4x3x2color3}{RGB}{255, 170, 170}
\definecolor{4x3x2color4}{RGB}{255, 191, 191}
\definecolor{4x3x2color5}{RGB}{255, 212, 212}
\definecolor{4x3x2color6}{RGB}{255, 255, 255}

\begin{center}
\begin{tikzpicture}[scale=0.5]
\begin{scope}[xshift=0cm,yshift=0cm]
\draw[very thick] (0,0) rectangle (4,-3);
\foreach \b in {0,...,2}
 \foreach \c in {0,...,3}
  \draw (\c,\b*-1) rectangle (\c+1,(\b*-1-1);

\draw[fill=red] (0,-0) rectangle (1,-1);
\draw[fill=4x3x2color3] (1,-0) rectangle (2,-1);
\node at (1.5,-0.5) {3};
\draw[fill=4x3x2color4] (2,-0) rectangle (3,-1);
\node at (2.5,-0.5) {4};
\draw[fill=4x3x2color5] (3,-0) rectangle (4,-1);
\node at (3.5,-0.5) {5};
\draw[fill=4x3x2color1] (0,-1) rectangle (1,-2);
\node at (0.5,-1.5) {1};
\draw[fill=4x3x2color2] (1,-1) rectangle (2,-2);
\node at (1.5,-1.5) {2};
\draw[fill=4x3x2color3] (2,-1) rectangle (3,-2);
\node at (2.5,-1.5) {3};
\draw[fill=red] (3,-1) rectangle (4,-2);
\draw[fill=red] (0,-2) rectangle (1,-3);
\draw[fill=4x3x2color1] (1,-2) rectangle (2,-3);
\node at (1.5,-2.5) {1};
\draw[fill=red] (2,-2) rectangle (3,-3);
\draw[fill=4x3x2color1] (3,-2) rectangle (4,-3);
\node at (3.5,-2.5) {1};
\end{scope}

\begin{scope}[xshift=5cm,yshift=0cm]
\draw[very thick] (0,0) rectangle (4,-3);
\foreach \b in {0,...,2}
 \foreach \c in {0,...,3}
  \draw (\c,\b*-1) rectangle (\c+1,(\b*-1-1);

\draw[fill=4x3x2color1] (0,-0) rectangle (1,-1);
\node at (0.5,-0.5) {1};
\draw[fill=red] (1,-0) rectangle (2,-1);
\draw[fill=4x3x2color3] (2,-0) rectangle (3,-1);
\node at (2.5,-0.5) {3};
\draw[fill=red] (3,-0) rectangle (4,-1);
\draw[fill=red] (0,-1) rectangle (1,-2);
\draw[fill=4x3x2color1] (1,-1) rectangle (2,-2);
\node at (1.5,-1.5) {1};
\draw[fill=4x3x2color2] (2,-1) rectangle (3,-2);
\node at (2.5,-1.5) {2};
\draw[fill=4x3x2color1] (3,-1) rectangle (4,-2);
\node at (3.5,-1.5) {1};
\draw[fill=4x3x2color1] (0,-2) rectangle (1,-3);
\node at (0.5,-2.5) {1};
\draw[fill=red] (1,-2) rectangle (2,-3);
\draw[fill=4x3x2color1] (2,-2) rectangle (3,-3);
\node at (2.5,-2.5) {1};
\draw[fill=red] (3,-2) rectangle (4,-3);
\end{scope}

\end{tikzpicture}
\end{center}

\begin{const}
\label{const:235}
$(2,3,5)$ is optimal.
\end{const}

\definecolor{5x3x2color1}{RGB}{255, 128, 128}
\definecolor{5x3x2color2}{RGB}{255, 143, 143}
\definecolor{5x3x2color3}{RGB}{255, 159, 159}
\definecolor{5x3x2color4}{RGB}{255, 175, 175}
\definecolor{5x3x2color5}{RGB}{255, 191, 191}
\definecolor{5x3x2color6}{RGB}{255, 207, 207}
\definecolor{5x3x2color7}{RGB}{255, 223, 223}
\definecolor{5x3x2color8}{RGB}{255, 255, 255}

\begin{center}
\begin{tikzpicture}[scale=0.5]
\begin{scope}[xshift=0cm,yshift=0cm]
\draw[very thick] (0,0) rectangle (5,-3);
\foreach \b in {0,...,2}
 \foreach \c in {0,...,4}
  \draw (\c,\b*-1) rectangle (\c+1,(\b*-1-1);

\draw[fill=red] (0,-0) rectangle (1,-1);
\draw[fill=5x3x2color3] (1,-0) rectangle (2,-1);
\node at (1.5,-0.5) {3};
\draw[fill=red] (2,-0) rectangle (3,-1);
\draw[fill=5x3x2color1] (3,-0) rectangle (4,-1);
\node at (3.5,-0.5) {1};
\draw[fill=red] (4,-0) rectangle (5,-1);
\draw[fill=5x3x2color1] (0,-1) rectangle (1,-2);
\node at (0.5,-1.5) {1};
\draw[fill=5x3x2color2] (1,-1) rectangle (2,-2);
\node at (1.5,-1.5) {2};
\draw[fill=5x3x2color1] (2,-1) rectangle (3,-2);
\node at (2.5,-1.5) {1};
\draw[fill=red] (3,-1) rectangle (4,-2);
\draw[fill=5x3x2color1] (4,-1) rectangle (5,-2);
\node at (4.5,-1.5) {1};
\draw[fill=red] (0,-2) rectangle (1,-3);
\draw[fill=5x3x2color1] (1,-2) rectangle (2,-3);
\node at (1.5,-2.5) {1};
\draw[fill=red] (2,-2) rectangle (3,-3);
\draw[fill=5x3x2color1] (3,-2) rectangle (4,-3);
\node at (3.5,-2.5) {1};
\draw[fill=red] (4,-2) rectangle (5,-3);
\end{scope}

\begin{scope}[xshift=6cm,yshift=0cm]
\draw[very thick] (0,0) rectangle (5,-3);
\foreach \b in {0,...,2}
 \foreach \c in {0,...,4}
  \draw (\c,\b*-1) rectangle (\c+1,(\b*-1-1);

\draw[fill=5x3x2color7] (0,-0) rectangle (1,-1);
\node at (0.5,-0.5) {7};
\draw[fill=5x3x2color6] (1,-0) rectangle (2,-1);
\node at (1.5,-0.5) {6};
\draw[fill=5x3x2color5] (2,-0) rectangle (3,-1);
\node at (2.5,-0.5) {5};
\draw[fill=red] (3,-0) rectangle (4,-1);
\draw[fill=5x3x2color1] (4,-0) rectangle (5,-1);
\node at (4.5,-0.5) {1};
\draw[fill=red] (0,-1) rectangle (1,-2);
\draw[fill=5x3x2color3] (1,-1) rectangle (2,-2);
\node at (1.5,-1.5) {3};
\draw[fill=5x3x2color4] (2,-1) rectangle (3,-2);
\node at (2.5,-1.5) {4};
\draw[fill=5x3x2color1] (3,-1) rectangle (4,-2);
\node at (3.5,-1.5) {1};
\draw[fill=red] (4,-1) rectangle (5,-2);
\draw[fill=5x3x2color1] (0,-2) rectangle (1,-3);
\node at (0.5,-2.5) {1};
\draw[fill=red] (1,-2) rectangle (2,-3);
\draw[fill=5x3x2color5] (2,-2) rectangle (3,-3);
\node at (2.5,-2.5) {5};
\draw[fill=5x3x2color6] (3,-2) rectangle (4,-3);
\node at (3.5,-2.5) {6};
\draw[fill=5x3x2color7] (4,-2) rectangle (5,-3);
\node at (4.5,-2.5) {7};
\end{scope}

\end{tikzpicture}
\end{center}

\begin{const}
\label{const:244}
$(2,4,4)$ is optimal.
\end{const}

\definecolor{4x4x2color1}{RGB}{255, 128, 128}
\definecolor{4x4x2color2}{RGB}{255, 143, 143}
\definecolor{4x4x2color3}{RGB}{255, 159, 159}
\definecolor{4x4x2color4}{RGB}{255, 175, 175}
\definecolor{4x4x2color5}{RGB}{255, 191, 191}
\definecolor{4x4x2color6}{RGB}{255, 207, 207}
\definecolor{4x4x2color7}{RGB}{255, 223, 223}
\definecolor{4x4x2color8}{RGB}{255, 255, 255}

\begin{center}
\begin{tikzpicture}[scale=0.5]
\begin{scope}[xshift=0cm,yshift=0cm]
\draw[very thick] (0,0) rectangle (4,-4);
\foreach \b in {0,...,3}
 \foreach \c in {0,...,3}
  \draw (\c,\b*-1) rectangle (\c+1,(\b*-1-1);

\draw[fill=red] (0,-0) rectangle (1,-1);
\draw[fill=4x4x2color3] (1,-0) rectangle (2,-1);
\node at (1.5,-0.5) {3};
\draw[fill=4x4x2color6] (2,-0) rectangle (3,-1);
\node at (2.5,-0.5) {6};
\draw[fill=4x4x2color7] (3,-0) rectangle (4,-1);
\node at (3.5,-0.5) {7};
\draw[fill=4x4x2color1] (0,-1) rectangle (1,-2);
\node at (0.5,-1.5) {1};
\draw[fill=4x4x2color2] (1,-1) rectangle (2,-2);
\node at (1.5,-1.5) {2};
\draw[fill=4x4x2color3] (2,-1) rectangle (3,-2);
\node at (2.5,-1.5) {3};
\draw[fill=red] (3,-1) rectangle (4,-2);
\draw[fill=red] (0,-2) rectangle (1,-3);
\draw[fill=4x4x2color1] (1,-2) rectangle (2,-3);
\node at (1.5,-2.5) {1};
\draw[fill=red] (2,-2) rectangle (3,-3);
\draw[fill=4x4x2color1] (3,-2) rectangle (4,-3);
\node at (3.5,-2.5) {1};
\draw[fill=4x4x2color3] (0,-3) rectangle (1,-4);
\node at (0.5,-3.5) {3};
\draw[fill=4x4x2color2] (1,-3) rectangle (2,-4);
\node at (1.5,-3.5) {2};
\draw[fill=4x4x2color1] (2,-3) rectangle (3,-4);
\node at (2.5,-3.5) {1};
\draw[fill=red] (3,-3) rectangle (4,-4);
\end{scope}

\begin{scope}[xshift=5cm,yshift=0cm]
\draw[very thick] (0,0) rectangle (4,-4);
\foreach \b in {0,...,3}
 \foreach \c in {0,...,3}
  \draw (\c,\b*-1) rectangle (\c+1,(\b*-1-1);

\draw[fill=4x4x2color1] (0,-0) rectangle (1,-1);
\node at (0.5,-0.5) {1};
\draw[fill=red] (1,-0) rectangle (2,-1);
\draw[fill=4x4x2color5] (2,-0) rectangle (3,-1);
\node at (2.5,-0.5) {5};
\draw[fill=red] (3,-0) rectangle (4,-1);
\draw[fill=red] (0,-1) rectangle (1,-2);
\draw[fill=4x4x2color1] (1,-1) rectangle (2,-2);
\node at (1.5,-1.5) {1};
\draw[fill=4x4x2color4] (2,-1) rectangle (3,-2);
\node at (2.5,-1.5) {4};
\draw[fill=4x4x2color5] (3,-1) rectangle (4,-2);
\node at (3.5,-1.5) {5};
\draw[fill=4x4x2color1] (0,-2) rectangle (1,-3);
\node at (0.5,-2.5) {1};
\draw[fill=red] (1,-2) rectangle (2,-3);
\draw[fill=4x4x2color1] (2,-2) rectangle (3,-3);
\node at (2.5,-2.5) {1};
\draw[fill=4x4x2color6] (3,-2) rectangle (4,-3);
\node at (3.5,-2.5) {6};
\draw[fill=red] (0,-3) rectangle (1,-4);
\draw[fill=4x4x2color1] (1,-3) rectangle (2,-4);
\node at (1.5,-3.5) {1};
\draw[fill=red] (2,-3) rectangle (3,-4);
\draw[fill=4x4x2color7] (3,-3) rectangle (4,-4);
\node at (3.5,-3.5) {7};
\end{scope}

\end{tikzpicture}
\end{center}

\begin{const}
\label{const:245}
$(2,4,5)$ is optimal.
\end{const}

\definecolor{5x4x2color1}{RGB}{255, 128, 128}
\definecolor{5x4x2color2}{RGB}{255, 138, 138}
\definecolor{5x4x2color3}{RGB}{255, 149, 149}
\definecolor{5x4x2color4}{RGB}{255, 159, 159}
\definecolor{5x4x2color5}{RGB}{255, 170, 170}
\definecolor{5x4x2color6}{RGB}{255, 180, 180}
\definecolor{5x4x2color7}{RGB}{255, 191, 191}
\definecolor{5x4x2color8}{RGB}{255, 202, 202}
\definecolor{5x4x2color9}{RGB}{255, 212, 212}
\definecolor{5x4x2color10}{RGB}{255, 223, 223}
\definecolor{5x4x2color11}{RGB}{255, 233, 233}
\definecolor{5x4x2color12}{RGB}{255, 255, 255}

\begin{center}
\begin{tikzpicture}[scale=0.5]
\begin{scope}[xshift=0cm,yshift=0cm]
\draw[very thick] (0,0) rectangle (5,-4);
\foreach \b in {0,...,3}
 \foreach \c in {0,...,4}
  \draw (\c,\b*-1) rectangle (\c+1,(\b*-1-1);

\draw[fill=red] (0,-0) rectangle (1,-1);
\draw[fill=5x4x2color5] (1,-0) rectangle (2,-1);
\node at (1.5,-0.5) {5};
\draw[fill=5x4x2color6] (2,-0) rectangle (3,-1);
\node at (2.5,-0.5) {6};
\draw[fill=5x4x2color7] (3,-0) rectangle (4,-1);
\node at (3.5,-0.5) {7};
\draw[fill=red] (4,-0) rectangle (5,-1);
\draw[fill=5x4x2color1] (0,-1) rectangle (1,-2);
\node at (0.5,-1.5) {1};
\draw[fill=5x4x2color4] (1,-1) rectangle (2,-2);
\node at (1.5,-1.5) {4};
\draw[fill=5x4x2color5] (2,-1) rectangle (3,-2);
\node at (2.5,-1.5) {5};
\draw[fill=5x4x2color8] (3,-1) rectangle (4,-2);
\node at (3.5,-1.5) {8};
\draw[fill=5x4x2color9] (4,-1) rectangle (5,-2);
\node at (4.5,-1.5) {9};
\draw[fill=red] (0,-2) rectangle (1,-3);
\draw[fill=5x4x2color3] (1,-2) rectangle (2,-3);
\node at (1.5,-2.5) {3};
\draw[fill=5x4x2color4] (2,-2) rectangle (3,-3);
\node at (2.5,-2.5) {4};
\draw[fill=5x4x2color5] (3,-2) rectangle (4,-3);
\node at (3.5,-2.5) {5};
\draw[fill=red] (4,-2) rectangle (5,-3);
\draw[fill=5x4x2color1] (0,-3) rectangle (1,-4);
\node at (0.5,-3.5) {1};
\draw[fill=red] (1,-3) rectangle (2,-4);
\draw[fill=5x4x2color1] (2,-3) rectangle (3,-4);
\node at (2.5,-3.5) {1};
\draw[fill=red] (3,-3) rectangle (4,-4);
\draw[fill=5x4x2color1] (4,-3) rectangle (5,-4);
\node at (4.5,-3.5) {1};
\end{scope}

\begin{scope}[xshift=6cm,yshift=0cm]
\draw[very thick] (0,0) rectangle (5,-4);
\foreach \b in {0,...,3}
 \foreach \c in {0,...,4}
  \draw (\c,\b*-1) rectangle (\c+1,(\b*-1-1);

\draw[fill=5x4x2color1] (0,-0) rectangle (1,-1);
\node at (0.5,-0.5) {1};
\draw[fill=red] (1,-0) rectangle (2,-1);
\draw[fill=5x4x2color1] (2,-0) rectangle (3,-1);
\node at (2.5,-0.5) {1};
\draw[fill=red] (3,-0) rectangle (4,-1);
\draw[fill=5x4x2color11] (4,-0) rectangle (5,-1);
\node at (4.5,-0.5) {11};
\draw[fill=red] (0,-1) rectangle (1,-2);
\draw[fill=5x4x2color1] (1,-1) rectangle (2,-2);
\node at (1.5,-1.5) {1};
\draw[fill=red] (2,-1) rectangle (3,-2);
\draw[fill=5x4x2color7] (3,-1) rectangle (4,-2);
\node at (3.5,-1.5) {7};
\draw[fill=5x4x2color10] (4,-1) rectangle (5,-2);
\node at (4.5,-1.5) {10};
\draw[fill=5x4x2color1] (0,-2) rectangle (1,-3);
\node at (0.5,-2.5) {1};
\draw[fill=5x4x2color2] (1,-2) rectangle (2,-3);
\node at (1.5,-2.5) {2};
\draw[fill=5x4x2color3] (2,-2) rectangle (3,-3);
\node at (2.5,-2.5) {3};
\draw[fill=5x4x2color6] (3,-2) rectangle (4,-3);
\node at (3.5,-2.5) {6};
\draw[fill=5x4x2color7] (4,-2) rectangle (5,-3);
\node at (4.5,-2.5) {7};
\draw[fill=red] (0,-3) rectangle (1,-4);
\draw[fill=5x4x2color1] (1,-3) rectangle (2,-4);
\node at (1.5,-3.5) {1};
\draw[fill=red] (2,-3) rectangle (3,-4);
\draw[fill=5x4x2color1] (3,-3) rectangle (4,-4);
\node at (3.5,-3.5) {1};
\draw[fill=red] (4,-3) rectangle (5,-4);
\end{scope}

\end{tikzpicture}
\end{center}

\begin{const}
\label{const:246}
$(2,4,6)$ is optimal.
\end{const}

\definecolor{6x4x2color1}{RGB}{255, 128, 128}
\definecolor{6x4x2color2}{RGB}{255, 140, 140}
\definecolor{6x4x2color3}{RGB}{255, 153, 153}
\definecolor{6x4x2color4}{RGB}{255, 166, 166}
\definecolor{6x4x2color5}{RGB}{255, 178, 178}
\definecolor{6x4x2color6}{RGB}{255, 191, 191}
\definecolor{6x4x2color7}{RGB}{255, 204, 204}
\definecolor{6x4x2color8}{RGB}{255, 216, 216}
\definecolor{6x4x2color9}{RGB}{255, 229, 229}
\definecolor{6x4x2color10}{RGB}{255, 255, 255}

\begin{center}
\begin{tikzpicture}[scale=0.5]
\begin{scope}[xshift=0cm,yshift=0cm]
\draw[very thick] (0,0) rectangle (6,-4);
\foreach \b in {0,...,3}
 \foreach \c in {0,...,5}
  \draw (\c,\b*-1) rectangle (\c+1,(\b*-1-1);

\draw[fill=red] (0,-0) rectangle (1,-1);
\draw[fill=6x4x2color1] (1,-0) rectangle (2,-1);
\node at (1.5,-0.5) {1};
\draw[fill=6x4x2color2] (2,-0) rectangle (3,-1);
\node at (2.5,-0.5) {2};
\draw[fill=6x4x2color3] (3,-0) rectangle (4,-1);
\node at (3.5,-0.5) {3};
\draw[fill=red] (4,-0) rectangle (5,-1);
\draw[fill=6x4x2color9] (5,-0) rectangle (6,-1);
\node at (5.5,-0.5) {9};
\draw[fill=6x4x2color1] (0,-1) rectangle (1,-2);
\node at (0.5,-1.5) {1};
\draw[fill=red] (1,-1) rectangle (2,-2);
\draw[fill=6x4x2color1] (2,-1) rectangle (3,-2);
\node at (2.5,-1.5) {1};
\draw[fill=6x4x2color4] (3,-1) rectangle (4,-2);
\node at (3.5,-1.5) {4};
\draw[fill=6x4x2color5] (4,-1) rectangle (5,-2);
\node at (4.5,-1.5) {5};
\draw[fill=6x4x2color8] (5,-1) rectangle (6,-2);
\node at (5.5,-1.5) {8};
\draw[fill=red] (0,-2) rectangle (1,-3);
\draw[fill=6x4x2color1] (1,-2) rectangle (2,-3);
\node at (1.5,-2.5) {1};
\draw[fill=red] (2,-2) rectangle (3,-3);
\draw[fill=6x4x2color1] (3,-2) rectangle (4,-3);
\node at (3.5,-2.5) {1};
\draw[fill=red] (4,-2) rectangle (5,-3);
\draw[fill=6x4x2color1] (5,-2) rectangle (6,-3);
\node at (5.5,-2.5) {1};
\draw[fill=6x4x2color5] (0,-3) rectangle (1,-4);
\node at (0.5,-3.5) {5};
\draw[fill=6x4x2color4] (1,-3) rectangle (2,-4);
\node at (1.5,-3.5) {4};
\draw[fill=6x4x2color1] (2,-3) rectangle (3,-4);
\node at (2.5,-3.5) {1};
\draw[fill=red] (3,-3) rectangle (4,-4);
\draw[fill=6x4x2color1] (4,-3) rectangle (5,-4);
\node at (4.5,-3.5) {1};
\draw[fill=red] (5,-3) rectangle (6,-4);
\end{scope}

\begin{scope}[xshift=7cm,yshift=0cm]
\draw[very thick] (0,0) rectangle (6,-4);
\foreach \b in {0,...,3}
 \foreach \c in {0,...,5}
  \draw (\c,\b*-1) rectangle (\c+1,(\b*-1-1);

\draw[fill=6x4x2color5] (0,-0) rectangle (1,-1);
\node at (0.5,-0.5) {5};
\draw[fill=red] (1,-0) rectangle (2,-1);
\draw[fill=6x4x2color1] (2,-0) rectangle (3,-1);
\node at (2.5,-0.5) {1};
\draw[fill=red] (3,-0) rectangle (4,-1);
\draw[fill=6x4x2color1] (4,-0) rectangle (5,-1);
\node at (4.5,-0.5) {1};
\draw[fill=red] (5,-0) rectangle (6,-1);
\draw[fill=6x4x2color4] (0,-1) rectangle (1,-2);
\node at (0.5,-1.5) {4};
\draw[fill=6x4x2color1] (1,-1) rectangle (2,-2);
\node at (1.5,-1.5) {1};
\draw[fill=red] (2,-1) rectangle (3,-2);
\draw[fill=6x4x2color5] (3,-1) rectangle (4,-2);
\node at (3.5,-1.5) {5};
\draw[fill=6x4x2color6] (4,-1) rectangle (5,-2);
\node at (4.5,-1.5) {6};
\draw[fill=6x4x2color7] (5,-1) rectangle (6,-2);
\node at (5.5,-1.5) {7};
\draw[fill=6x4x2color3] (0,-2) rectangle (1,-3);
\node at (0.5,-2.5) {3};
\draw[fill=6x4x2color2] (1,-2) rectangle (2,-3);
\node at (1.5,-2.5) {2};
\draw[fill=6x4x2color1] (2,-2) rectangle (3,-3);
\node at (2.5,-2.5) {1};
\draw[fill=6x4x2color6] (3,-2) rectangle (4,-3);
\node at (3.5,-2.5) {6};
\draw[fill=6x4x2color7] (4,-2) rectangle (5,-3);
\node at (4.5,-2.5) {7};
\draw[fill=red] (5,-2) rectangle (6,-3);
\draw[fill=red] (0,-3) rectangle (1,-4);
\draw[fill=6x4x2color3] (1,-3) rectangle (2,-4);
\node at (1.5,-3.5) {3};
\draw[fill=red] (2,-3) rectangle (3,-4);
\draw[fill=6x4x2color7] (3,-3) rectangle (4,-4);
\node at (3.5,-3.5) {7};
\draw[fill=6x4x2color8] (4,-3) rectangle (5,-4);
\node at (4.5,-3.5) {8};
\draw[fill=6x4x2color9] (5,-3) rectangle (6,-4);
\node at (5.5,-3.5) {9};
\end{scope}

\end{tikzpicture}
\end{center}

\begin{const}
\label{const:247}
$(2,4,7)$ is optimal.
\end{const}

\definecolor{7x4x2color1}{RGB}{255, 128, 128}
\definecolor{7x4x2color2}{RGB}{255, 138, 138}
\definecolor{7x4x2color3}{RGB}{255, 149, 149}
\definecolor{7x4x2color4}{RGB}{255, 159, 159}
\definecolor{7x4x2color5}{RGB}{255, 170, 170}
\definecolor{7x4x2color6}{RGB}{255, 180, 180}
\definecolor{7x4x2color7}{RGB}{255, 191, 191}
\definecolor{7x4x2color8}{RGB}{255, 202, 202}
\definecolor{7x4x2color9}{RGB}{255, 212, 212}
\definecolor{7x4x2color10}{RGB}{255, 223, 223}
\definecolor{7x4x2color11}{RGB}{255, 233, 233}
\definecolor{7x4x2color12}{RGB}{255, 255, 255}

\begin{center}
\begin{tikzpicture}[scale=0.5]
\begin{scope}[xshift=0cm,yshift=0cm]
\draw[very thick] (0,0) rectangle (7,-4);
\foreach \b in {0,...,3}
 \foreach \c in {0,...,6}
  \draw (\c,\b*-1) rectangle (\c+1,(\b*-1-1);

\draw[fill=red] (0,-0) rectangle (1,-1);
\draw[fill=7x4x2color9] (1,-0) rectangle (2,-1);
\node at (1.5,-0.5) {9};
\draw[fill=red] (2,-0) rectangle (3,-1);
\draw[fill=7x4x2color7] (3,-0) rectangle (4,-1);
\node at (3.5,-0.5) {7};
\draw[fill=red] (4,-0) rectangle (5,-1);
\draw[fill=7x4x2color1] (5,-0) rectangle (6,-1);
\node at (5.5,-0.5) {1};
\draw[fill=red] (6,-0) rectangle (7,-1);
\draw[fill=7x4x2color9] (0,-1) rectangle (1,-2);
\node at (0.5,-1.5) {9};
\draw[fill=7x4x2color8] (1,-1) rectangle (2,-2);
\node at (1.5,-1.5) {8};
\draw[fill=7x4x2color7] (2,-1) rectangle (3,-2);
\node at (2.5,-1.5) {7};
\draw[fill=7x4x2color6] (3,-1) rectangle (4,-2);
\node at (3.5,-1.5) {6};
\draw[fill=7x4x2color3] (4,-1) rectangle (5,-2);
\node at (4.5,-1.5) {3};
\draw[fill=7x4x2color2] (5,-1) rectangle (6,-2);
\node at (5.5,-1.5) {2};
\draw[fill=7x4x2color3] (6,-1) rectangle (7,-2);
\node at (6.5,-1.5) {3};
\draw[fill=7x4x2color10] (0,-2) rectangle (1,-3);
\node at (0.5,-2.5) {10};
\draw[fill=7x4x2color7] (1,-2) rectangle (2,-3);
\node at (1.5,-2.5) {7};
\draw[fill=7x4x2color6] (2,-2) rectangle (3,-3);
\node at (2.5,-2.5) {6};
\draw[fill=7x4x2color1] (3,-2) rectangle (4,-3);
\node at (3.5,-2.5) {1};
\draw[fill=red] (4,-2) rectangle (5,-3);
\draw[fill=7x4x2color1] (5,-2) rectangle (6,-3);
\node at (5.5,-2.5) {1};
\draw[fill=7x4x2color4] (6,-2) rectangle (7,-3);
\node at (6.5,-2.5) {4};
\draw[fill=7x4x2color11] (0,-3) rectangle (1,-4);
\node at (0.5,-3.5) {11};
\draw[fill=red] (1,-3) rectangle (2,-4);
\draw[fill=7x4x2color5] (2,-3) rectangle (3,-4);
\node at (2.5,-3.5) {5};
\draw[fill=red] (3,-3) rectangle (4,-4);
\draw[fill=7x4x2color1] (4,-3) rectangle (5,-4);
\node at (4.5,-3.5) {1};
\draw[fill=red] (5,-3) rectangle (6,-4);
\draw[fill=7x4x2color5] (6,-3) rectangle (7,-4);
\node at (6.5,-3.5) {5};
\end{scope}

\begin{scope}[xshift=8cm,yshift=0cm]
\draw[very thick] (0,0) rectangle (7,-4);
\foreach \b in {0,...,3}
 \foreach \c in {0,...,6}
  \draw (\c,\b*-1) rectangle (\c+1,(\b*-1-1);

\draw[fill=7x4x2color11] (0,-0) rectangle (1,-1);
\node at (0.5,-0.5) {11};
\draw[fill=7x4x2color10] (1,-0) rectangle (2,-1);
\node at (1.5,-0.5) {10};
\draw[fill=7x4x2color9] (2,-0) rectangle (3,-1);
\node at (2.5,-0.5) {9};
\draw[fill=7x4x2color8] (3,-0) rectangle (4,-1);
\node at (3.5,-0.5) {8};
\draw[fill=7x4x2color5] (4,-0) rectangle (5,-1);
\node at (4.5,-0.5) {5};
\draw[fill=red] (5,-0) rectangle (6,-1);
\draw[fill=7x4x2color1] (6,-0) rectangle (7,-1);
\node at (6.5,-0.5) {1};
\draw[fill=red] (0,-1) rectangle (1,-2);
\draw[fill=7x4x2color1] (1,-1) rectangle (2,-2);
\node at (1.5,-1.5) {1};
\draw[fill=red] (2,-1) rectangle (3,-2);
\draw[fill=7x4x2color5] (3,-1) rectangle (4,-2);
\node at (3.5,-1.5) {5};
\draw[fill=7x4x2color4] (4,-1) rectangle (5,-2);
\node at (4.5,-1.5) {4};
\draw[fill=7x4x2color1] (5,-1) rectangle (6,-2);
\node at (5.5,-1.5) {1};
\draw[fill=red] (6,-1) rectangle (7,-2);
\draw[fill=7x4x2color1] (0,-2) rectangle (1,-3);
\node at (0.5,-2.5) {1};
\draw[fill=red] (1,-2) rectangle (2,-3);
\draw[fill=7x4x2color1] (2,-2) rectangle (3,-3);
\node at (2.5,-2.5) {1};
\draw[fill=red] (3,-2) rectangle (4,-3);
\draw[fill=7x4x2color1] (4,-2) rectangle (5,-3);
\node at (4.5,-2.5) {1};
\draw[fill=red] (5,-2) rectangle (6,-3);
\draw[fill=7x4x2color1] (6,-2) rectangle (7,-3);
\node at (6.5,-2.5) {1};
\draw[fill=red] (0,-3) rectangle (1,-4);
\draw[fill=7x4x2color1] (1,-3) rectangle (2,-4);
\node at (1.5,-3.5) {1};
\draw[fill=7x4x2color4] (2,-3) rectangle (3,-4);
\node at (2.5,-3.5) {4};
\draw[fill=7x4x2color3] (3,-3) rectangle (4,-4);
\node at (3.5,-3.5) {3};
\draw[fill=7x4x2color2] (4,-3) rectangle (5,-4);
\node at (4.5,-3.5) {2};
\draw[fill=7x4x2color1] (5,-3) rectangle (6,-4);
\node at (5.5,-3.5) {1};
\draw[fill=red] (6,-3) rectangle (7,-4);
\end{scope}

\end{tikzpicture}
\end{center}

\begin{const}
\label{const:256}
$(2,5,6)$ is optimal.
\end{const}

\definecolor{6x5x2color1}{RGB}{255, 128, 128}
\definecolor{6x5x2color2}{RGB}{255, 140, 140}
\definecolor{6x5x2color3}{RGB}{255, 153, 153}
\definecolor{6x5x2color4}{RGB}{255, 166, 166}
\definecolor{6x5x2color5}{RGB}{255, 178, 178}
\definecolor{6x5x2color6}{RGB}{255, 191, 191}
\definecolor{6x5x2color7}{RGB}{255, 204, 204}
\definecolor{6x5x2color8}{RGB}{255, 216, 216}
\definecolor{6x5x2color9}{RGB}{255, 229, 229}
\definecolor{6x5x2color10}{RGB}{255, 255, 255}

\begin{center}
\begin{tikzpicture}[scale=0.5]
\begin{scope}[xshift=0cm,yshift=0cm]
\draw[very thick] (0,0) rectangle (6,-5);
\foreach \b in {0,...,4}
 \foreach \c in {0,...,5}
  \draw (\c,\b*-1) rectangle (\c+1,(\b*-1-1);

\draw[fill=red] (0,-0) rectangle (1,-1);
\draw[fill=6x5x2color5] (1,-0) rectangle (2,-1);
\node at (1.5,-0.5) {5};
\draw[fill=6x5x2color6] (2,-0) rectangle (3,-1);
\node at (2.5,-0.5) {6};
\draw[fill=6x5x2color7] (3,-0) rectangle (4,-1);
\node at (3.5,-0.5) {7};
\draw[fill=6x5x2color8] (4,-0) rectangle (5,-1);
\node at (4.5,-0.5) {8};
\draw[fill=6x5x2color9] (5,-0) rectangle (6,-1);
\node at (5.5,-0.5) {9};
\draw[fill=6x5x2color1] (0,-1) rectangle (1,-2);
\node at (0.5,-1.5) {1};
\draw[fill=6x5x2color4] (1,-1) rectangle (2,-2);
\node at (1.5,-1.5) {4};
\draw[fill=6x5x2color5] (2,-1) rectangle (3,-2);
\node at (2.5,-1.5) {5};
\draw[fill=6x5x2color6] (3,-1) rectangle (4,-2);
\node at (3.5,-1.5) {6};
\draw[fill=6x5x2color7] (4,-1) rectangle (5,-2);
\node at (4.5,-1.5) {7};
\draw[fill=red] (5,-1) rectangle (6,-2);
\draw[fill=red] (0,-2) rectangle (1,-3);
\draw[fill=6x5x2color3] (1,-2) rectangle (2,-3);
\node at (1.5,-2.5) {3};
\draw[fill=6x5x2color4] (2,-2) rectangle (3,-3);
\node at (2.5,-2.5) {4};
\draw[fill=6x5x2color5] (3,-2) rectangle (4,-3);
\node at (3.5,-2.5) {5};
\draw[fill=6x5x2color6] (4,-2) rectangle (5,-3);
\node at (4.5,-2.5) {6};
\draw[fill=6x5x2color7] (5,-2) rectangle (6,-3);
\node at (5.5,-2.5) {7};
\draw[fill=6x5x2color1] (0,-3) rectangle (1,-4);
\node at (0.5,-3.5) {1};
\draw[fill=6x5x2color2] (1,-3) rectangle (2,-4);
\node at (1.5,-3.5) {2};
\draw[fill=6x5x2color3] (2,-3) rectangle (3,-4);
\node at (2.5,-3.5) {3};
\draw[fill=6x5x2color4] (3,-3) rectangle (4,-4);
\node at (3.5,-3.5) {4};
\draw[fill=6x5x2color5] (4,-3) rectangle (5,-4);
\node at (4.5,-3.5) {5};
\draw[fill=6x5x2color8] (5,-3) rectangle (6,-4);
\node at (5.5,-3.5) {8};
\draw[fill=red] (0,-4) rectangle (1,-5);
\draw[fill=6x5x2color1] (1,-4) rectangle (2,-5);
\node at (1.5,-4.5) {1};
\draw[fill=red] (2,-4) rectangle (3,-5);
\draw[fill=6x5x2color3] (3,-4) rectangle (4,-5);
\node at (3.5,-4.5) {3};
\draw[fill=red] (4,-4) rectangle (5,-5);
\draw[fill=6x5x2color9] (5,-4) rectangle (6,-5);
\node at (5.5,-4.5) {9};
\end{scope}

\begin{scope}[xshift=7cm,yshift=0cm]
\draw[very thick] (0,0) rectangle (6,-5);
\foreach \b in {0,...,4}
 \foreach \c in {0,...,5}
  \draw (\c,\b*-1) rectangle (\c+1,(\b*-1-1);

\draw[fill=6x5x2color1] (0,-0) rectangle (1,-1);
\node at (0.5,-0.5) {1};
\draw[fill=red] (1,-0) rectangle (2,-1);
\draw[fill=6x5x2color1] (2,-0) rectangle (3,-1);
\node at (2.5,-0.5) {1};
\draw[fill=red] (3,-0) rectangle (4,-1);
\draw[fill=6x5x2color3] (4,-0) rectangle (5,-1);
\node at (4.5,-0.5) {3};
\draw[fill=red] (5,-0) rectangle (6,-1);
\draw[fill=red] (0,-1) rectangle (1,-2);
\draw[fill=6x5x2color1] (1,-1) rectangle (2,-2);
\node at (1.5,-1.5) {1};
\draw[fill=red] (2,-1) rectangle (3,-2);
\draw[fill=6x5x2color1] (3,-1) rectangle (4,-2);
\node at (3.5,-1.5) {1};
\draw[fill=6x5x2color2] (4,-1) rectangle (5,-2);
\node at (4.5,-1.5) {2};
\draw[fill=6x5x2color1] (5,-1) rectangle (6,-2);
\node at (5.5,-1.5) {1};
\draw[fill=6x5x2color1] (0,-2) rectangle (1,-3);
\node at (0.5,-2.5) {1};
\draw[fill=6x5x2color2] (1,-2) rectangle (2,-3);
\node at (1.5,-2.5) {2};
\draw[fill=6x5x2color1] (2,-2) rectangle (3,-3);
\node at (2.5,-2.5) {1};
\draw[fill=red] (3,-2) rectangle (4,-3);
\draw[fill=6x5x2color1] (4,-2) rectangle (5,-3);
\node at (4.5,-2.5) {1};
\draw[fill=red] (5,-2) rectangle (6,-3);
\draw[fill=red] (0,-3) rectangle (1,-4);
\draw[fill=6x5x2color1] (1,-3) rectangle (2,-4);
\node at (1.5,-3.5) {1};
\draw[fill=red] (2,-3) rectangle (3,-4);
\draw[fill=6x5x2color1] (3,-3) rectangle (4,-4);
\node at (3.5,-3.5) {1};
\draw[fill=red] (4,-3) rectangle (5,-4);
\draw[fill=6x5x2color1] (5,-3) rectangle (6,-4);
\node at (5.5,-3.5) {1};
\draw[fill=6x5x2color1] (0,-4) rectangle (1,-5);
\node at (0.5,-4.5) {1};
\draw[fill=red] (1,-4) rectangle (2,-5);
\draw[fill=6x5x2color1] (2,-4) rectangle (3,-5);
\node at (2.5,-4.5) {1};
\draw[fill=6x5x2color2] (3,-4) rectangle (4,-5);
\node at (3.5,-4.5) {2};
\draw[fill=6x5x2color1] (4,-4) rectangle (5,-5);
\node at (4.5,-4.5) {1};
\draw[fill=red] (5,-4) rectangle (6,-5);
\end{scope}

\end{tikzpicture}
\end{center}

\begin{const}
\label{const:257}
$(2,5,7)$ is optimal.
\end{const}

\definecolor{7x5x2color1}{RGB}{255, 128, 128}
\definecolor{7x5x2color2}{RGB}{255, 137, 137}
\definecolor{7x5x2color3}{RGB}{255, 146, 146}
\definecolor{7x5x2color4}{RGB}{255, 155, 155}
\definecolor{7x5x2color5}{RGB}{255, 164, 164}
\definecolor{7x5x2color6}{RGB}{255, 173, 173}
\definecolor{7x5x2color7}{RGB}{255, 182, 182}
\definecolor{7x5x2color8}{RGB}{255, 191, 191}
\definecolor{7x5x2color9}{RGB}{255, 200, 200}
\definecolor{7x5x2color10}{RGB}{255, 209, 209}
\definecolor{7x5x2color11}{RGB}{255, 218, 218}
\definecolor{7x5x2color12}{RGB}{255, 227, 227}
\definecolor{7x5x2color13}{RGB}{255, 236, 236}
\definecolor{7x5x2color14}{RGB}{255, 255, 255}

\begin{center}
\begin{tikzpicture}[scale=0.5]
\begin{scope}[xshift=0cm,yshift=0cm]
\draw[very thick] (0,0) rectangle (7,-5);
\foreach \b in {0,...,4}
 \foreach \c in {0,...,6}
  \draw (\c,\b*-1) rectangle (\c+1,(\b*-1-1);

\draw[fill=red] (0,-0) rectangle (1,-1);
\draw[fill=7x5x2color9] (1,-0) rectangle (2,-1);
\node at (1.5,-0.5) {9};
\draw[fill=7x5x2color8] (2,-0) rectangle (3,-1);
\node at (2.5,-0.5) {8};
\draw[fill=7x5x2color7] (3,-0) rectangle (4,-1);
\node at (3.5,-0.5) {7};
\draw[fill=red] (4,-0) rectangle (5,-1);
\draw[fill=7x5x2color3] (5,-0) rectangle (6,-1);
\node at (5.5,-0.5) {3};
\draw[fill=red] (6,-0) rectangle (7,-1);
\draw[fill=7x5x2color11] (0,-1) rectangle (1,-2);
\node at (0.5,-1.5) {11};
\draw[fill=7x5x2color10] (1,-1) rectangle (2,-2);
\node at (1.5,-1.5) {10};
\draw[fill=7x5x2color7] (2,-1) rectangle (3,-2);
\node at (2.5,-1.5) {7};
\draw[fill=7x5x2color6] (3,-1) rectangle (4,-2);
\node at (3.5,-1.5) {6};
\draw[fill=7x5x2color3] (4,-1) rectangle (5,-2);
\node at (4.5,-1.5) {3};
\draw[fill=7x5x2color2] (5,-1) rectangle (6,-2);
\node at (5.5,-1.5) {2};
\draw[fill=7x5x2color1] (6,-1) rectangle (7,-2);
\node at (6.5,-1.5) {1};
\draw[fill=red] (0,-2) rectangle (1,-3);
\draw[fill=7x5x2color3] (1,-2) rectangle (2,-3);
\node at (1.5,-2.5) {3};
\draw[fill=red] (2,-2) rectangle (3,-3);
\draw[fill=7x5x2color5] (3,-2) rectangle (4,-3);
\node at (3.5,-2.5) {5};
\draw[fill=7x5x2color4] (4,-2) rectangle (5,-3);
\node at (4.5,-2.5) {4};
\draw[fill=7x5x2color1] (5,-2) rectangle (6,-3);
\node at (5.5,-2.5) {1};
\draw[fill=red] (6,-2) rectangle (7,-3);
\draw[fill=7x5x2color1] (0,-3) rectangle (1,-4);
\node at (0.5,-3.5) {1};
\draw[fill=7x5x2color2] (1,-3) rectangle (2,-4);
\node at (1.5,-3.5) {2};
\draw[fill=7x5x2color1] (2,-3) rectangle (3,-4);
\node at (2.5,-3.5) {1};
\draw[fill=red] (3,-3) rectangle (4,-4);
\draw[fill=7x5x2color1] (4,-3) rectangle (5,-4);
\node at (4.5,-3.5) {1};
\draw[fill=red] (5,-3) rectangle (6,-4);
\draw[fill=7x5x2color1] (6,-3) rectangle (7,-4);
\node at (6.5,-3.5) {1};
\draw[fill=red] (0,-4) rectangle (1,-5);
\draw[fill=7x5x2color1] (1,-4) rectangle (2,-5);
\node at (1.5,-4.5) {1};
\draw[fill=red] (2,-4) rectangle (3,-5);
\draw[fill=7x5x2color1] (3,-4) rectangle (4,-5);
\node at (3.5,-4.5) {1};
\draw[fill=red] (4,-4) rectangle (5,-5);
\draw[fill=7x5x2color1] (5,-4) rectangle (6,-5);
\node at (5.5,-4.5) {1};
\draw[fill=red] (6,-4) rectangle (7,-5);
\end{scope}

\begin{scope}[xshift=8cm,yshift=0cm]
\draw[very thick] (0,0) rectangle (7,-5);
\foreach \b in {0,...,4}
 \foreach \c in {0,...,6}
  \draw (\c,\b*-1) rectangle (\c+1,(\b*-1-1);

\draw[fill=7x5x2color13] (0,-0) rectangle (1,-1);
\node at (0.5,-0.5) {13};
\draw[fill=red] (1,-0) rectangle (2,-1);
\draw[fill=7x5x2color1] (2,-0) rectangle (3,-1);
\node at (2.5,-0.5) {1};
\draw[fill=red] (3,-0) rectangle (4,-1);
\draw[fill=7x5x2color1] (4,-0) rectangle (5,-1);
\node at (4.5,-0.5) {1};
\draw[fill=7x5x2color4] (5,-0) rectangle (6,-1);
\node at (5.5,-0.5) {4};
\draw[fill=7x5x2color5] (6,-0) rectangle (7,-1);
\node at (6.5,-0.5) {5};
\draw[fill=7x5x2color12] (0,-1) rectangle (1,-2);
\node at (0.5,-1.5) {12};
\draw[fill=7x5x2color9] (1,-1) rectangle (2,-2);
\node at (1.5,-1.5) {9};
\draw[fill=red] (2,-1) rectangle (3,-2);
\draw[fill=7x5x2color1] (3,-1) rectangle (4,-2);
\node at (3.5,-1.5) {1};
\draw[fill=red] (4,-1) rectangle (5,-2);
\draw[fill=7x5x2color1] (5,-1) rectangle (6,-2);
\node at (5.5,-1.5) {1};
\draw[fill=red] (6,-1) rectangle (7,-2);
\draw[fill=7x5x2color9] (0,-2) rectangle (1,-3);
\node at (0.5,-2.5) {9};
\draw[fill=7x5x2color8] (1,-2) rectangle (2,-3);
\node at (1.5,-2.5) {8};
\draw[fill=7x5x2color7] (2,-2) rectangle (3,-3);
\node at (2.5,-2.5) {7};
\draw[fill=7x5x2color6] (3,-2) rectangle (4,-3);
\node at (3.5,-2.5) {6};
\draw[fill=7x5x2color5] (4,-2) rectangle (5,-3);
\node at (4.5,-2.5) {5};
\draw[fill=red] (5,-2) rectangle (6,-3);
\draw[fill=7x5x2color1] (6,-2) rectangle (7,-3);
\node at (6.5,-2.5) {1};
\draw[fill=red] (0,-3) rectangle (1,-4);
\draw[fill=7x5x2color3] (1,-3) rectangle (2,-4);
\node at (1.5,-3.5) {3};
\draw[fill=7x5x2color8] (2,-3) rectangle (3,-4);
\node at (2.5,-3.5) {8};
\draw[fill=7x5x2color9] (3,-3) rectangle (4,-4);
\node at (3.5,-3.5) {9};
\draw[fill=7x5x2color10] (4,-3) rectangle (5,-4);
\node at (4.5,-3.5) {10};
\draw[fill=7x5x2color11] (5,-3) rectangle (6,-4);
\node at (5.5,-3.5) {11};
\draw[fill=7x5x2color12] (6,-3) rectangle (7,-4);
\node at (6.5,-3.5) {12};
\draw[fill=7x5x2color1] (0,-4) rectangle (1,-5);
\node at (0.5,-4.5) {1};
\draw[fill=red] (1,-4) rectangle (2,-5);
\draw[fill=7x5x2color9] (2,-4) rectangle (3,-5);
\node at (2.5,-4.5) {9};
\draw[fill=7x5x2color10] (3,-4) rectangle (4,-5);
\node at (3.5,-4.5) {10};
\draw[fill=7x5x2color11] (4,-4) rectangle (5,-5);
\node at (4.5,-4.5) {11};
\draw[fill=7x5x2color12] (5,-4) rectangle (6,-5);
\node at (5.5,-4.5) {12};
\draw[fill=7x5x2color13] (6,-4) rectangle (7,-5);
\node at (6.5,-4.5) {13};
\end{scope}

\end{tikzpicture}
\end{center}

\begin{const}
\label{const:267}
$(2,6,7)$ is optimal.
\end{const}

\definecolor{7x6x2color1}{RGB}{255, 128, 128}
\definecolor{7x6x2color2}{RGB}{255, 133, 133}
\definecolor{7x6x2color3}{RGB}{255, 138, 138}
\definecolor{7x6x2color4}{RGB}{255, 143, 143}
\definecolor{7x6x2color5}{RGB}{255, 149, 149}
\definecolor{7x6x2color6}{RGB}{255, 154, 154}
\definecolor{7x6x2color7}{RGB}{255, 159, 159}
\definecolor{7x6x2color8}{RGB}{255, 165, 165}
\definecolor{7x6x2color9}{RGB}{255, 170, 170}
\definecolor{7x6x2color10}{RGB}{255, 175, 175}
\definecolor{7x6x2color11}{RGB}{255, 180, 180}
\definecolor{7x6x2color12}{RGB}{255, 186, 186}
\definecolor{7x6x2color13}{RGB}{255, 191, 191}
\definecolor{7x6x2color14}{RGB}{255, 196, 196}
\definecolor{7x6x2color15}{RGB}{255, 202, 202}
\definecolor{7x6x2color16}{RGB}{255, 207, 207}
\definecolor{7x6x2color17}{RGB}{255, 212, 212}
\definecolor{7x6x2color18}{RGB}{255, 217, 217}
\definecolor{7x6x2color19}{RGB}{255, 223, 223}
\definecolor{7x6x2color20}{RGB}{255, 228, 228}
\definecolor{7x6x2color21}{RGB}{255, 233, 233}
\definecolor{7x6x2color22}{RGB}{255, 239, 239}
\definecolor{7x6x2color23}{RGB}{255, 244, 244}
\definecolor{7x6x2color24}{RGB}{255, 255, 255}

\begin{center}
\begin{tikzpicture}[scale=0.5]
\begin{scope}[xshift=0cm,yshift=0cm]
\draw[very thick] (0,0) rectangle (7,-6);
\foreach \b in {0,...,5}
 \foreach \c in {0,...,6}
  \draw (\c,\b*-1) rectangle (\c+1,(\b*-1-1);

\draw[fill=red] (0,-0) rectangle (1,-1);
\draw[fill=7x6x2color19] (1,-0) rectangle (2,-1);
\node at (1.5,-0.5) {19};
\draw[fill=red] (2,-0) rectangle (3,-1);
\draw[fill=7x6x2color1] (3,-0) rectangle (4,-1);
\node at (3.5,-0.5) {1};
\draw[fill=red] (4,-0) rectangle (5,-1);
\draw[fill=7x6x2color5] (5,-0) rectangle (6,-1);
\node at (5.5,-0.5) {5};
\draw[fill=red] (6,-0) rectangle (7,-1);
\draw[fill=7x6x2color19] (0,-1) rectangle (1,-2);
\node at (0.5,-1.5) {19};
\draw[fill=7x6x2color18] (1,-1) rectangle (2,-2);
\node at (1.5,-1.5) {18};
\draw[fill=7x6x2color15] (2,-1) rectangle (3,-2);
\node at (2.5,-1.5) {15};
\draw[fill=7x6x2color6] (3,-1) rectangle (4,-2);
\node at (3.5,-1.5) {6};
\draw[fill=7x6x2color5] (4,-1) rectangle (5,-2);
\node at (4.5,-1.5) {5};
\draw[fill=7x6x2color4] (5,-1) rectangle (6,-2);
\node at (5.5,-1.5) {4};
\draw[fill=7x6x2color1] (6,-1) rectangle (7,-2);
\node at (6.5,-1.5) {1};
\draw[fill=7x6x2color20] (0,-2) rectangle (1,-3);
\node at (0.5,-2.5) {20};
\draw[fill=7x6x2color15] (1,-2) rectangle (2,-3);
\node at (1.5,-2.5) {15};
\draw[fill=7x6x2color14] (2,-2) rectangle (3,-3);
\node at (2.5,-2.5) {14};
\draw[fill=7x6x2color7] (3,-2) rectangle (4,-3);
\node at (3.5,-2.5) {7};
\draw[fill=7x6x2color6] (4,-2) rectangle (5,-3);
\node at (4.5,-2.5) {6};
\draw[fill=7x6x2color3] (5,-2) rectangle (6,-3);
\node at (5.5,-2.5) {3};
\draw[fill=red] (6,-2) rectangle (7,-3);
\draw[fill=7x6x2color21] (0,-3) rectangle (1,-4);
\node at (0.5,-3.5) {21};
\draw[fill=7x6x2color14] (1,-3) rectangle (2,-4);
\node at (1.5,-3.5) {14};
\draw[fill=7x6x2color13] (2,-3) rectangle (3,-4);
\node at (2.5,-3.5) {13};
\draw[fill=7x6x2color8] (3,-3) rectangle (4,-4);
\node at (3.5,-3.5) {8};
\draw[fill=7x6x2color7] (4,-3) rectangle (5,-4);
\node at (4.5,-3.5) {7};
\draw[fill=red] (5,-3) rectangle (6,-4);
\draw[fill=7x6x2color1] (6,-3) rectangle (7,-4);
\node at (6.5,-3.5) {1};
\draw[fill=7x6x2color22] (0,-4) rectangle (1,-5);
\node at (0.5,-4.5) {22};
\draw[fill=7x6x2color13] (1,-4) rectangle (2,-5);
\node at (1.5,-4.5) {13};
\draw[fill=7x6x2color12] (2,-4) rectangle (3,-5);
\node at (2.5,-4.5) {12};
\draw[fill=7x6x2color9] (3,-4) rectangle (4,-5);
\node at (3.5,-4.5) {9};
\draw[fill=7x6x2color8] (4,-4) rectangle (5,-5);
\node at (4.5,-4.5) {8};
\draw[fill=7x6x2color3] (5,-4) rectangle (6,-5);
\node at (5.5,-4.5) {3};
\draw[fill=7x6x2color4] (6,-4) rectangle (7,-5);
\node at (6.5,-4.5) {4};
\draw[fill=7x6x2color23] (0,-5) rectangle (1,-6);
\node at (0.5,-5.5) {23};
\draw[fill=red] (1,-5) rectangle (2,-6);
\draw[fill=7x6x2color11] (2,-5) rectangle (3,-6);
\node at (2.5,-5.5) {11};
\draw[fill=7x6x2color10] (3,-5) rectangle (4,-6);
\node at (3.5,-5.5) {10};
\draw[fill=7x6x2color9] (4,-5) rectangle (5,-6);
\node at (4.5,-5.5) {9};
\draw[fill=red] (5,-5) rectangle (6,-6);
\draw[fill=7x6x2color5] (6,-5) rectangle (7,-6);
\node at (6.5,-5.5) {5};
\end{scope}

\begin{scope}[xshift=8cm,yshift=0cm]
\draw[very thick] (0,0) rectangle (7,-6);
\foreach \b in {0,...,5}
 \foreach \c in {0,...,6}
  \draw (\c,\b*-1) rectangle (\c+1,(\b*-1-1);

\draw[fill=7x6x2color21] (0,-0) rectangle (1,-1);
\node at (0.5,-0.5) {21};
\draw[fill=7x6x2color20] (1,-0) rectangle (2,-1);
\node at (1.5,-0.5) {20};
\draw[fill=7x6x2color17] (2,-0) rectangle (3,-1);
\node at (2.5,-0.5) {17};
\draw[fill=red] (3,-0) rectangle (4,-1);
\draw[fill=7x6x2color1] (4,-0) rectangle (5,-1);
\node at (4.5,-0.5) {1};
\draw[fill=7x6x2color6] (5,-0) rectangle (6,-1);
\node at (5.5,-0.5) {6};
\draw[fill=7x6x2color7] (6,-0) rectangle (7,-1);
\node at (6.5,-0.5) {7};
\draw[fill=red] (0,-1) rectangle (1,-2);
\draw[fill=7x6x2color17] (1,-1) rectangle (2,-2);
\node at (1.5,-1.5) {17};
\draw[fill=7x6x2color16] (2,-1) rectangle (3,-2);
\node at (2.5,-1.5) {16};
\draw[fill=7x6x2color1] (3,-1) rectangle (4,-2);
\node at (3.5,-1.5) {1};
\draw[fill=red] (4,-1) rectangle (5,-2);
\draw[fill=7x6x2color3] (5,-1) rectangle (6,-2);
\node at (5.5,-1.5) {3};
\draw[fill=red] (6,-1) rectangle (7,-2);
\draw[fill=7x6x2color1] (0,-2) rectangle (1,-3);
\node at (0.5,-2.5) {1};
\draw[fill=red] (1,-2) rectangle (2,-3);
\draw[fill=7x6x2color3] (2,-2) rectangle (3,-3);
\node at (2.5,-2.5) {3};
\draw[fill=red] (3,-2) rectangle (4,-3);
\draw[fill=7x6x2color1] (4,-2) rectangle (5,-3);
\node at (4.5,-2.5) {1};
\draw[fill=7x6x2color2] (5,-2) rectangle (6,-3);
\node at (5.5,-2.5) {2};
\draw[fill=7x6x2color1] (6,-2) rectangle (7,-3);
\node at (6.5,-2.5) {1};
\draw[fill=red] (0,-3) rectangle (1,-4);
\draw[fill=7x6x2color1] (1,-3) rectangle (2,-4);
\node at (1.5,-3.5) {1};
\draw[fill=7x6x2color2] (2,-3) rectangle (3,-4);
\node at (2.5,-3.5) {2};
\draw[fill=7x6x2color1] (3,-3) rectangle (4,-4);
\node at (3.5,-3.5) {1};
\draw[fill=red] (4,-3) rectangle (5,-4);
\draw[fill=7x6x2color1] (5,-3) rectangle (6,-4);
\node at (5.5,-3.5) {1};
\draw[fill=red] (6,-3) rectangle (7,-4);
\draw[fill=7x6x2color1] (0,-4) rectangle (1,-5);
\node at (0.5,-4.5) {1};
\draw[fill=red] (1,-4) rectangle (2,-5);
\draw[fill=7x6x2color1] (2,-4) rectangle (3,-5);
\node at (2.5,-4.5) {1};
\draw[fill=red] (3,-4) rectangle (4,-5);
\draw[fill=7x6x2color1] (4,-4) rectangle (5,-5);
\node at (4.5,-4.5) {1};
\draw[fill=7x6x2color2] (5,-4) rectangle (6,-5);
\node at (5.5,-4.5) {2};
\draw[fill=7x6x2color3] (6,-4) rectangle (7,-5);
\node at (6.5,-4.5) {3};
\draw[fill=red] (0,-5) rectangle (1,-6);
\draw[fill=7x6x2color1] (1,-5) rectangle (2,-6);
\node at (1.5,-5.5) {1};
\draw[fill=red] (2,-5) rectangle (3,-6);
\draw[fill=7x6x2color1] (3,-5) rectangle (4,-6);
\node at (3.5,-5.5) {1};
\draw[fill=red] (4,-5) rectangle (5,-6);
\draw[fill=7x6x2color1] (5,-5) rectangle (6,-6);
\node at (5.5,-5.5) {1};
\draw[fill=red] (6,-5) rectangle (7,-6);
\end{scope}

\end{tikzpicture}
\end{center} 

\begin{const}
\label{const:344}
$(3,4,4)$ is optimal.
\end{const}

\definecolor{4x4x3color1}{RGB}{255, 128, 128}
\definecolor{4x4x3color2}{RGB}{255, 140, 140}
\definecolor{4x4x3color3}{RGB}{255, 153, 153}
\definecolor{4x4x3color4}{RGB}{255, 166, 166}
\definecolor{4x4x3color5}{RGB}{255, 178, 178}
\definecolor{4x4x3color6}{RGB}{255, 191, 191}
\definecolor{4x4x3color7}{RGB}{255, 204, 204}
\definecolor{4x4x3color8}{RGB}{255, 216, 216}
\definecolor{4x4x3color9}{RGB}{255, 229, 229}
\definecolor{4x4x3color10}{RGB}{255, 255, 255}

\begin{center}
\begin{tikzpicture}[scale=0.5]
\begin{scope}[xshift=0cm,yshift=0cm]
\draw[very thick] (0,0) rectangle (4,-4);
\foreach \b in {0,...,3}
 \foreach \c in {0,...,3}
  \draw (\c,\b*-1) rectangle (\c+1,(\b*-1-1);

\draw[fill=red] (0,-0) rectangle (1,-1);
\draw[fill=4x4x3color1] (1,-0) rectangle (2,-1);
\node at (1.5,-0.5) {1};
\draw[fill=4x4x3color2] (2,-0) rectangle (3,-1);
\node at (2.5,-0.5) {2};
\draw[fill=4x4x3color3] (3,-0) rectangle (4,-1);
\node at (3.5,-0.5) {3};
\draw[fill=4x4x3color1] (0,-1) rectangle (1,-2);
\node at (0.5,-1.5) {1};
\draw[fill=red] (1,-1) rectangle (2,-2);
\draw[fill=4x4x3color1] (2,-1) rectangle (3,-2);
\node at (2.5,-1.5) {1};
\draw[fill=red] (3,-1) rectangle (4,-2);
\draw[fill=4x4x3color8] (0,-2) rectangle (1,-3);
\node at (0.5,-2.5) {8};
\draw[fill=4x4x3color7] (1,-2) rectangle (2,-3);
\node at (1.5,-2.5) {7};
\draw[fill=4x4x3color6] (2,-2) rectangle (3,-3);
\node at (2.5,-2.5) {6};
\draw[fill=4x4x3color5] (3,-2) rectangle (4,-3);
\node at (3.5,-2.5) {5};
\draw[fill=4x4x3color9] (0,-3) rectangle (1,-4);
\node at (0.5,-3.5) {9};
\draw[fill=4x4x3color8] (1,-3) rectangle (2,-4);
\node at (1.5,-3.5) {8};
\draw[fill=4x4x3color7] (2,-3) rectangle (3,-4);
\node at (2.5,-3.5) {7};
\draw[fill=red] (3,-3) rectangle (4,-4);
\end{scope}

\begin{scope}[xshift=5cm,yshift=0cm]
\draw[very thick] (0,0) rectangle (4,-4);
\foreach \b in {0,...,3}
 \foreach \c in {0,...,3}
  \draw (\c,\b*-1) rectangle (\c+1,(\b*-1-1);

\draw[fill=4x4x3color1] (0,-0) rectangle (1,-1);
\node at (0.5,-0.5) {1};
\draw[fill=red] (1,-0) rectangle (2,-1);
\draw[fill=4x4x3color1] (2,-0) rectangle (3,-1);
\node at (2.5,-0.5) {1};
\draw[fill=red] (3,-0) rectangle (4,-1);
\draw[fill=red] (0,-1) rectangle (1,-2);
\draw[fill=4x4x3color1] (1,-1) rectangle (2,-2);
\node at (1.5,-1.5) {1};
\draw[fill=red] (2,-1) rectangle (3,-2);
\draw[fill=4x4x3color1] (3,-1) rectangle (4,-2);
\node at (3.5,-1.5) {1};
\draw[fill=4x4x3color1] (0,-2) rectangle (1,-3);
\node at (0.5,-2.5) {1};
\draw[fill=4x4x3color2] (1,-2) rectangle (2,-3);
\node at (1.5,-2.5) {2};
\draw[fill=4x4x3color3] (2,-2) rectangle (3,-3);
\node at (2.5,-2.5) {3};
\draw[fill=4x4x3color4] (3,-2) rectangle (4,-3);
\node at (3.5,-2.5) {4};
\draw[fill=red] (0,-3) rectangle (1,-4);
\draw[fill=4x4x3color1] (1,-3) rectangle (2,-4);
\node at (1.5,-3.5) {1};
\draw[fill=red] (2,-3) rectangle (3,-4);
\draw[fill=4x4x3color1] (3,-3) rectangle (4,-4);
\node at (3.5,-3.5) {1};
\end{scope}

\begin{scope}[xshift=10cm,yshift=0cm]
\draw[very thick] (0,0) rectangle (4,-4);
\foreach \b in {0,...,3}
 \foreach \c in {0,...,3}
  \draw (\c,\b*-1) rectangle (\c+1,(\b*-1-1);

\draw[fill=red] (0,-0) rectangle (1,-1);
\draw[fill=4x4x3color5] (1,-0) rectangle (2,-1);
\node at (1.5,-0.5) {5};
\draw[fill=4x4x3color6] (2,-0) rectangle (3,-1);
\node at (2.5,-0.5) {6};
\draw[fill=4x4x3color7] (3,-0) rectangle (4,-1);
\node at (3.5,-0.5) {7};
\draw[fill=4x4x3color1] (0,-1) rectangle (1,-2);
\node at (0.5,-1.5) {1};
\draw[fill=4x4x3color4] (1,-1) rectangle (2,-2);
\node at (1.5,-1.5) {4};
\draw[fill=4x4x3color5] (2,-1) rectangle (3,-2);
\node at (2.5,-1.5) {5};
\draw[fill=4x4x3color6] (3,-1) rectangle (4,-2);
\node at (3.5,-1.5) {6};
\draw[fill=red] (0,-2) rectangle (1,-3);
\draw[fill=4x4x3color3] (1,-2) rectangle (2,-3);
\node at (1.5,-2.5) {3};
\draw[fill=4x4x3color4] (2,-2) rectangle (3,-3);
\node at (2.5,-2.5) {4};
\draw[fill=4x4x3color5] (3,-2) rectangle (4,-3);
\node at (3.5,-2.5) {5};
\draw[fill=4x4x3color1] (0,-3) rectangle (1,-4);
\node at (0.5,-3.5) {1};
\draw[fill=red] (1,-3) rectangle (2,-4);
\draw[fill=4x4x3color1] (2,-3) rectangle (3,-4);
\node at (2.5,-3.5) {1};
\draw[fill=red] (3,-3) rectangle (4,-4);
\end{scope}

\end{tikzpicture}
\end{center}

\begin{const}
\label{const:345}
$(3,4,5)$ is optimal.
\end{const}

\definecolor{5x4x3color1}{RGB}{255, 128, 128}
\definecolor{5x4x3color2}{RGB}{255, 136, 136}
\definecolor{5x4x3color3}{RGB}{255, 144, 144}
\definecolor{5x4x3color4}{RGB}{255, 153, 153}
\definecolor{5x4x3color5}{RGB}{255, 161, 161}
\definecolor{5x4x3color6}{RGB}{255, 170, 170}
\definecolor{5x4x3color7}{RGB}{255, 178, 178}
\definecolor{5x4x3color8}{RGB}{255, 187, 187}
\definecolor{5x4x3color9}{RGB}{255, 195, 195}
\definecolor{5x4x3color10}{RGB}{255, 204, 204}
\definecolor{5x4x3color11}{RGB}{255, 212, 212}
\definecolor{5x4x3color12}{RGB}{255, 221, 221}
\definecolor{5x4x3color13}{RGB}{255, 229, 229}
\definecolor{5x4x3color14}{RGB}{255, 238, 238}
\definecolor{5x4x3color15}{RGB}{255, 255, 255}

\begin{center}
\begin{tikzpicture}[scale=0.5]
\begin{scope}[xshift=0cm,yshift=0cm]
\draw[very thick] (0,0) rectangle (5,-4);
\foreach \b in {0,...,3}
 \foreach \c in {0,...,4}
  \draw (\c,\b*-1) rectangle (\c+1,(\b*-1-1);

\draw[fill=red] (0,-0) rectangle (1,-1);
\draw[fill=5x4x3color11] (1,-0) rectangle (2,-1);
\node at (1.5,-0.5) {11};
\draw[fill=5x4x3color10] (2,-0) rectangle (3,-1);
\node at (2.5,-0.5) {10};
\draw[fill=5x4x3color9] (3,-0) rectangle (4,-1);
\node at (3.5,-0.5) {9};
\draw[fill=red] (4,-0) rectangle (5,-1);
\draw[fill=5x4x3color9] (0,-1) rectangle (1,-2);
\node at (0.5,-1.5) {9};
\draw[fill=5x4x3color8] (1,-1) rectangle (2,-2);
\node at (1.5,-1.5) {8};
\draw[fill=5x4x3color7] (2,-1) rectangle (3,-2);
\node at (2.5,-1.5) {7};
\draw[fill=5x4x3color6] (3,-1) rectangle (4,-2);
\node at (3.5,-1.5) {6};
\draw[fill=5x4x3color1] (4,-1) rectangle (5,-2);
\node at (4.5,-1.5) {1};
\draw[fill=5x4x3color10] (0,-2) rectangle (1,-3);
\node at (0.5,-2.5) {10};
\draw[fill=5x4x3color3] (1,-2) rectangle (2,-3);
\node at (1.5,-2.5) {3};
\draw[fill=red] (2,-2) rectangle (3,-3);
\draw[fill=5x4x3color5] (3,-2) rectangle (4,-3);
\node at (3.5,-2.5) {5};
\draw[fill=red] (4,-2) rectangle (5,-3);
\draw[fill=5x4x3color11] (0,-3) rectangle (1,-4);
\node at (0.5,-3.5) {11};
\draw[fill=red] (1,-3) rectangle (2,-4);
\draw[fill=5x4x3color3] (2,-3) rectangle (3,-4);
\node at (2.5,-3.5) {3};
\draw[fill=5x4x3color6] (3,-3) rectangle (4,-4);
\node at (3.5,-3.5) {6};
\draw[fill=5x4x3color7] (4,-3) rectangle (5,-4);
\node at (4.5,-3.5) {7};
\end{scope}

\begin{scope}[xshift=6cm,yshift=0cm]
\draw[very thick] (0,0) rectangle (5,-4);
\foreach \b in {0,...,3}
 \foreach \c in {0,...,4}
  \draw (\c,\b*-1) rectangle (\c+1,(\b*-1-1);

\draw[fill=5x4x3color13] (0,-0) rectangle (1,-1);
\node at (0.5,-0.5) {13};
\draw[fill=5x4x3color12] (1,-0) rectangle (2,-1);
\node at (1.5,-0.5) {12};
\draw[fill=5x4x3color9] (2,-0) rectangle (3,-1);
\node at (2.5,-0.5) {9};
\draw[fill=5x4x3color8] (3,-0) rectangle (4,-1);
\node at (3.5,-0.5) {8};
\draw[fill=5x4x3color1] (4,-0) rectangle (5,-1);
\node at (4.5,-0.5) {1};
\draw[fill=red] (0,-1) rectangle (1,-2);
\draw[fill=5x4x3color3] (1,-1) rectangle (2,-2);
\node at (1.5,-1.5) {3};
\draw[fill=red] (2,-1) rectangle (3,-2);
\draw[fill=5x4x3color5] (3,-1) rectangle (4,-2);
\node at (3.5,-1.5) {5};
\draw[fill=red] (4,-1) rectangle (5,-2);
\draw[fill=5x4x3color1] (0,-2) rectangle (1,-3);
\node at (0.5,-2.5) {1};
\draw[fill=5x4x3color2] (1,-2) rectangle (2,-3);
\node at (1.5,-2.5) {2};
\draw[fill=5x4x3color1] (2,-2) rectangle (3,-3);
\node at (2.5,-2.5) {1};
\draw[fill=5x4x3color4] (3,-2) rectangle (4,-3);
\node at (3.5,-2.5) {4};
\draw[fill=5x4x3color1] (4,-2) rectangle (5,-3);
\node at (4.5,-2.5) {1};
\draw[fill=red] (0,-3) rectangle (1,-4);
\draw[fill=5x4x3color1] (1,-3) rectangle (2,-4);
\node at (1.5,-3.5) {1};
\draw[fill=5x4x3color2] (2,-3) rectangle (3,-4);
\node at (2.5,-3.5) {2};
\draw[fill=5x4x3color3] (3,-3) rectangle (4,-4);
\node at (3.5,-3.5) {3};
\draw[fill=red] (4,-3) rectangle (5,-4);
\end{scope}

\begin{scope}[xshift=12cm,yshift=0cm]
\draw[very thick] (0,0) rectangle (5,-4);
\foreach \b in {0,...,3}
 \foreach \c in {0,...,4}
  \draw (\c,\b*-1) rectangle (\c+1,(\b*-1-1);

\draw[fill=5x4x3color14] (0,-0) rectangle (1,-1);
\node at (0.5,-0.5) {14};
\draw[fill=5x4x3color13] (1,-0) rectangle (2,-1);
\node at (1.5,-0.5) {13};
\draw[fill=red] (2,-0) rectangle (3,-1);
\draw[fill=5x4x3color7] (3,-0) rectangle (4,-1);
\node at (3.5,-0.5) {7};
\draw[fill=red] (4,-0) rectangle (5,-1);
\draw[fill=5x4x3color5] (0,-1) rectangle (1,-2);
\node at (0.5,-1.5) {5};
\draw[fill=5x4x3color4] (1,-1) rectangle (2,-2);
\node at (1.5,-1.5) {4};
\draw[fill=5x4x3color1] (2,-1) rectangle (3,-2);
\node at (2.5,-1.5) {1};
\draw[fill=5x4x3color6] (3,-1) rectangle (4,-2);
\node at (3.5,-1.5) {6};
\draw[fill=5x4x3color7] (4,-1) rectangle (5,-2);
\node at (4.5,-1.5) {7};
\draw[fill=red] (0,-2) rectangle (1,-3);
\draw[fill=5x4x3color1] (1,-2) rectangle (2,-3);
\node at (1.5,-2.5) {1};
\draw[fill=red] (2,-2) rectangle (3,-3);
\draw[fill=5x4x3color5] (3,-2) rectangle (4,-3);
\node at (3.5,-2.5) {5};
\draw[fill=5x4x3color8] (4,-2) rectangle (5,-3);
\node at (4.5,-2.5) {8};
\draw[fill=5x4x3color1] (0,-3) rectangle (1,-4);
\node at (0.5,-3.5) {1};
\draw[fill=red] (1,-3) rectangle (2,-4);
\draw[fill=5x4x3color1] (2,-3) rectangle (3,-4);
\node at (2.5,-3.5) {1};
\draw[fill=red] (3,-3) rectangle (4,-4);
\draw[fill=5x4x3color9] (4,-3) rectangle (5,-4);
\node at (4.5,-3.5) {9};
\end{scope}

\end{tikzpicture}
\end{center}

\begin{const}
\label{const:347}
$(3,4,7)$ is optimal.
\end{const}

\definecolor{7x4x3color1}{RGB}{255, 128, 128}
\definecolor{7x4x3color2}{RGB}{255, 134, 134}
\definecolor{7x4x3color3}{RGB}{255, 141, 141}
\definecolor{7x4x3color4}{RGB}{255, 148, 148}
\definecolor{7x4x3color5}{RGB}{255, 154, 154}
\definecolor{7x4x3color6}{RGB}{255, 161, 161}
\definecolor{7x4x3color7}{RGB}{255, 168, 168}
\definecolor{7x4x3color8}{RGB}{255, 174, 174}
\definecolor{7x4x3color9}{RGB}{255, 181, 181}
\definecolor{7x4x3color10}{RGB}{255, 188, 188}
\definecolor{7x4x3color11}{RGB}{255, 194, 194}
\definecolor{7x4x3color12}{RGB}{255, 201, 201}
\definecolor{7x4x3color13}{RGB}{255, 208, 208}
\definecolor{7x4x3color14}{RGB}{255, 214, 214}
\definecolor{7x4x3color15}{RGB}{255, 221, 221}
\definecolor{7x4x3color16}{RGB}{255, 228, 228}
\definecolor{7x4x3color17}{RGB}{255, 234, 234}
\definecolor{7x4x3color18}{RGB}{255, 241, 241}
\definecolor{7x4x3color19}{RGB}{255, 255, 255}

\begin{center}
\begin{tikzpicture}[scale=0.5]
\begin{scope}[xshift=0cm,yshift=0cm]
\draw[very thick] (0,0) rectangle (7,-4);
\foreach \b in {0,...,3}
 \foreach \c in {0,...,6}
  \draw (\c,\b*-1) rectangle (\c+1,(\b*-1-1);

\draw[fill=red] (0,-0) rectangle (1,-1);
\draw[fill=7x4x3color15] (1,-0) rectangle (2,-1);
\node at (1.5,-0.5) {15};
\draw[fill=7x4x3color14] (2,-0) rectangle (3,-1);
\node at (2.5,-0.5) {14};
\draw[fill=7x4x3color13] (3,-0) rectangle (4,-1);
\node at (3.5,-0.5) {13};
\draw[fill=red] (4,-0) rectangle (5,-1);
\draw[fill=7x4x3color1] (5,-0) rectangle (6,-1);
\node at (5.5,-0.5) {1};
\draw[fill=red] (6,-0) rectangle (7,-1);
\draw[fill=7x4x3color11] (0,-1) rectangle (1,-2);
\node at (0.5,-1.5) {11};
\draw[fill=7x4x3color10] (1,-1) rectangle (2,-2);
\node at (1.5,-1.5) {10};
\draw[fill=7x4x3color7] (2,-1) rectangle (3,-2);
\node at (2.5,-1.5) {7};
\draw[fill=7x4x3color6] (3,-1) rectangle (4,-2);
\node at (3.5,-1.5) {6};
\draw[fill=7x4x3color3] (4,-1) rectangle (5,-2);
\node at (4.5,-1.5) {3};
\draw[fill=7x4x3color2] (5,-1) rectangle (6,-2);
\node at (5.5,-1.5) {2};
\draw[fill=7x4x3color1] (6,-1) rectangle (7,-2);
\node at (6.5,-1.5) {1};
\draw[fill=7x4x3color12] (0,-2) rectangle (1,-3);
\node at (0.5,-2.5) {12};
\draw[fill=7x4x3color9] (1,-2) rectangle (2,-3);
\node at (1.5,-2.5) {9};
\draw[fill=red] (2,-2) rectangle (3,-3);
\draw[fill=7x4x3color1] (3,-2) rectangle (4,-3);
\node at (3.5,-2.5) {1};
\draw[fill=red] (4,-2) rectangle (5,-3);
\draw[fill=7x4x3color1] (5,-2) rectangle (6,-3);
\node at (5.5,-2.5) {1};
\draw[fill=red] (6,-2) rectangle (7,-3);
\draw[fill=7x4x3color13] (0,-3) rectangle (1,-4);
\node at (0.5,-3.5) {13};
\draw[fill=red] (1,-3) rectangle (2,-4);
\draw[fill=7x4x3color1] (2,-3) rectangle (3,-4);
\node at (2.5,-3.5) {1};
\draw[fill=red] (3,-3) rectangle (4,-4);
\draw[fill=7x4x3color1] (4,-3) rectangle (5,-4);
\node at (4.5,-3.5) {1};
\draw[fill=red] (5,-3) rectangle (6,-4);
\draw[fill=7x4x3color1] (6,-3) rectangle (7,-4);
\node at (6.5,-3.5) {1};
\end{scope}

\begin{scope}[xshift=8cm,yshift=0cm]
\draw[very thick] (0,0) rectangle (7,-4);
\foreach \b in {0,...,3}
 \foreach \c in {0,...,6}
  \draw (\c,\b*-1) rectangle (\c+1,(\b*-1-1);

\draw[fill=7x4x3color17] (0,-0) rectangle (1,-1);
\node at (0.5,-0.5) {17};
\draw[fill=7x4x3color16] (1,-0) rectangle (2,-1);
\node at (1.5,-0.5) {16};
\draw[fill=7x4x3color13] (2,-0) rectangle (3,-1);
\node at (2.5,-0.5) {13};
\draw[fill=7x4x3color12] (3,-0) rectangle (4,-1);
\node at (3.5,-0.5) {12};
\draw[fill=7x4x3color5] (4,-0) rectangle (5,-1);
\node at (4.5,-0.5) {5};
\draw[fill=red] (5,-0) rectangle (6,-1);
\draw[fill=7x4x3color1] (6,-0) rectangle (7,-1);
\node at (6.5,-0.5) {1};
\draw[fill=red] (0,-1) rectangle (1,-2);
\draw[fill=7x4x3color9] (1,-1) rectangle (2,-2);
\node at (1.5,-1.5) {9};
\draw[fill=red] (2,-1) rectangle (3,-2);
\draw[fill=7x4x3color5] (3,-1) rectangle (4,-2);
\node at (3.5,-1.5) {5};
\draw[fill=7x4x3color4] (4,-1) rectangle (5,-2);
\node at (4.5,-1.5) {4};
\draw[fill=7x4x3color3] (5,-1) rectangle (6,-2);
\node at (5.5,-1.5) {3};
\draw[fill=red] (6,-1) rectangle (7,-2);
\draw[fill=7x4x3color1] (0,-2) rectangle (1,-3);
\node at (0.5,-2.5) {1};
\draw[fill=7x4x3color8] (1,-2) rectangle (2,-3);
\node at (1.5,-2.5) {8};
\draw[fill=7x4x3color1] (2,-2) rectangle (3,-3);
\node at (2.5,-2.5) {1};
\draw[fill=7x4x3color4] (3,-2) rectangle (4,-3);
\node at (3.5,-2.5) {4};
\draw[fill=7x4x3color3] (4,-2) rectangle (5,-3);
\node at (4.5,-2.5) {3};
\draw[fill=7x4x3color2] (5,-2) rectangle (6,-3);
\node at (5.5,-2.5) {2};
\draw[fill=7x4x3color1] (6,-2) rectangle (7,-3);
\node at (6.5,-2.5) {1};
\draw[fill=red] (0,-3) rectangle (1,-4);
\draw[fill=7x4x3color7] (1,-3) rectangle (2,-4);
\node at (1.5,-3.5) {7};
\draw[fill=7x4x3color6] (2,-3) rectangle (3,-4);
\node at (2.5,-3.5) {6};
\draw[fill=7x4x3color5] (3,-3) rectangle (4,-4);
\node at (3.5,-3.5) {5};
\draw[fill=7x4x3color4] (4,-3) rectangle (5,-4);
\node at (4.5,-3.5) {4};
\draw[fill=7x4x3color1] (5,-3) rectangle (6,-4);
\node at (5.5,-3.5) {1};
\draw[fill=red] (6,-3) rectangle (7,-4);
\end{scope}

\begin{scope}[xshift=16cm,yshift=0cm]
\draw[very thick] (0,0) rectangle (7,-4);
\foreach \b in {0,...,3}
 \foreach \c in {0,...,6}
  \draw (\c,\b*-1) rectangle (\c+1,(\b*-1-1);

\draw[fill=7x4x3color18] (0,-0) rectangle (1,-1);
\node at (0.5,-0.5) {18};
\draw[fill=7x4x3color17] (1,-0) rectangle (2,-1);
\node at (1.5,-0.5) {17};
\draw[fill=red] (2,-0) rectangle (3,-1);
\draw[fill=7x4x3color11] (3,-0) rectangle (4,-1);
\node at (3.5,-0.5) {11};
\draw[fill=7x4x3color10] (4,-0) rectangle (5,-1);
\node at (4.5,-0.5) {10};
\draw[fill=7x4x3color9] (5,-0) rectangle (6,-1);
\node at (5.5,-0.5) {9};
\draw[fill=red] (6,-0) rectangle (7,-1);
\draw[fill=7x4x3color11] (0,-1) rectangle (1,-2);
\node at (0.5,-1.5) {11};
\draw[fill=7x4x3color10] (1,-1) rectangle (2,-2);
\node at (1.5,-1.5) {10};
\draw[fill=7x4x3color1] (2,-1) rectangle (3,-2);
\node at (2.5,-1.5) {1};
\draw[fill=7x4x3color6] (3,-1) rectangle (4,-2);
\node at (3.5,-1.5) {6};
\draw[fill=7x4x3color7] (4,-1) rectangle (5,-2);
\node at (4.5,-1.5) {7};
\draw[fill=7x4x3color8] (5,-1) rectangle (6,-2);
\node at (5.5,-1.5) {8};
\draw[fill=7x4x3color9] (6,-1) rectangle (7,-2);
\node at (6.5,-1.5) {9};
\draw[fill=red] (0,-2) rectangle (1,-3);
\draw[fill=7x4x3color9] (1,-2) rectangle (2,-3);
\node at (1.5,-2.5) {9};
\draw[fill=red] (2,-2) rectangle (3,-3);
\draw[fill=7x4x3color5] (3,-2) rectangle (4,-3);
\node at (3.5,-2.5) {5};
\draw[fill=red] (4,-2) rectangle (5,-3);
\draw[fill=7x4x3color3] (5,-2) rectangle (6,-3);
\node at (5.5,-2.5) {3};
\draw[fill=7x4x3color10] (6,-2) rectangle (7,-3);
\node at (6.5,-2.5) {10};
\draw[fill=7x4x3color11] (0,-3) rectangle (1,-4);
\node at (0.5,-3.5) {11};
\draw[fill=7x4x3color10] (1,-3) rectangle (2,-4);
\node at (1.5,-3.5) {10};
\draw[fill=7x4x3color7] (2,-3) rectangle (3,-4);
\node at (2.5,-3.5) {7};
\draw[fill=7x4x3color6] (3,-3) rectangle (4,-4);
\node at (3.5,-3.5) {6};
\draw[fill=7x4x3color5] (4,-3) rectangle (5,-4);
\node at (4.5,-3.5) {5};
\draw[fill=red] (5,-3) rectangle (6,-4);
\draw[fill=7x4x3color11] (6,-3) rectangle (7,-4);
\node at (6.5,-3.5) {11};
\end{scope}

\end{tikzpicture}
\end{center}

\begin{const}
\label{const:355}
$(3,5,5)$ is optimal.
\end{const}

\definecolor{5x5x3color1}{RGB}{255, 128, 128}
\definecolor{5x5x3color2}{RGB}{255, 135, 135}
\definecolor{5x5x3color3}{RGB}{255, 142, 142}
\definecolor{5x5x3color4}{RGB}{255, 150, 150}
\definecolor{5x5x3color5}{RGB}{255, 157, 157}
\definecolor{5x5x3color6}{RGB}{255, 165, 165}
\definecolor{5x5x3color7}{RGB}{255, 172, 172}
\definecolor{5x5x3color8}{RGB}{255, 180, 180}
\definecolor{5x5x3color9}{RGB}{255, 187, 187}
\definecolor{5x5x3color10}{RGB}{255, 195, 195}
\definecolor{5x5x3color11}{RGB}{255, 202, 202}
\definecolor{5x5x3color12}{RGB}{255, 210, 210}
\definecolor{5x5x3color13}{RGB}{255, 217, 217}
\definecolor{5x5x3color14}{RGB}{255, 225, 225}
\definecolor{5x5x3color15}{RGB}{255, 232, 232}
\definecolor{5x5x3color16}{RGB}{255, 240, 240}
\definecolor{5x5x3color17}{RGB}{255, 255, 255}

\begin{center}
\begin{tikzpicture}[scale=0.5]
\begin{scope}[xshift=0cm,yshift=0cm]
\draw[very thick] (0,0) rectangle (5,-5);
\foreach \b in {0,...,4}
 \foreach \c in {0,...,4}
  \draw (\c,\b*-1) rectangle (\c+1,(\b*-1-1);

\draw[fill=5x5x3color16] (0,-0) rectangle (1,-1);
\node at (0.5,-0.5) {16};
\draw[fill=5x5x3color15] (1,-0) rectangle (2,-1);
\node at (1.5,-0.5) {15};
\draw[fill=5x5x3color14] (2,-0) rectangle (3,-1);
\node at (2.5,-0.5) {14};
\draw[fill=5x5x3color13] (3,-0) rectangle (4,-1);
\node at (3.5,-0.5) {13};
\draw[fill=red] (4,-0) rectangle (5,-1);
\draw[fill=5x5x3color15] (0,-1) rectangle (1,-2);
\node at (0.5,-1.5) {15};
\draw[fill=5x5x3color14] (1,-1) rectangle (2,-2);
\node at (1.5,-1.5) {14};
\draw[fill=5x5x3color13] (2,-1) rectangle (3,-2);
\node at (2.5,-1.5) {13};
\draw[fill=5x5x3color12] (3,-1) rectangle (4,-2);
\node at (3.5,-1.5) {12};
\draw[fill=5x5x3color11] (4,-1) rectangle (5,-2);
\node at (4.5,-1.5) {11};
\draw[fill=red] (0,-2) rectangle (1,-3);
\draw[fill=5x5x3color7] (1,-2) rectangle (2,-3);
\node at (1.5,-2.5) {7};
\draw[fill=5x5x3color8] (2,-2) rectangle (3,-3);
\node at (2.5,-2.5) {8};
\draw[fill=5x5x3color9] (3,-2) rectangle (4,-3);
\node at (3.5,-2.5) {9};
\draw[fill=5x5x3color10] (4,-2) rectangle (5,-3);
\node at (4.5,-2.5) {10};
\draw[fill=5x5x3color1] (0,-3) rectangle (1,-4);
\node at (0.5,-3.5) {1};
\draw[fill=red] (1,-3) rectangle (2,-4);
\draw[fill=5x5x3color1] (2,-3) rectangle (3,-4);
\node at (2.5,-3.5) {1};
\draw[fill=5x5x3color2] (3,-3) rectangle (4,-4);
\node at (3.5,-3.5) {2};
\draw[fill=5x5x3color3] (4,-3) rectangle (5,-4);
\node at (4.5,-3.5) {3};
\draw[fill=red] (0,-4) rectangle (1,-5);
\draw[fill=5x5x3color1] (1,-4) rectangle (2,-5);
\node at (1.5,-4.5) {1};
\draw[fill=red] (2,-4) rectangle (3,-5);
\draw[fill=5x5x3color1] (3,-4) rectangle (4,-5);
\node at (3.5,-4.5) {1};
\draw[fill=red] (4,-4) rectangle (5,-5);
\end{scope}

\begin{scope}[xshift=6cm,yshift=0cm]
\draw[very thick] (0,0) rectangle (5,-5);
\foreach \b in {0,...,4}
 \foreach \c in {0,...,4}
  \draw (\c,\b*-1) rectangle (\c+1,(\b*-1-1);

\draw[fill=5x5x3color7] (0,-0) rectangle (1,-1);
\node at (0.5,-0.5) {7};
\draw[fill=5x5x3color6] (1,-0) rectangle (2,-1);
\node at (1.5,-0.5) {6};
\draw[fill=5x5x3color5] (2,-0) rectangle (3,-1);
\node at (2.5,-0.5) {5};
\draw[fill=5x5x3color4] (3,-0) rectangle (4,-1);
\node at (3.5,-0.5) {4};
\draw[fill=5x5x3color1] (4,-0) rectangle (5,-1);
\node at (4.5,-0.5) {1};
\draw[fill=red] (0,-1) rectangle (1,-2);
\draw[fill=5x5x3color5] (1,-1) rectangle (2,-2);
\node at (1.5,-1.5) {5};
\draw[fill=5x5x3color4] (2,-1) rectangle (3,-2);
\node at (2.5,-1.5) {4};
\draw[fill=5x5x3color3] (3,-1) rectangle (4,-2);
\node at (3.5,-1.5) {3};
\draw[fill=red] (4,-1) rectangle (5,-2);
\draw[fill=5x5x3color7] (0,-2) rectangle (1,-3);
\node at (0.5,-2.5) {7};
\draw[fill=5x5x3color6] (1,-2) rectangle (2,-3);
\node at (1.5,-2.5) {6};
\draw[fill=5x5x3color1] (2,-2) rectangle (3,-3);
\node at (2.5,-2.5) {1};
\draw[fill=red] (3,-2) rectangle (4,-3);
\draw[fill=5x5x3color1] (4,-2) rectangle (5,-3);
\node at (4.5,-2.5) {1};
\draw[fill=5x5x3color8] (0,-3) rectangle (1,-4);
\node at (0.5,-3.5) {8};
\draw[fill=5x5x3color1] (1,-3) rectangle (2,-4);
\node at (1.5,-3.5) {1};
\draw[fill=red] (2,-3) rectangle (3,-4);
\draw[fill=5x5x3color1] (3,-3) rectangle (4,-4);
\node at (3.5,-3.5) {1};
\draw[fill=5x5x3color2] (4,-3) rectangle (5,-4);
\node at (4.5,-3.5) {2};
\draw[fill=5x5x3color9] (0,-4) rectangle (1,-5);
\node at (0.5,-4.5) {9};
\draw[fill=red] (1,-4) rectangle (2,-5);
\draw[fill=5x5x3color1] (2,-4) rectangle (3,-5);
\node at (2.5,-4.5) {1};
\draw[fill=red] (3,-4) rectangle (4,-5);
\draw[fill=5x5x3color1] (4,-4) rectangle (5,-5);
\node at (4.5,-4.5) {1};
\end{scope}

\begin{scope}[xshift=12cm,yshift=0cm]
\draw[very thick] (0,0) rectangle (5,-5);
\foreach \b in {0,...,4}
 \foreach \c in {0,...,4}
  \draw (\c,\b*-1) rectangle (\c+1,(\b*-1-1);

\draw[fill=red] (0,-0) rectangle (1,-1);
\draw[fill=5x5x3color1] (1,-0) rectangle (2,-1);
\node at (1.5,-0.5) {1};
\draw[fill=red] (2,-0) rectangle (3,-1);
\draw[fill=5x5x3color3] (3,-0) rectangle (4,-1);
\node at (3.5,-0.5) {3};
\draw[fill=red] (4,-0) rectangle (5,-1);
\draw[fill=5x5x3color1] (0,-1) rectangle (1,-2);
\node at (0.5,-1.5) {1};
\draw[fill=red] (1,-1) rectangle (2,-2);
\draw[fill=5x5x3color1] (2,-1) rectangle (3,-2);
\node at (2.5,-1.5) {1};
\draw[fill=5x5x3color2] (3,-1) rectangle (4,-2);
\node at (3.5,-1.5) {2};
\draw[fill=5x5x3color1] (4,-1) rectangle (5,-2);
\node at (4.5,-1.5) {1};
\draw[fill=5x5x3color8] (0,-2) rectangle (1,-3);
\node at (0.5,-2.5) {8};
\draw[fill=5x5x3color7] (1,-2) rectangle (2,-3);
\node at (1.5,-2.5) {7};
\draw[fill=red] (2,-2) rectangle (3,-3);
\draw[fill=5x5x3color1] (3,-2) rectangle (4,-3);
\node at (3.5,-2.5) {1};
\draw[fill=red] (4,-2) rectangle (5,-3);
\draw[fill=5x5x3color9] (0,-3) rectangle (1,-4);
\node at (0.5,-3.5) {9};
\draw[fill=5x5x3color8] (1,-3) rectangle (2,-4);
\node at (1.5,-3.5) {8};
\draw[fill=5x5x3color5] (2,-3) rectangle (3,-4);
\node at (2.5,-3.5) {5};
\draw[fill=5x5x3color4] (3,-3) rectangle (4,-4);
\node at (3.5,-3.5) {4};
\draw[fill=5x5x3color3] (4,-3) rectangle (5,-4);
\node at (4.5,-3.5) {3};
\draw[fill=5x5x3color10] (0,-4) rectangle (1,-5);
\node at (0.5,-4.5) {10};
\draw[fill=5x5x3color9] (1,-4) rectangle (2,-5);
\node at (1.5,-4.5) {9};
\draw[fill=5x5x3color6] (2,-4) rectangle (3,-5);
\node at (2.5,-4.5) {6};
\draw[fill=5x5x3color5] (3,-4) rectangle (4,-5);
\node at (3.5,-4.5) {5};
\draw[fill=red] (4,-4) rectangle (5,-5);
\end{scope}

\end{tikzpicture}
\end{center}

\begin{const}
\label{const:357}
$(3,5,7)$ is optimal.
\end{const}

\definecolor{7x5x3color1}{RGB}{255, 128, 128}
\definecolor{7x5x3color2}{RGB}{255, 133, 133}
\definecolor{7x5x3color3}{RGB}{255, 139, 139}
\definecolor{7x5x3color4}{RGB}{255, 144, 144}
\definecolor{7x5x3color5}{RGB}{255, 150, 150}
\definecolor{7x5x3color6}{RGB}{255, 155, 155}
\definecolor{7x5x3color7}{RGB}{255, 161, 161}
\definecolor{7x5x3color8}{RGB}{255, 166, 166}
\definecolor{7x5x3color9}{RGB}{255, 172, 172}
\definecolor{7x5x3color10}{RGB}{255, 177, 177}
\definecolor{7x5x3color11}{RGB}{255, 183, 183}
\definecolor{7x5x3color12}{RGB}{255, 188, 188}
\definecolor{7x5x3color13}{RGB}{255, 194, 194}
\definecolor{7x5x3color14}{RGB}{255, 199, 199}
\definecolor{7x5x3color15}{RGB}{255, 205, 205}
\definecolor{7x5x3color16}{RGB}{255, 210, 210}
\definecolor{7x5x3color17}{RGB}{255, 216, 216}
\definecolor{7x5x3color18}{RGB}{255, 221, 221}
\definecolor{7x5x3color19}{RGB}{255, 227, 227}
\definecolor{7x5x3color20}{RGB}{255, 232, 232}
\definecolor{7x5x3color21}{RGB}{255, 238, 238}
\definecolor{7x5x3color22}{RGB}{255, 243, 243}
\definecolor{7x5x3color23}{RGB}{255, 255, 255}

\begin{center}
\begin{tikzpicture}[scale=0.5]
\begin{scope}[xshift=0cm,yshift=0cm]
\draw[very thick] (0,0) rectangle (7,-5);
\foreach \b in {0,...,4}
 \foreach \c in {0,...,6}
  \draw (\c,\b*-1) rectangle (\c+1,(\b*-1-1);

\draw[fill=red] (0,-0) rectangle (1,-1);
\draw[fill=7x5x3color11] (1,-0) rectangle (2,-1);
\node at (1.5,-0.5) {11};
\draw[fill=7x5x3color12] (2,-0) rectangle (3,-1);
\node at (2.5,-0.5) {12};
\draw[fill=7x5x3color13] (3,-0) rectangle (4,-1);
\node at (3.5,-0.5) {13};
\draw[fill=red] (4,-0) rectangle (5,-1);
\draw[fill=7x5x3color19] (5,-0) rectangle (6,-1);
\node at (5.5,-0.5) {19};
\draw[fill=red] (6,-0) rectangle (7,-1);
\draw[fill=7x5x3color5] (0,-1) rectangle (1,-2);
\node at (0.5,-1.5) {5};
\draw[fill=7x5x3color10] (1,-1) rectangle (2,-2);
\node at (1.5,-1.5) {10};
\draw[fill=7x5x3color9] (2,-1) rectangle (3,-2);
\node at (2.5,-1.5) {9};
\draw[fill=red] (3,-1) rectangle (4,-2);
\draw[fill=7x5x3color11] (4,-1) rectangle (5,-2);
\node at (4.5,-1.5) {11};
\draw[fill=7x5x3color18] (5,-1) rectangle (6,-2);
\node at (5.5,-1.5) {18};
\draw[fill=7x5x3color19] (6,-1) rectangle (7,-2);
\node at (6.5,-1.5) {19};
\draw[fill=red] (0,-2) rectangle (1,-3);
\draw[fill=7x5x3color9] (1,-2) rectangle (2,-3);
\node at (1.5,-2.5) {9};
\draw[fill=7x5x3color8] (2,-2) rectangle (3,-3);
\node at (2.5,-2.5) {8};
\draw[fill=7x5x3color7] (3,-2) rectangle (4,-3);
\node at (3.5,-2.5) {7};
\draw[fill=7x5x3color12] (4,-2) rectangle (5,-3);
\node at (4.5,-2.5) {12};
\draw[fill=7x5x3color17] (5,-2) rectangle (6,-3);
\node at (5.5,-2.5) {17};
\draw[fill=7x5x3color18] (6,-2) rectangle (7,-3);
\node at (6.5,-2.5) {18};
\draw[fill=7x5x3color11] (0,-3) rectangle (1,-4);
\node at (0.5,-3.5) {11};
\draw[fill=7x5x3color10] (1,-3) rectangle (2,-4);
\node at (1.5,-3.5) {10};
\draw[fill=7x5x3color5] (2,-3) rectangle (3,-4);
\node at (2.5,-3.5) {5};
\draw[fill=red] (3,-3) rectangle (4,-4);
\draw[fill=7x5x3color13] (4,-3) rectangle (5,-4);
\node at (4.5,-3.5) {13};
\draw[fill=7x5x3color16] (5,-3) rectangle (6,-4);
\node at (5.5,-3.5) {16};
\draw[fill=7x5x3color17] (6,-3) rectangle (7,-4);
\node at (6.5,-3.5) {17};
\draw[fill=7x5x3color12] (0,-4) rectangle (1,-5);
\node at (0.5,-4.5) {12};
\draw[fill=7x5x3color11] (1,-4) rectangle (2,-5);
\node at (1.5,-4.5) {11};
\draw[fill=red] (2,-4) rectangle (3,-5);
\draw[fill=7x5x3color7] (3,-4) rectangle (4,-5);
\node at (3.5,-4.5) {7};
\draw[fill=7x5x3color14] (4,-4) rectangle (5,-5);
\node at (4.5,-4.5) {14};
\draw[fill=7x5x3color15] (5,-4) rectangle (6,-5);
\node at (5.5,-4.5) {15};
\draw[fill=red] (6,-4) rectangle (7,-5);
\end{scope}

\begin{scope}[xshift=8cm,yshift=0cm]
\draw[very thick] (0,0) rectangle (7,-5);
\foreach \b in {0,...,4}
 \foreach \c in {0,...,6}
  \draw (\c,\b*-1) rectangle (\c+1,(\b*-1-1);

\draw[fill=7x5x3color5] (0,-0) rectangle (1,-1);
\node at (0.5,-0.5) {5};
\draw[fill=7x5x3color6] (1,-0) rectangle (2,-1);
\node at (1.5,-0.5) {6};
\draw[fill=7x5x3color7] (2,-0) rectangle (3,-1);
\node at (2.5,-0.5) {7};
\draw[fill=7x5x3color14] (3,-0) rectangle (4,-1);
\node at (3.5,-0.5) {14};
\draw[fill=7x5x3color15] (4,-0) rectangle (5,-1);
\node at (4.5,-0.5) {15};
\draw[fill=7x5x3color20] (5,-0) rectangle (6,-1);
\node at (5.5,-0.5) {20};
\draw[fill=7x5x3color21] (6,-0) rectangle (7,-1);
\node at (6.5,-0.5) {21};
\draw[fill=7x5x3color4] (0,-1) rectangle (1,-2);
\node at (0.5,-1.5) {4};
\draw[fill=7x5x3color1] (1,-1) rectangle (2,-2);
\node at (1.5,-1.5) {1};
\draw[fill=red] (2,-1) rectangle (3,-2);
\draw[fill=7x5x3color7] (3,-1) rectangle (4,-2);
\node at (3.5,-1.5) {7};
\draw[fill=7x5x3color10] (4,-1) rectangle (5,-2);
\node at (4.5,-1.5) {10};
\draw[fill=7x5x3color11] (5,-1) rectangle (6,-2);
\node at (5.5,-1.5) {11};
\draw[fill=7x5x3color20] (6,-1) rectangle (7,-2);
\node at (6.5,-1.5) {20};
\draw[fill=7x5x3color1] (0,-2) rectangle (1,-3);
\node at (0.5,-2.5) {1};
\draw[fill=red] (1,-2) rectangle (2,-3);
\draw[fill=7x5x3color3] (2,-2) rectangle (3,-3);
\node at (2.5,-2.5) {3};
\draw[fill=7x5x3color6] (3,-2) rectangle (4,-3);
\node at (3.5,-2.5) {6};
\draw[fill=7x5x3color9] (4,-2) rectangle (5,-3);
\node at (4.5,-2.5) {9};
\draw[fill=7x5x3color10] (5,-2) rectangle (6,-3);
\node at (5.5,-2.5) {10};
\draw[fill=7x5x3color11] (6,-2) rectangle (7,-3);
\node at (6.5,-2.5) {11};
\draw[fill=red] (0,-3) rectangle (1,-4);
\draw[fill=7x5x3color1] (1,-3) rectangle (2,-4);
\node at (1.5,-3.5) {1};
\draw[fill=7x5x3color4] (2,-3) rectangle (3,-4);
\node at (2.5,-3.5) {4};
\draw[fill=7x5x3color5] (3,-3) rectangle (4,-4);
\node at (3.5,-3.5) {5};
\draw[fill=7x5x3color8] (4,-3) rectangle (5,-4);
\node at (4.5,-3.5) {8};
\draw[fill=7x5x3color9] (5,-3) rectangle (6,-4);
\node at (5.5,-3.5) {9};
\draw[fill=7x5x3color10] (6,-3) rectangle (7,-4);
\node at (6.5,-3.5) {10};
\draw[fill=7x5x3color7] (0,-4) rectangle (1,-5);
\node at (0.5,-4.5) {7};
\draw[fill=7x5x3color6] (1,-4) rectangle (2,-5);
\node at (1.5,-4.5) {6};
\draw[fill=7x5x3color5] (2,-4) rectangle (3,-5);
\node at (2.5,-4.5) {5};
\draw[fill=7x5x3color6] (3,-4) rectangle (4,-5);
\node at (3.5,-4.5) {6};
\draw[fill=7x5x3color7] (4,-4) rectangle (5,-5);
\node at (4.5,-4.5) {7};
\draw[fill=red] (5,-4) rectangle (6,-5);
\draw[fill=7x5x3color1] (6,-4) rectangle (7,-5);
\node at (6.5,-4.5) {1};
\end{scope}

\begin{scope}[xshift=16cm,yshift=0cm]
\draw[very thick] (0,0) rectangle (7,-5);
\foreach \b in {0,...,4}
 \foreach \c in {0,...,6}
  \draw (\c,\b*-1) rectangle (\c+1,(\b*-1-1);

\draw[fill=red] (0,-0) rectangle (1,-1);
\draw[fill=7x5x3color1] (1,-0) rectangle (2,-1);
\node at (1.5,-0.5) {1};
\draw[fill=red] (2,-0) rectangle (3,-1);
\draw[fill=7x5x3color15] (3,-0) rectangle (4,-1);
\node at (3.5,-0.5) {15};
\draw[fill=7x5x3color16] (4,-0) rectangle (5,-1);
\node at (4.5,-0.5) {16};
\draw[fill=7x5x3color21] (5,-0) rectangle (6,-1);
\node at (5.5,-0.5) {21};
\draw[fill=7x5x3color22] (6,-0) rectangle (7,-1);
\node at (6.5,-0.5) {22};
\draw[fill=7x5x3color3] (0,-1) rectangle (1,-2);
\node at (0.5,-1.5) {3};
\draw[fill=red] (1,-1) rectangle (2,-2);
\draw[fill=7x5x3color1] (2,-1) rectangle (3,-2);
\node at (2.5,-1.5) {1};
\draw[fill=7x5x3color8] (3,-1) rectangle (4,-2);
\node at (3.5,-1.5) {8};
\draw[fill=7x5x3color9] (4,-1) rectangle (5,-2);
\node at (4.5,-1.5) {9};
\draw[fill=red] (5,-1) rectangle (6,-2);
\draw[fill=7x5x3color21] (6,-1) rectangle (7,-2);
\node at (6.5,-1.5) {21};
\draw[fill=7x5x3color2] (0,-2) rectangle (1,-3);
\node at (0.5,-2.5) {2};
\draw[fill=7x5x3color1] (1,-2) rectangle (2,-3);
\node at (1.5,-2.5) {1};
\draw[fill=7x5x3color2] (2,-2) rectangle (3,-3);
\node at (2.5,-2.5) {2};
\draw[fill=7x5x3color3] (3,-2) rectangle (4,-3);
\node at (3.5,-2.5) {3};
\draw[fill=red] (4,-2) rectangle (5,-3);
\draw[fill=7x5x3color1] (5,-2) rectangle (6,-3);
\node at (5.5,-2.5) {1};
\draw[fill=red] (6,-2) rectangle (7,-3);
\draw[fill=7x5x3color1] (0,-3) rectangle (1,-4);
\node at (0.5,-3.5) {1};
\draw[fill=red] (1,-3) rectangle (2,-4);
\draw[fill=7x5x3color1] (2,-3) rectangle (3,-4);
\node at (2.5,-3.5) {1};
\draw[fill=red] (3,-3) rectangle (4,-4);
\draw[fill=7x5x3color1] (4,-3) rectangle (5,-4);
\node at (4.5,-3.5) {1};
\draw[fill=7x5x3color2] (5,-3) rectangle (6,-4);
\node at (5.5,-3.5) {2};
\draw[fill=7x5x3color3] (6,-3) rectangle (7,-4);
\node at (6.5,-3.5) {3};
\draw[fill=red] (0,-4) rectangle (1,-5);
\draw[fill=7x5x3color1] (1,-4) rectangle (2,-5);
\node at (1.5,-4.5) {1};
\draw[fill=red] (2,-4) rectangle (3,-5);
\draw[fill=7x5x3color1] (3,-4) rectangle (4,-5);
\node at (3.5,-4.5) {1};
\draw[fill=red] (4,-4) rectangle (5,-5);
\draw[fill=7x5x3color1] (5,-4) rectangle (6,-5);
\node at (5.5,-4.5) {1};
\draw[fill=red] (6,-4) rectangle (7,-5);
\end{scope}

\end{tikzpicture}
\end{center}

\begin{const}
\label{const:445}
$(4,4,5)$ is optimal.
\end{const}

\definecolor{5x4x4color1}{RGB}{255, 128, 128}
\definecolor{5x4x4color2}{RGB}{255, 137, 137}
\definecolor{5x4x4color3}{RGB}{255, 146, 146}
\definecolor{5x4x4color4}{RGB}{255, 155, 155}
\definecolor{5x4x4color5}{RGB}{255, 164, 164}
\definecolor{5x4x4color6}{RGB}{255, 173, 173}
\definecolor{5x4x4color7}{RGB}{255, 182, 182}
\definecolor{5x4x4color8}{RGB}{255, 191, 191}
\definecolor{5x4x4color9}{RGB}{255, 200, 200}
\definecolor{5x4x4color10}{RGB}{255, 209, 209}
\definecolor{5x4x4color11}{RGB}{255, 218, 218}
\definecolor{5x4x4color12}{RGB}{255, 227, 227}
\definecolor{5x4x4color13}{RGB}{255, 236, 236}
\definecolor{5x4x4color14}{RGB}{255, 255, 255}

\begin{center}
\begin{tikzpicture}[scale=0.5]
\begin{scope}[xshift=0cm,yshift=0cm]
\draw[very thick] (0,0) rectangle (5,-4);
\foreach \b in {0,...,3}
 \foreach \c in {0,...,4}
  \draw (\c,\b*-1) rectangle (\c+1,(\b*-1-1);

\draw[fill=5x4x4color6] (0,-0) rectangle (1,-1);
\node at (0.5,-0.5) {6};
\draw[fill=5x4x4color5] (1,-0) rectangle (2,-1);
\node at (1.5,-0.5) {5};
\draw[fill=red] (2,-0) rectangle (3,-1);
\draw[fill=5x4x4color11] (3,-0) rectangle (4,-1);
\node at (3.5,-0.5) {11};
\draw[fill=5x4x4color12] (4,-0) rectangle (5,-1);
\node at (4.5,-0.5) {12};
\draw[fill=5x4x4color1] (0,-1) rectangle (1,-2);
\node at (0.5,-1.5) {1};
\draw[fill=red] (1,-1) rectangle (2,-2);
\draw[fill=5x4x4color1] (2,-1) rectangle (3,-2);
\node at (2.5,-1.5) {1};
\draw[fill=5x4x4color10] (3,-1) rectangle (4,-2);
\node at (3.5,-1.5) {10};
\draw[fill=5x4x4color11] (4,-1) rectangle (5,-2);
\node at (4.5,-1.5) {11};
\draw[fill=red] (0,-2) rectangle (1,-3);
\draw[fill=5x4x4color1] (1,-2) rectangle (2,-3);
\node at (1.5,-2.5) {1};
\draw[fill=5x4x4color4] (2,-2) rectangle (3,-3);
\node at (2.5,-2.5) {4};
\draw[fill=5x4x4color9] (3,-2) rectangle (4,-3);
\node at (3.5,-2.5) {9};
\draw[fill=red] (4,-2) rectangle (5,-3);
\draw[fill=5x4x4color7] (0,-3) rectangle (1,-4);
\node at (0.5,-3.5) {7};
\draw[fill=red] (1,-3) rectangle (2,-4);
\draw[fill=5x4x4color5] (2,-3) rectangle (3,-4);
\node at (2.5,-3.5) {5};
\draw[fill=5x4x4color12] (3,-3) rectangle (4,-4);
\node at (3.5,-3.5) {12};
\draw[fill=5x4x4color13] (4,-3) rectangle (5,-4);
\node at (4.5,-3.5) {13};
\end{scope}

\begin{scope}[xshift=6cm,yshift=0cm]
\draw[very thick] (0,0) rectangle (5,-4);
\foreach \b in {0,...,3}
 \foreach \c in {0,...,4}
  \draw (\c,\b*-1) rectangle (\c+1,(\b*-1-1);

\draw[fill=5x4x4color5] (0,-0) rectangle (1,-1);
\node at (0.5,-0.5) {5};
\draw[fill=5x4x4color4] (1,-0) rectangle (2,-1);
\node at (1.5,-0.5) {4};
\draw[fill=5x4x4color1] (2,-0) rectangle (3,-1);
\node at (2.5,-0.5) {1};
\draw[fill=red] (3,-0) rectangle (4,-1);
\draw[fill=5x4x4color11] (4,-0) rectangle (5,-1);
\node at (4.5,-0.5) {11};
\draw[fill=red] (0,-1) rectangle (1,-2);
\draw[fill=5x4x4color1] (1,-1) rectangle (2,-2);
\node at (1.5,-1.5) {1};
\draw[fill=red] (2,-1) rectangle (3,-2);
\draw[fill=5x4x4color7] (3,-1) rectangle (4,-2);
\node at (3.5,-1.5) {7};
\draw[fill=5x4x4color10] (4,-1) rectangle (5,-2);
\node at (4.5,-1.5) {10};
\draw[fill=5x4x4color3] (0,-2) rectangle (1,-3);
\node at (0.5,-2.5) {3};
\draw[fill=5x4x4color2] (1,-2) rectangle (2,-3);
\node at (1.5,-2.5) {2};
\draw[fill=5x4x4color3] (2,-2) rectangle (3,-3);
\node at (2.5,-2.5) {3};
\draw[fill=5x4x4color8] (3,-2) rectangle (4,-3);
\node at (3.5,-2.5) {8};
\draw[fill=5x4x4color9] (4,-2) rectangle (5,-3);
\node at (4.5,-2.5) {9};
\draw[fill=5x4x4color6] (0,-3) rectangle (1,-4);
\node at (0.5,-3.5) {6};
\draw[fill=5x4x4color1] (1,-3) rectangle (2,-4);
\node at (1.5,-3.5) {1};
\draw[fill=red] (2,-3) rectangle (3,-4);
\draw[fill=5x4x4color11] (3,-3) rectangle (4,-4);
\node at (3.5,-3.5) {11};
\draw[fill=5x4x4color12] (4,-3) rectangle (5,-4);
\node at (4.5,-3.5) {12};
\end{scope}

\begin{scope}[xshift=12cm,yshift=0cm]
\draw[very thick] (0,0) rectangle (5,-4);
\foreach \b in {0,...,3}
 \foreach \c in {0,...,4}
  \draw (\c,\b*-1) rectangle (\c+1,(\b*-1-1);

\draw[fill=red] (0,-0) rectangle (1,-1);
\draw[fill=5x4x4color3] (1,-0) rectangle (2,-1);
\node at (1.5,-0.5) {3};
\draw[fill=5x4x4color4] (2,-0) rectangle (3,-1);
\node at (2.5,-0.5) {4};
\draw[fill=5x4x4color5] (3,-0) rectangle (4,-1);
\node at (3.5,-0.5) {5};
\draw[fill=red] (4,-0) rectangle (5,-1);
\draw[fill=5x4x4color1] (0,-1) rectangle (1,-2);
\node at (0.5,-1.5) {1};
\draw[fill=5x4x4color2] (1,-1) rectangle (2,-2);
\node at (1.5,-1.5) {2};
\draw[fill=5x4x4color3] (2,-1) rectangle (3,-2);
\node at (2.5,-1.5) {3};
\draw[fill=5x4x4color6] (3,-1) rectangle (4,-2);
\node at (3.5,-1.5) {6};
\draw[fill=5x4x4color1] (4,-1) rectangle (5,-2);
\node at (4.5,-1.5) {1};
\draw[fill=5x4x4color4] (0,-2) rectangle (1,-3);
\node at (0.5,-2.5) {4};
\draw[fill=5x4x4color3] (1,-2) rectangle (2,-3);
\node at (1.5,-2.5) {3};
\draw[fill=5x4x4color4] (2,-2) rectangle (3,-3);
\node at (2.5,-2.5) {4};
\draw[fill=5x4x4color7] (3,-2) rectangle (4,-3);
\node at (3.5,-2.5) {7};
\draw[fill=red] (4,-2) rectangle (5,-3);
\draw[fill=5x4x4color5] (0,-3) rectangle (1,-4);
\node at (0.5,-3.5) {5};
\draw[fill=red] (1,-3) rectangle (2,-4);
\draw[fill=5x4x4color5] (2,-3) rectangle (3,-4);
\node at (2.5,-3.5) {5};
\draw[fill=5x4x4color10] (3,-3) rectangle (4,-4);
\node at (3.5,-3.5) {10};
\draw[fill=5x4x4color11] (4,-3) rectangle (5,-4);
\node at (4.5,-3.5) {11};
\end{scope}

\begin{scope}[xshift=18cm,yshift=0cm]
\draw[very thick] (0,0) rectangle (5,-4);
\foreach \b in {0,...,3}
 \foreach \c in {0,...,4}
  \draw (\c,\b*-1) rectangle (\c+1,(\b*-1-1);

\draw[fill=5x4x4color1] (0,-0) rectangle (1,-1);
\node at (0.5,-0.5) {1};
\draw[fill=red] (1,-0) rectangle (2,-1);
\draw[fill=5x4x4color5] (2,-0) rectangle (3,-1);
\node at (2.5,-0.5) {5};
\draw[fill=5x4x4color8] (3,-0) rectangle (4,-1);
\node at (3.5,-0.5) {8};
\draw[fill=5x4x4color9] (4,-0) rectangle (5,-1);
\node at (4.5,-0.5) {9};
\draw[fill=red] (0,-1) rectangle (1,-2);
\draw[fill=5x4x4color1] (1,-1) rectangle (2,-2);
\node at (1.5,-1.5) {1};
\draw[fill=red] (2,-1) rectangle (3,-2);
\draw[fill=5x4x4color7] (3,-1) rectangle (4,-2);
\node at (3.5,-1.5) {7};
\draw[fill=red] (4,-1) rectangle (5,-2);
\draw[fill=5x4x4color5] (0,-2) rectangle (1,-3);
\node at (0.5,-2.5) {5};
\draw[fill=5x4x4color6] (1,-2) rectangle (2,-3);
\node at (1.5,-2.5) {6};
\draw[fill=5x4x4color7] (2,-2) rectangle (3,-3);
\node at (2.5,-2.5) {7};
\draw[fill=5x4x4color8] (3,-2) rectangle (4,-3);
\node at (3.5,-2.5) {8};
\draw[fill=5x4x4color1] (4,-2) rectangle (5,-3);
\node at (4.5,-2.5) {1};
\draw[fill=red] (0,-3) rectangle (1,-4);
\draw[fill=5x4x4color7] (1,-3) rectangle (2,-4);
\node at (1.5,-3.5) {7};
\draw[fill=5x4x4color8] (2,-3) rectangle (3,-4);
\node at (2.5,-3.5) {8};
\draw[fill=5x4x4color9] (3,-3) rectangle (4,-4);
\node at (3.5,-3.5) {9};
\draw[fill=red] (4,-3) rectangle (5,-4);
\end{scope}

\end{tikzpicture}
\end{center}

\begin{const}
\label{const:446}
$(4,4,6)$ is optimal.
\end{const}

\definecolor{6x4x4color1}{RGB}{255, 128, 128}
\definecolor{6x4x4color2}{RGB}{255, 133, 133}
\definecolor{6x4x4color3}{RGB}{255, 139, 139}
\definecolor{6x4x4color4}{RGB}{255, 145, 145}
\definecolor{6x4x4color5}{RGB}{255, 151, 151}
\definecolor{6x4x4color6}{RGB}{255, 156, 156}
\definecolor{6x4x4color7}{RGB}{255, 162, 162}
\definecolor{6x4x4color8}{RGB}{255, 168, 168}
\definecolor{6x4x4color9}{RGB}{255, 174, 174}
\definecolor{6x4x4color10}{RGB}{255, 179, 179}
\definecolor{6x4x4color11}{RGB}{255, 185, 185}
\definecolor{6x4x4color12}{RGB}{255, 191, 191}
\definecolor{6x4x4color13}{RGB}{255, 197, 197}
\definecolor{6x4x4color14}{RGB}{255, 203, 203}
\definecolor{6x4x4color15}{RGB}{255, 208, 208}
\definecolor{6x4x4color16}{RGB}{255, 214, 214}
\definecolor{6x4x4color17}{RGB}{255, 220, 220}
\definecolor{6x4x4color18}{RGB}{255, 226, 226}
\definecolor{6x4x4color19}{RGB}{255, 231, 231}
\definecolor{6x4x4color20}{RGB}{255, 237, 237}
\definecolor{6x4x4color21}{RGB}{255, 243, 243}
\definecolor{6x4x4color22}{RGB}{255, 255, 255}

\begin{center}
\begin{tikzpicture}[scale=0.5]
\begin{scope}[xshift=0cm,yshift=0cm]
\draw[very thick] (0,0) rectangle (6,-4);
\foreach \b in {0,...,3}
 \foreach \c in {0,...,5}
  \draw (\c,\b*-1) rectangle (\c+1,(\b*-1-1);

\draw[fill=red] (0,-0) rectangle (1,-1);
\draw[fill=6x4x4color1] (1,-0) rectangle (2,-1);
\node at (1.5,-0.5) {1};
\draw[fill=red] (2,-0) rectangle (3,-1);
\draw[fill=6x4x4color3] (3,-0) rectangle (4,-1);
\node at (3.5,-0.5) {3};
\draw[fill=red] (4,-0) rectangle (5,-1);
\draw[fill=6x4x4color1] (5,-0) rectangle (6,-1);
\node at (5.5,-0.5) {1};
\draw[fill=6x4x4color7] (0,-1) rectangle (1,-2);
\node at (0.5,-1.5) {7};
\draw[fill=6x4x4color6] (1,-1) rectangle (2,-2);
\node at (1.5,-1.5) {6};
\draw[fill=6x4x4color5] (2,-1) rectangle (3,-2);
\node at (2.5,-1.5) {5};
\draw[fill=6x4x4color4] (3,-1) rectangle (4,-2);
\node at (3.5,-1.5) {4};
\draw[fill=6x4x4color1] (4,-1) rectangle (5,-2);
\node at (4.5,-1.5) {1};
\draw[fill=red] (5,-1) rectangle (6,-2);
\draw[fill=6x4x4color8] (0,-2) rectangle (1,-3);
\node at (0.5,-2.5) {8};
\draw[fill=6x4x4color7] (1,-2) rectangle (2,-3);
\node at (1.5,-2.5) {7};
\draw[fill=6x4x4color8] (2,-2) rectangle (3,-3);
\node at (2.5,-2.5) {8};
\draw[fill=6x4x4color5] (3,-2) rectangle (4,-3);
\node at (3.5,-2.5) {5};
\draw[fill=red] (4,-2) rectangle (5,-3);
\draw[fill=6x4x4color13] (5,-2) rectangle (6,-3);
\node at (5.5,-2.5) {13};
\draw[fill=6x4x4color9] (0,-3) rectangle (1,-4);
\node at (0.5,-3.5) {9};
\draw[fill=red] (1,-3) rectangle (2,-4);
\draw[fill=6x4x4color11] (2,-3) rectangle (3,-4);
\node at (2.5,-3.5) {11};
\draw[fill=6x4x4color14] (3,-3) rectangle (4,-4);
\node at (3.5,-3.5) {14};
\draw[fill=6x4x4color15] (4,-3) rectangle (5,-4);
\node at (4.5,-3.5) {15};
\draw[fill=6x4x4color16] (5,-3) rectangle (6,-4);
\node at (5.5,-3.5) {16};
\end{scope}

\begin{scope}[xshift=7cm,yshift=0cm]
\draw[very thick] (0,0) rectangle (6,-4);
\foreach \b in {0,...,3}
 \foreach \c in {0,...,5}
  \draw (\c,\b*-1) rectangle (\c+1,(\b*-1-1);

\draw[fill=6x4x4color1] (0,-0) rectangle (1,-1);
\node at (0.5,-0.5) {1};
\draw[fill=red] (1,-0) rectangle (2,-1);
\draw[fill=6x4x4color1] (2,-0) rectangle (3,-1);
\node at (2.5,-0.5) {1};
\draw[fill=6x4x4color2] (3,-0) rectangle (4,-1);
\node at (3.5,-0.5) {2};
\draw[fill=6x4x4color1] (4,-0) rectangle (5,-1);
\node at (4.5,-0.5) {1};
\draw[fill=red] (5,-0) rectangle (6,-1);
\draw[fill=6x4x4color6] (0,-1) rectangle (1,-2);
\node at (0.5,-1.5) {6};
\draw[fill=6x4x4color5] (1,-1) rectangle (2,-2);
\node at (1.5,-1.5) {5};
\draw[fill=6x4x4color4] (2,-1) rectangle (3,-2);
\node at (2.5,-1.5) {4};
\draw[fill=6x4x4color3] (3,-1) rectangle (4,-2);
\node at (3.5,-1.5) {3};
\draw[fill=6x4x4color4] (4,-1) rectangle (5,-2);
\node at (4.5,-1.5) {4};
\draw[fill=6x4x4color5] (5,-1) rectangle (6,-2);
\node at (5.5,-1.5) {5};
\draw[fill=6x4x4color7] (0,-2) rectangle (1,-3);
\node at (0.5,-2.5) {7};
\draw[fill=6x4x4color6] (1,-2) rectangle (2,-3);
\node at (1.5,-2.5) {6};
\draw[fill=6x4x4color7] (2,-2) rectangle (3,-3);
\node at (2.5,-2.5) {7};
\draw[fill=red] (3,-2) rectangle (4,-3);
\draw[fill=6x4x4color5] (4,-2) rectangle (5,-3);
\node at (4.5,-2.5) {5};
\draw[fill=6x4x4color12] (5,-2) rectangle (6,-3);
\node at (5.5,-2.5) {12};
\draw[fill=red] (0,-3) rectangle (1,-4);
\draw[fill=6x4x4color1] (1,-3) rectangle (2,-4);
\node at (1.5,-3.5) {1};
\draw[fill=6x4x4color10] (2,-3) rectangle (3,-4);
\node at (2.5,-3.5) {10};
\draw[fill=6x4x4color13] (3,-3) rectangle (4,-4);
\node at (3.5,-3.5) {13};
\draw[fill=6x4x4color14] (4,-3) rectangle (5,-4);
\node at (4.5,-3.5) {14};
\draw[fill=6x4x4color15] (5,-3) rectangle (6,-4);
\node at (5.5,-3.5) {15};
\end{scope}

\begin{scope}[xshift=14cm,yshift=0cm]
\draw[very thick] (0,0) rectangle (6,-4);
\foreach \b in {0,...,3}
 \foreach \c in {0,...,5}
  \draw (\c,\b*-1) rectangle (\c+1,(\b*-1-1);

\draw[fill=red] (0,-0) rectangle (1,-1);
\draw[fill=6x4x4color1] (1,-0) rectangle (2,-1);
\node at (1.5,-0.5) {1};
\draw[fill=red] (2,-0) rectangle (3,-1);
\draw[fill=6x4x4color1] (3,-0) rectangle (4,-1);
\node at (3.5,-0.5) {1};
\draw[fill=red] (4,-0) rectangle (5,-1);
\draw[fill=6x4x4color1] (5,-0) rectangle (6,-1);
\node at (5.5,-0.5) {1};
\draw[fill=6x4x4color1] (0,-1) rectangle (1,-2);
\node at (0.5,-1.5) {1};
\draw[fill=6x4x4color2] (1,-1) rectangle (2,-2);
\node at (1.5,-1.5) {2};
\draw[fill=6x4x4color3] (2,-1) rectangle (3,-2);
\node at (2.5,-1.5) {3};
\draw[fill=red] (3,-1) rectangle (4,-2);
\draw[fill=6x4x4color5] (4,-1) rectangle (5,-2);
\node at (4.5,-1.5) {5};
\draw[fill=6x4x4color6] (5,-1) rectangle (6,-2);
\node at (5.5,-1.5) {6};
\draw[fill=red] (0,-2) rectangle (1,-3);
\draw[fill=6x4x4color1] (1,-2) rectangle (2,-3);
\node at (1.5,-2.5) {1};
\draw[fill=6x4x4color8] (2,-2) rectangle (3,-3);
\node at (2.5,-2.5) {8};
\draw[fill=6x4x4color9] (3,-2) rectangle (4,-3);
\node at (3.5,-2.5) {9};
\draw[fill=6x4x4color10] (4,-2) rectangle (5,-3);
\node at (4.5,-2.5) {10};
\draw[fill=6x4x4color11] (5,-2) rectangle (6,-3);
\node at (5.5,-2.5) {11};
\draw[fill=6x4x4color1] (0,-3) rectangle (1,-4);
\node at (0.5,-3.5) {1};
\draw[fill=red] (1,-3) rectangle (2,-4);
\draw[fill=6x4x4color9] (2,-3) rectangle (3,-4);
\node at (2.5,-3.5) {9};
\draw[fill=6x4x4color12] (3,-3) rectangle (4,-4);
\node at (3.5,-3.5) {12};
\draw[fill=6x4x4color11] (4,-3) rectangle (5,-4);
\node at (4.5,-3.5) {11};
\draw[fill=red] (5,-3) rectangle (6,-4);
\end{scope}

\begin{scope}[xshift=21cm,yshift=0cm]
\draw[very thick] (0,0) rectangle (6,-4);
\foreach \b in {0,...,3}
 \foreach \c in {0,...,5}
  \draw (\c,\b*-1) rectangle (\c+1,(\b*-1-1);

\draw[fill=6x4x4color21] (0,-0) rectangle (1,-1);
\node at (0.5,-0.5) {21};
\draw[fill=6x4x4color20] (1,-0) rectangle (2,-1);
\node at (1.5,-0.5) {20};
\draw[fill=6x4x4color19] (2,-0) rectangle (3,-1);
\node at (2.5,-0.5) {19};
\draw[fill=6x4x4color18] (3,-0) rectangle (4,-1);
\node at (3.5,-0.5) {18};
\draw[fill=6x4x4color17] (4,-0) rectangle (5,-1);
\node at (4.5,-0.5) {17};
\draw[fill=red] (5,-0) rectangle (6,-1);
\draw[fill=red] (0,-1) rectangle (1,-2);
\draw[fill=6x4x4color3] (1,-1) rectangle (2,-2);
\node at (1.5,-1.5) {3};
\draw[fill=6x4x4color10] (2,-1) rectangle (3,-2);
\node at (2.5,-1.5) {10};
\draw[fill=6x4x4color15] (3,-1) rectangle (4,-2);
\node at (3.5,-1.5) {15};
\draw[fill=6x4x4color16] (4,-1) rectangle (5,-2);
\node at (4.5,-1.5) {16};
\draw[fill=6x4x4color17] (5,-1) rectangle (6,-2);
\node at (5.5,-1.5) {17};
\draw[fill=6x4x4color1] (0,-2) rectangle (1,-3);
\node at (0.5,-2.5) {1};
\draw[fill=red] (1,-2) rectangle (2,-3);
\draw[fill=6x4x4color9] (2,-2) rectangle (3,-3);
\node at (2.5,-2.5) {9};
\draw[fill=6x4x4color14] (3,-2) rectangle (4,-3);
\node at (3.5,-2.5) {14};
\draw[fill=6x4x4color15] (4,-2) rectangle (5,-3);
\node at (4.5,-2.5) {15};
\draw[fill=6x4x4color18] (5,-2) rectangle (6,-3);
\node at (5.5,-2.5) {18};
\draw[fill=6x4x4color2] (0,-3) rectangle (1,-4);
\node at (0.5,-3.5) {2};
\draw[fill=6x4x4color1] (1,-3) rectangle (2,-4);
\node at (1.5,-3.5) {1};
\draw[fill=red] (2,-3) rectangle (3,-4);
\draw[fill=6x4x4color13] (3,-3) rectangle (4,-4);
\node at (3.5,-3.5) {13};
\draw[fill=red] (4,-3) rectangle (5,-4);
\draw[fill=6x4x4color19] (5,-3) rectangle (6,-4);
\node at (5.5,-3.5) {19};
\end{scope}

\end{tikzpicture}
\end{center}

\begin{const}
\label{const:455}
$(4,5,5)$ is optimal.
\end{const}

\definecolor{5x5x4color1}{RGB}{255, 128, 128}
\definecolor{5x5x4color2}{RGB}{255, 134, 134}
\definecolor{5x5x4color3}{RGB}{255, 141, 141}
\definecolor{5x5x4color4}{RGB}{255, 148, 148}
\definecolor{5x5x4color5}{RGB}{255, 154, 154}
\definecolor{5x5x4color6}{RGB}{255, 161, 161}
\definecolor{5x5x4color7}{RGB}{255, 168, 168}
\definecolor{5x5x4color8}{RGB}{255, 174, 174}
\definecolor{5x5x4color9}{RGB}{255, 181, 181}
\definecolor{5x5x4color10}{RGB}{255, 188, 188}
\definecolor{5x5x4color11}{RGB}{255, 194, 194}
\definecolor{5x5x4color12}{RGB}{255, 201, 201}
\definecolor{5x5x4color13}{RGB}{255, 208, 208}
\definecolor{5x5x4color14}{RGB}{255, 214, 214}
\definecolor{5x5x4color15}{RGB}{255, 221, 221}
\definecolor{5x5x4color16}{RGB}{255, 228, 228}
\definecolor{5x5x4color17}{RGB}{255, 234, 234}
\definecolor{5x5x4color18}{RGB}{255, 241, 241}
\definecolor{5x5x4color19}{RGB}{255, 255, 255}

\begin{center}
\begin{tikzpicture}[scale=0.5]
\begin{scope}[xshift=0cm,yshift=0cm]
\draw[very thick] (0,0) rectangle (5,-5);
\foreach \b in {0,...,4}
 \foreach \c in {0,...,4}
  \draw (\c,\b*-1) rectangle (\c+1,(\b*-1-1);

\draw[fill=red] (0,-0) rectangle (1,-1);
\draw[fill=5x5x4color1] (1,-0) rectangle (2,-1);
\node at (1.5,-0.5) {1};
\draw[fill=red] (2,-0) rectangle (3,-1);
\draw[fill=5x5x4color13] (3,-0) rectangle (4,-1);
\node at (3.5,-0.5) {13};
\draw[fill=red] (4,-0) rectangle (5,-1);
\draw[fill=5x5x4color11] (0,-1) rectangle (1,-2);
\node at (0.5,-1.5) {11};
\draw[fill=red] (1,-1) rectangle (2,-2);
\draw[fill=5x5x4color9] (2,-1) rectangle (3,-2);
\node at (2.5,-1.5) {9};
\draw[fill=5x5x4color14] (3,-1) rectangle (4,-2);
\node at (3.5,-1.5) {14};
\draw[fill=5x5x4color15] (4,-1) rectangle (5,-2);
\node at (4.5,-1.5) {15};
\draw[fill=5x5x4color12] (0,-2) rectangle (1,-3);
\node at (0.5,-2.5) {12};
\draw[fill=5x5x4color1] (1,-2) rectangle (2,-3);
\node at (1.5,-2.5) {1};
\draw[fill=5x5x4color10] (2,-2) rectangle (3,-3);
\node at (2.5,-2.5) {10};
\draw[fill=5x5x4color15] (3,-2) rectangle (4,-3);
\node at (3.5,-2.5) {15};
\draw[fill=5x5x4color16] (4,-2) rectangle (5,-3);
\node at (4.5,-2.5) {16};
\draw[fill=5x5x4color13] (0,-3) rectangle (1,-4);
\node at (0.5,-3.5) {13};
\draw[fill=red] (1,-3) rectangle (2,-4);
\draw[fill=5x5x4color11] (2,-3) rectangle (3,-4);
\node at (2.5,-3.5) {11};
\draw[fill=5x5x4color16] (3,-3) rectangle (4,-4);
\node at (3.5,-3.5) {16};
\draw[fill=5x5x4color17] (4,-3) rectangle (5,-4);
\node at (4.5,-3.5) {17};
\draw[fill=red] (0,-4) rectangle (1,-5);
\draw[fill=5x5x4color9] (1,-4) rectangle (2,-5);
\node at (1.5,-4.5) {9};
\draw[fill=5x5x4color12] (2,-4) rectangle (3,-5);
\node at (2.5,-4.5) {12};
\draw[fill=5x5x4color17] (3,-4) rectangle (4,-5);
\node at (3.5,-4.5) {17};
\draw[fill=5x5x4color18] (4,-4) rectangle (5,-5);
\node at (4.5,-4.5) {18};
\end{scope}

\begin{scope}[xshift=6cm,yshift=0cm]
\draw[very thick] (0,0) rectangle (5,-5);
\foreach \b in {0,...,4}
 \foreach \c in {0,...,4}
  \draw (\c,\b*-1) rectangle (\c+1,(\b*-1-1);

\draw[fill=5x5x4color9] (0,-0) rectangle (1,-1);
\node at (0.5,-0.5) {9};
\draw[fill=5x5x4color8] (1,-0) rectangle (2,-1);
\node at (1.5,-0.5) {8};
\draw[fill=5x5x4color9] (2,-0) rectangle (3,-1);
\node at (2.5,-0.5) {9};
\draw[fill=5x5x4color12] (3,-0) rectangle (4,-1);
\node at (3.5,-0.5) {12};
\draw[fill=5x5x4color13] (4,-0) rectangle (5,-1);
\node at (4.5,-0.5) {13};
\draw[fill=5x5x4color10] (0,-1) rectangle (1,-2);
\node at (0.5,-1.5) {10};
\draw[fill=5x5x4color7] (1,-1) rectangle (2,-2);
\node at (1.5,-1.5) {7};
\draw[fill=5x5x4color8] (2,-1) rectangle (3,-2);
\node at (2.5,-1.5) {8};
\draw[fill=5x5x4color9] (3,-1) rectangle (4,-2);
\node at (3.5,-1.5) {9};
\draw[fill=5x5x4color14] (4,-1) rectangle (5,-2);
\node at (4.5,-1.5) {14};
\draw[fill=5x5x4color11] (0,-2) rectangle (1,-3);
\node at (0.5,-2.5) {11};
\draw[fill=red] (1,-2) rectangle (2,-3);
\draw[fill=5x5x4color1] (2,-2) rectangle (3,-3);
\node at (2.5,-2.5) {1};
\draw[fill=red] (3,-2) rectangle (4,-3);
\draw[fill=5x5x4color15] (4,-2) rectangle (5,-3);
\node at (4.5,-2.5) {15};
\draw[fill=5x5x4color14] (0,-3) rectangle (1,-4);
\node at (0.5,-3.5) {14};
\draw[fill=5x5x4color1] (1,-3) rectangle (2,-4);
\node at (1.5,-3.5) {1};
\draw[fill=red] (2,-3) rectangle (3,-4);
\draw[fill=5x5x4color1] (3,-3) rectangle (4,-4);
\node at (3.5,-3.5) {1};
\draw[fill=5x5x4color16] (4,-3) rectangle (5,-4);
\node at (4.5,-3.5) {16};
\draw[fill=5x5x4color15] (0,-4) rectangle (1,-5);
\node at (0.5,-4.5) {15};
\draw[fill=5x5x4color8] (1,-4) rectangle (2,-5);
\node at (1.5,-4.5) {8};
\draw[fill=5x5x4color1] (2,-4) rectangle (3,-5);
\node at (2.5,-4.5) {1};
\draw[fill=red] (3,-4) rectangle (4,-5);
\draw[fill=5x5x4color17] (4,-4) rectangle (5,-5);
\node at (4.5,-4.5) {17};
\end{scope}

\begin{scope}[xshift=12cm,yshift=0cm]
\draw[very thick] (0,0) rectangle (5,-5);
\foreach \b in {0,...,4}
 \foreach \c in {0,...,4}
  \draw (\c,\b*-1) rectangle (\c+1,(\b*-1-1);

\draw[fill=red] (0,-0) rectangle (1,-1);
\draw[fill=5x5x4color7] (1,-0) rectangle (2,-1);
\node at (1.5,-0.5) {7};
\draw[fill=5x5x4color10] (2,-0) rectangle (3,-1);
\node at (2.5,-0.5) {10};
\draw[fill=5x5x4color11] (3,-0) rectangle (4,-1);
\node at (3.5,-0.5) {11};
\draw[fill=red] (4,-0) rectangle (5,-1);
\draw[fill=5x5x4color1] (0,-1) rectangle (1,-2);
\node at (0.5,-1.5) {1};
\draw[fill=5x5x4color6] (1,-1) rectangle (2,-2);
\node at (1.5,-1.5) {6};
\draw[fill=5x5x4color7] (2,-1) rectangle (3,-2);
\node at (2.5,-1.5) {7};
\draw[fill=5x5x4color8] (3,-1) rectangle (4,-2);
\node at (3.5,-1.5) {8};
\draw[fill=5x5x4color1] (4,-1) rectangle (5,-2);
\node at (4.5,-1.5) {1};
\draw[fill=red] (0,-2) rectangle (1,-3);
\draw[fill=5x5x4color5] (1,-2) rectangle (2,-3);
\node at (1.5,-2.5) {5};
\draw[fill=5x5x4color4] (2,-2) rectangle (3,-3);
\node at (2.5,-2.5) {4};
\draw[fill=5x5x4color3] (3,-2) rectangle (4,-3);
\node at (3.5,-2.5) {3};
\draw[fill=red] (4,-2) rectangle (5,-3);
\draw[fill=5x5x4color15] (0,-3) rectangle (1,-4);
\node at (0.5,-3.5) {15};
\draw[fill=5x5x4color6] (1,-3) rectangle (2,-4);
\node at (1.5,-3.5) {6};
\draw[fill=5x5x4color3] (2,-3) rectangle (3,-4);
\node at (2.5,-3.5) {3};
\draw[fill=5x5x4color2] (3,-3) rectangle (4,-4);
\node at (3.5,-3.5) {2};
\draw[fill=5x5x4color1] (4,-3) rectangle (5,-4);
\node at (4.5,-3.5) {1};
\draw[fill=5x5x4color16] (0,-4) rectangle (1,-5);
\node at (0.5,-4.5) {16};
\draw[fill=5x5x4color7] (1,-4) rectangle (2,-5);
\node at (1.5,-4.5) {7};
\draw[fill=red] (2,-4) rectangle (3,-5);
\draw[fill=5x5x4color1] (3,-4) rectangle (4,-5);
\node at (3.5,-4.5) {1};
\draw[fill=red] (4,-4) rectangle (5,-5);
\end{scope}

\begin{scope}[xshift=18cm,yshift=0cm]
\draw[very thick] (0,0) rectangle (5,-5);
\foreach \b in {0,...,4}
 \foreach \c in {0,...,4}
  \draw (\c,\b*-1) rectangle (\c+1,(\b*-1-1);

\draw[fill=5x5x4color1] (0,-0) rectangle (1,-1);
\node at (0.5,-0.5) {1};
\draw[fill=red] (1,-0) rectangle (2,-1);
\draw[fill=5x5x4color11] (2,-0) rectangle (3,-1);
\node at (2.5,-0.5) {11};
\draw[fill=5x5x4color12] (3,-0) rectangle (4,-1);
\node at (3.5,-0.5) {12};
\draw[fill=5x5x4color13] (4,-0) rectangle (5,-1);
\node at (4.5,-0.5) {13};
\draw[fill=red] (0,-1) rectangle (1,-2);
\draw[fill=5x5x4color1] (1,-1) rectangle (2,-2);
\node at (1.5,-1.5) {1};
\draw[fill=red] (2,-1) rectangle (3,-2);
\draw[fill=5x5x4color9] (3,-1) rectangle (4,-2);
\node at (3.5,-1.5) {9};
\draw[fill=red] (4,-1) rectangle (5,-2);
\draw[fill=5x5x4color13] (0,-2) rectangle (1,-3);
\node at (0.5,-2.5) {13};
\draw[fill=5x5x4color12] (1,-2) rectangle (2,-3);
\node at (1.5,-2.5) {12};
\draw[fill=5x5x4color11] (2,-2) rectangle (3,-3);
\node at (2.5,-2.5) {11};
\draw[fill=5x5x4color10] (3,-2) rectangle (4,-3);
\node at (3.5,-2.5) {10};
\draw[fill=5x5x4color1] (4,-2) rectangle (5,-3);
\node at (4.5,-2.5) {1};
\draw[fill=5x5x4color16] (0,-3) rectangle (1,-4);
\node at (0.5,-3.5) {16};
\draw[fill=5x5x4color13] (1,-3) rectangle (2,-4);
\node at (1.5,-3.5) {13};
\draw[fill=5x5x4color12] (2,-3) rectangle (3,-4);
\node at (2.5,-3.5) {12};
\draw[fill=5x5x4color11] (3,-3) rectangle (4,-4);
\node at (3.5,-3.5) {11};
\draw[fill=red] (4,-3) rectangle (5,-4);
\draw[fill=5x5x4color17] (0,-4) rectangle (1,-5);
\node at (0.5,-4.5) {17};
\draw[fill=red] (1,-4) rectangle (2,-5);
\draw[fill=5x5x4color13] (2,-4) rectangle (3,-5);
\node at (2.5,-4.5) {13};
\draw[fill=5x5x4color14] (3,-4) rectangle (4,-5);
\node at (3.5,-4.5) {14};
\draw[fill=5x5x4color15] (4,-4) rectangle (5,-5);
\node at (4.5,-4.5) {15};
\end{scope}

\end{tikzpicture}
\end{center}

\begin{const}
\label{const:456}
$(4,5,6)$ is optimal.
\end{const}

\definecolor{6x5x4color1}{RGB}{255, 128, 128}
\definecolor{6x5x4color2}{RGB}{255, 135, 135}
\definecolor{6x5x4color3}{RGB}{255, 142, 142}
\definecolor{6x5x4color4}{RGB}{255, 149, 149}
\definecolor{6x5x4color5}{RGB}{255, 156, 156}
\definecolor{6x5x4color6}{RGB}{255, 163, 163}
\definecolor{6x5x4color7}{RGB}{255, 170, 170}
\definecolor{6x5x4color8}{RGB}{255, 177, 177}
\definecolor{6x5x4color9}{RGB}{255, 184, 184}
\definecolor{6x5x4color10}{RGB}{255, 191, 191}
\definecolor{6x5x4color11}{RGB}{255, 198, 198}
\definecolor{6x5x4color12}{RGB}{255, 205, 205}
\definecolor{6x5x4color13}{RGB}{255, 212, 212}
\definecolor{6x5x4color14}{RGB}{255, 219, 219}
\definecolor{6x5x4color15}{RGB}{255, 226, 226}
\definecolor{6x5x4color16}{RGB}{255, 233, 233}
\definecolor{6x5x4color17}{RGB}{255, 240, 240}
\definecolor{6x5x4color18}{RGB}{255, 255, 255}

\begin{center}
\begin{tikzpicture}[scale=0.5]
\begin{scope}[xshift=0cm,yshift=0cm]
\draw[very thick] (0,0) rectangle (6,-5);
\foreach \b in {0,...,4}
 \foreach \c in {0,...,5}
  \draw (\c,\b*-1) rectangle (\c+1,(\b*-1-1);

\draw[fill=6x5x4color16] (0,-0) rectangle (1,-1);
\node at (0.5,-0.5) {16};
\draw[fill=6x5x4color15] (1,-0) rectangle (2,-1);
\node at (1.5,-0.5) {15};
\draw[fill=red] (2,-0) rectangle (3,-1);
\draw[fill=6x5x4color1] (3,-0) rectangle (4,-1);
\node at (3.5,-0.5) {1};
\draw[fill=red] (4,-0) rectangle (5,-1);
\draw[fill=6x5x4color13] (5,-0) rectangle (6,-1);
\node at (5.5,-0.5) {13};
\draw[fill=6x5x4color15] (0,-1) rectangle (1,-2);
\node at (0.5,-1.5) {15};
\draw[fill=red] (1,-1) rectangle (2,-2);
\draw[fill=6x5x4color1] (2,-1) rectangle (3,-2);
\node at (2.5,-1.5) {1};
\draw[fill=red] (3,-1) rectangle (4,-2);
\draw[fill=6x5x4color1] (4,-1) rectangle (5,-2);
\node at (4.5,-1.5) {1};
\draw[fill=red] (5,-1) rectangle (6,-2);
\draw[fill=6x5x4color14] (0,-2) rectangle (1,-3);
\node at (0.5,-2.5) {14};
\draw[fill=6x5x4color11] (1,-2) rectangle (2,-3);
\node at (1.5,-2.5) {11};
\draw[fill=6x5x4color8] (2,-2) rectangle (3,-3);
\node at (2.5,-2.5) {8};
\draw[fill=6x5x4color1] (3,-2) rectangle (4,-3);
\node at (3.5,-2.5) {1};
\draw[fill=6x5x4color2] (4,-2) rectangle (5,-3);
\node at (4.5,-2.5) {2};
\draw[fill=6x5x4color3] (5,-2) rectangle (6,-3);
\node at (5.5,-2.5) {3};
\draw[fill=6x5x4color13] (0,-3) rectangle (1,-4);
\node at (0.5,-3.5) {13};
\draw[fill=6x5x4color12] (1,-3) rectangle (2,-4);
\node at (1.5,-3.5) {12};
\draw[fill=6x5x4color9] (2,-3) rectangle (3,-4);
\node at (2.5,-3.5) {9};
\draw[fill=red] (3,-3) rectangle (4,-4);
\draw[fill=6x5x4color3] (4,-3) rectangle (5,-4);
\node at (4.5,-3.5) {3};
\draw[fill=6x5x4color16] (5,-3) rectangle (6,-4);
\node at (5.5,-3.5) {16};
\draw[fill=red] (0,-4) rectangle (1,-5);
\draw[fill=6x5x4color13] (1,-4) rectangle (2,-5);
\node at (1.5,-4.5) {13};
\draw[fill=6x5x4color14] (2,-4) rectangle (3,-5);
\node at (2.5,-4.5) {14};
\draw[fill=6x5x4color15] (3,-4) rectangle (4,-5);
\node at (3.5,-4.5) {15};
\draw[fill=6x5x4color16] (4,-4) rectangle (5,-5);
\node at (4.5,-4.5) {16};
\draw[fill=6x5x4color17] (5,-4) rectangle (6,-5);
\node at (5.5,-4.5) {17};
\end{scope}

\begin{scope}[xshift=7cm,yshift=0cm]
\draw[very thick] (0,0) rectangle (6,-5);
\foreach \b in {0,...,4}
 \foreach \c in {0,...,5}
  \draw (\c,\b*-1) rectangle (\c+1,(\b*-1-1);

\draw[fill=6x5x4color15] (0,-0) rectangle (1,-1);
\node at (0.5,-0.5) {15};
\draw[fill=6x5x4color14] (1,-0) rectangle (2,-1);
\node at (1.5,-0.5) {14};
\draw[fill=6x5x4color7] (2,-0) rectangle (3,-1);
\node at (2.5,-0.5) {7};
\draw[fill=6x5x4color8] (3,-0) rectangle (4,-1);
\node at (3.5,-0.5) {8};
\draw[fill=6x5x4color9] (4,-0) rectangle (5,-1);
\node at (4.5,-0.5) {9};
\draw[fill=6x5x4color12] (5,-0) rectangle (6,-1);
\node at (5.5,-0.5) {12};
\draw[fill=6x5x4color14] (0,-1) rectangle (1,-2);
\node at (0.5,-1.5) {14};
\draw[fill=6x5x4color11] (1,-1) rectangle (2,-2);
\node at (1.5,-1.5) {11};
\draw[fill=6x5x4color6] (2,-1) rectangle (3,-2);
\node at (2.5,-1.5) {6};
\draw[fill=6x5x4color3] (3,-1) rectangle (4,-2);
\node at (3.5,-1.5) {3};
\draw[fill=6x5x4color2] (4,-1) rectangle (5,-2);
\node at (4.5,-1.5) {2};
\draw[fill=6x5x4color1] (5,-1) rectangle (6,-2);
\node at (5.5,-1.5) {1};
\draw[fill=6x5x4color11] (0,-2) rectangle (1,-3);
\node at (0.5,-2.5) {11};
\draw[fill=6x5x4color10] (1,-2) rectangle (2,-3);
\node at (1.5,-2.5) {10};
\draw[fill=6x5x4color7] (2,-2) rectangle (3,-3);
\node at (2.5,-2.5) {7};
\draw[fill=red] (3,-2) rectangle (4,-3);
\draw[fill=6x5x4color1] (4,-2) rectangle (5,-3);
\node at (4.5,-2.5) {1};
\draw[fill=red] (5,-2) rectangle (6,-3);
\draw[fill=6x5x4color10] (0,-3) rectangle (1,-4);
\node at (0.5,-3.5) {10};
\draw[fill=6x5x4color9] (1,-3) rectangle (2,-4);
\node at (1.5,-3.5) {9};
\draw[fill=6x5x4color8] (2,-3) rectangle (3,-4);
\node at (2.5,-3.5) {8};
\draw[fill=6x5x4color1] (3,-3) rectangle (4,-4);
\node at (3.5,-3.5) {1};
\draw[fill=6x5x4color2] (4,-3) rectangle (5,-4);
\node at (4.5,-3.5) {2};
\draw[fill=6x5x4color15] (5,-3) rectangle (6,-4);
\node at (5.5,-3.5) {15};
\draw[fill=6x5x4color1] (0,-4) rectangle (1,-5);
\node at (0.5,-4.5) {1};
\draw[fill=red] (1,-4) rectangle (2,-5);
\draw[fill=6x5x4color9] (2,-4) rectangle (3,-5);
\node at (2.5,-4.5) {9};
\draw[fill=6x5x4color10] (3,-4) rectangle (4,-5);
\node at (3.5,-4.5) {10};
\draw[fill=6x5x4color11] (4,-4) rectangle (5,-5);
\node at (4.5,-4.5) {11};
\draw[fill=6x5x4color16] (5,-4) rectangle (6,-5);
\node at (5.5,-4.5) {16};
\end{scope}

\begin{scope}[xshift=14cm,yshift=0cm]
\draw[very thick] (0,0) rectangle (6,-5);
\foreach \b in {0,...,4}
 \foreach \c in {0,...,5}
  \draw (\c,\b*-1) rectangle (\c+1,(\b*-1-1);

\draw[fill=red] (0,-0) rectangle (1,-1);
\draw[fill=6x5x4color13] (1,-0) rectangle (2,-1);
\node at (1.5,-0.5) {13};
\draw[fill=red] (2,-0) rectangle (3,-1);
\draw[fill=6x5x4color9] (3,-0) rectangle (4,-1);
\node at (3.5,-0.5) {9};
\draw[fill=6x5x4color10] (4,-0) rectangle (5,-1);
\node at (4.5,-0.5) {10};
\draw[fill=6x5x4color11] (5,-0) rectangle (6,-1);
\node at (5.5,-0.5) {11};
\draw[fill=6x5x4color13] (0,-1) rectangle (1,-2);
\node at (0.5,-1.5) {13};
\draw[fill=6x5x4color12] (1,-1) rectangle (2,-2);
\node at (1.5,-1.5) {12};
\draw[fill=6x5x4color5] (2,-1) rectangle (3,-2);
\node at (2.5,-1.5) {5};
\draw[fill=6x5x4color4] (3,-1) rectangle (4,-2);
\node at (3.5,-1.5) {4};
\draw[fill=6x5x4color3] (4,-1) rectangle (5,-2);
\node at (4.5,-1.5) {3};
\draw[fill=red] (5,-1) rectangle (6,-2);
\draw[fill=red] (0,-2) rectangle (1,-3);
\draw[fill=6x5x4color7] (1,-2) rectangle (2,-3);
\node at (1.5,-2.5) {7};
\draw[fill=6x5x4color6] (2,-2) rectangle (3,-3);
\node at (2.5,-2.5) {6};
\draw[fill=6x5x4color1] (3,-2) rectangle (4,-3);
\node at (3.5,-2.5) {1};
\draw[fill=red] (4,-2) rectangle (5,-3);
\draw[fill=6x5x4color1] (5,-2) rectangle (6,-3);
\node at (5.5,-2.5) {1};
\draw[fill=6x5x4color1] (0,-3) rectangle (1,-4);
\node at (0.5,-3.5) {1};
\draw[fill=6x5x4color2] (1,-3) rectangle (2,-4);
\node at (1.5,-3.5) {2};
\draw[fill=6x5x4color1] (2,-3) rectangle (3,-4);
\node at (2.5,-3.5) {1};
\draw[fill=red] (3,-3) rectangle (4,-4);
\draw[fill=6x5x4color1] (4,-3) rectangle (5,-4);
\node at (4.5,-3.5) {1};
\draw[fill=6x5x4color14] (5,-3) rectangle (6,-4);
\node at (5.5,-3.5) {14};
\draw[fill=red] (0,-4) rectangle (1,-5);
\draw[fill=6x5x4color1] (1,-4) rectangle (2,-5);
\node at (1.5,-4.5) {1};
\draw[fill=red] (2,-4) rectangle (3,-5);
\draw[fill=6x5x4color1] (3,-4) rectangle (4,-5);
\node at (3.5,-4.5) {1};
\draw[fill=6x5x4color2] (4,-4) rectangle (5,-5);
\node at (4.5,-4.5) {2};
\draw[fill=6x5x4color15] (5,-4) rectangle (6,-5);
\node at (5.5,-4.5) {15};
\end{scope}

\begin{scope}[xshift=21cm,yshift=0cm]
\draw[very thick] (0,0) rectangle (6,-5);
\foreach \b in {0,...,4}
 \foreach \c in {0,...,5}
  \draw (\c,\b*-1) rectangle (\c+1,(\b*-1-1);

\draw[fill=6x5x4color15] (0,-0) rectangle (1,-1);
\node at (0.5,-0.5) {15};
\draw[fill=6x5x4color14] (1,-0) rectangle (2,-1);
\node at (1.5,-0.5) {14};
\draw[fill=6x5x4color13] (2,-0) rectangle (3,-1);
\node at (2.5,-0.5) {13};
\draw[fill=6x5x4color12] (3,-0) rectangle (4,-1);
\node at (3.5,-0.5) {12};
\draw[fill=6x5x4color11] (4,-0) rectangle (5,-1);
\node at (4.5,-0.5) {11};
\draw[fill=red] (5,-0) rectangle (6,-1);
\draw[fill=6x5x4color14] (0,-1) rectangle (1,-2);
\node at (0.5,-1.5) {14};
\draw[fill=6x5x4color13] (1,-1) rectangle (2,-2);
\node at (1.5,-1.5) {13};
\draw[fill=red] (2,-1) rectangle (3,-2);
\draw[fill=6x5x4color9] (3,-1) rectangle (4,-2);
\node at (3.5,-1.5) {9};
\draw[fill=6x5x4color10] (4,-1) rectangle (5,-2);
\node at (4.5,-1.5) {10};
\draw[fill=6x5x4color11] (5,-1) rectangle (6,-2);
\node at (5.5,-1.5) {11};
\draw[fill=6x5x4color9] (0,-2) rectangle (1,-3);
\node at (0.5,-2.5) {9};
\draw[fill=6x5x4color8] (1,-2) rectangle (2,-3);
\node at (1.5,-2.5) {8};
\draw[fill=6x5x4color7] (2,-2) rectangle (3,-3);
\node at (2.5,-2.5) {7};
\draw[fill=6x5x4color8] (3,-2) rectangle (4,-3);
\node at (3.5,-2.5) {8};
\draw[fill=6x5x4color9] (4,-2) rectangle (5,-3);
\node at (4.5,-2.5) {9};
\draw[fill=6x5x4color12] (5,-2) rectangle (6,-3);
\node at (5.5,-2.5) {12};
\draw[fill=red] (0,-3) rectangle (1,-4);
\draw[fill=6x5x4color3] (1,-3) rectangle (2,-4);
\node at (1.5,-3.5) {3};
\draw[fill=red] (2,-3) rectangle (3,-4);
\draw[fill=6x5x4color1] (3,-3) rectangle (4,-4);
\node at (3.5,-3.5) {1};
\draw[fill=red] (4,-3) rectangle (5,-4);
\draw[fill=6x5x4color13] (5,-3) rectangle (6,-4);
\node at (5.5,-3.5) {13};
\draw[fill=6x5x4color5] (0,-4) rectangle (1,-5);
\node at (0.5,-4.5) {5};
\draw[fill=6x5x4color4] (1,-4) rectangle (2,-5);
\node at (1.5,-4.5) {4};
\draw[fill=6x5x4color1] (2,-4) rectangle (3,-5);
\node at (2.5,-4.5) {1};
\draw[fill=red] (3,-4) rectangle (4,-5);
\draw[fill=6x5x4color1] (4,-4) rectangle (5,-5);
\node at (4.5,-4.5) {1};
\draw[fill=red] (5,-4) rectangle (6,-5);
\end{scope}

\end{tikzpicture}
\end{center}

\begin{const}
\label{const:457}
$(4,5,7)$ is optimal.
\end{const}

\input{4-5-7-tikz.txt}

\begin{const}
\label{const:467}
$(4,6,7)$ is optimal.
\end{const}

\input{4-6-7-tikz.txt}

\begin{const}
\label{const:448}
$(4,4,8)$ is optimal.
\end{const}

\input{4-4-8-tikz.txt}

\end{document}